\documentclass[10pt,a4paper,reqno]{amsart}            

\usepackage[english]{babel}
\usepackage{amssymb,amsmath,amsthm}
\usepackage{mathrsfs}

\usepackage{enumitem}
\setlist[enumerate]{leftmargin=*}

\usepackage{hyperref}

\renewcommand\mathcal{\mathscr}

\theoremstyle{plain}
\newtheorem{theorem}{Theorem}[section]
\newtheorem{lemma}[theorem]{Lemma}
\newtheorem{corollary}[theorem]{Corollary}
\newtheorem{proposition}[theorem]{Proposition}

\theoremstyle{remark}
\newtheorem{remark}[theorem]{Remark}

\theoremstyle{definition}
\newtheorem{definition}[theorem]{Definition}

\numberwithin{equation}{section}

\newcount\quantno
\everydisplay\expandafter{\the\everydisplay\quantno=0}
\everycr\expandafter{\the\everycr\quantno=0}
\newcommand\quant{\advance\quantno by1
                      \ifnum\quantno=1\qquad\else\quad\fi\forall }

\newcommand{\loc}{\mathrm{loc}}

\newcommand{\compl}{\mathsf{c}}

\renewcommand\Re{\operatorname{\mathrm{Re}}}
\renewcommand\Im{\operatorname{\mathrm{Im}}}

\newcommand\ir{\int_\BR}
\newcommand\ioty{\int_0^{\infty}}
\newcommand\wrt{\,\mathrm{d}}
\newcommand\dtt[1]{\,\frac{\mathrm{d} #1}{ #1}}

\newcommand\e{\mathrm{e}}

\newcommand\bc{\mathbf{c}}

\newcommand\wh{\widehat}
\newcommand\wt{\widetilde}

\newcommand{\defeq}{\mathrel{:=}}

\renewcommand\mod[1]{\vert{#1}\vert}
\newcommand\bigmod[1]{\bigl\vert{#1}\bigr|}

\newcommand\norm[2]{{\Vert{#1}\Vert_{#2}}}

\newcommand\bignorm[2]{\left.{\bigl\Vert{#1}\bigr\Vert_{#2}}\right.}

\newcommand\bigopnorm[2]{\big|\!\big|\!\big| {#1} \big|\!\big|\!\big|_{#2}}

\newcommand\bS{\mathbf{S}} 
\newcommand\bT{\mathbf{T}} 

\newcommand\BC{\mathbb{C}}
\newcommand\BN{\mathbb{N}}
\newcommand\BR{\mathbb{R}}
\newcommand\BZ{\mathbb{Z}}

\newcommand\cE{\mathcal{E}}  
\newcommand\cH{\mathcal{H}}  
\newcommand\frh{\mathfrak{h}} 
\newcommand\cI{\mathcal{I}}  
\newcommand\cL{\mathcal{L}}  
\newcommand\cM{\mathcal{M}}  
\newcommand\cP{\mathcal{P}}  
\newcommand\cR{\mathcal{R}}  
\newcommand\cT{\mathcal{T}}    
\newcommand\cU{\mathcal{U}}  
\newcommand\cX{\mathcal{X}}  
\newcommand\fX{\mathfrak{X}}

\newcommand\al{\alpha}

\newcommand\ga{\gamma}    \newcommand\Ga{\Gamma}
\newcommand\de{\delta}
  
\newcommand\vep{\varepsilon}

\newcommand\la{\lambda}   
    
\newcommand\si{\sigma}    

\newcommand\vp{\varphi}

\newcommand\maxP{\cP_*}
\newcommand\maxPc{\cP_*^c}
\newcommand\maxPloc{\cP_*^{\mathrm{loc}}}
\newcommand\maxH{\cH_*}
\newcommand\maxHc{\cH_*^c}
\newcommand\maxHloc{\cH_*^{\mathrm{loc}}}

\newcommand\lu[1]{L^1(#1)}

\newcommand\hu[1]{H^1(#1)}
\newcommand\huR[1]{H_{\cR}^1(#1)}
\newcommand\huP[1]{H_{\cP}^1(#1)}
\newcommand\huPc[1]{H_{\cP,c}^1(#1)}
\newcommand\huPu[1]{H_{\cP,1}^1(#1)}
\newcommand\huH[1]{H_{\cH}^1(#1)}
\newcommand\huHc[1]{H_{\cH,c}^1(#1)}

\newcommand\ghu[1]{{\frh}^1(#1)}
\newcommand\ghuRtau[1]{{\frh}_{\cR_\tau}^1(#1)}
\newcommand\ghuRtaup[1]{{\frh}_{\cR_{\tau'}}^1(#1)}

\newcommand\gXum[1]{\fX^{1/2}(#1)}
\newcommand\gXu[1]{{\fX}^1(#1)}
\newcommand\gXga[1]{\fX^{\ga}(#1)}

\DeclareSymbolFont{EUEX}{U}{euex}{m}{n}

\DeclareSymbolFont{euexlargesymbols}{U}{euex}{m}{n}
\DeclareMathSymbol{\intop}{\mathop}{euexlargesymbols}{"52}
     \def\int{\intop\nolimits}

\DeclareSymbolFont{euexsymbols}     {U}{euex}{m}{n}
\DeclareMathSymbol{\smallint}{\mathop}{euexsymbols}{"52}

\begin{document}

\title[Comparison of Hardy type spaces]{Inclusions and noninclusions of Hardy type spaces on certain nondoubling manifolds}

\subjclass[2020]{42B20, 42B30, 42B35, 58C99} 

\keywords{Hardy space, noncompact manifold, exponential growth, maximal function, heat semigroup, Poisson semigroup, Riesz transform.}

\thanks{The first-named author gratefully acknowledges the support of Compagnia di San Paolo.
The first-, third- and fourth-named authors are members of the Gruppo Nazionale per l'Analisi Mate\-matica, 
la Probabilit\`a e le loro Applicazioni (GNAMPA) of the Istituto 
Nazionale di Alta Mate\-matica (INdAM)}

\author[A. Martini]{Alessio Martini}
\address[Alessio Martini]{Dipartimento di Scienze Matematiche ``Giuseppe Luigi Lagrange'', Dipartimento di Eccellenza 2018-2022 \\
Politecnico di Torino\\ corso Duca degli Abruzzi 24\\ 10129 Torino\\ Italy}
\email{alessio.martini@polito.it}

\author[S. Meda]{Stefano Meda}
\address[Stefano Meda]{Dipartimento di Matematica e Applicazioni \\ Universit\`a di Milano-Bicocca\\
via R.~Cozzi 53\\ I-20125 Milano\\ Italy}
\email{stefano.meda@unimib.it}

\author[M. Vallarino]{Maria Vallarino}
\address[Maria Vallarino]{Dipartimento di Scienze Matematiche ``Giuseppe Luigi Lagrange'', Dipartimento di Eccellenza 2018-2022 \\
Politecnico di Torino\\ corso Duca degli Abruzzi 24\\ 10129 Torino\\ Italy}
\email{maria.vallarino@polito.it}

\author[G. Veronelli]{Giona Veronelli}
\address[Giona Veronelli]{Dipartimento di Matematica e Applicazioni \\ Universit\`a di Milano-Bicocca\\
via R.~Cozzi 53\\ I-20125 Milano\\ Italy}
\email{giona.veronelli@unimib.it}

\begin{abstract}
	In this paper we establish inclusions and noninclusions between various Hardy type spaces on noncompact Riemannian manifolds $M$ with
	Ricci curvature bounded from below, positive injectivity radius and spectral gap.  

	Our first main result states that, if $\cL$ is the positive Laplace--Beltrami operator on $M$, then the Riesz--Hardy space $\huR{M}$ is the isomorphic image of the Goldberg type space $\ghu{M}$ via the map $\cL^{1/2} (\cI + \cL)^{-1/2}$,
	a fact that is false in $\BR^n$. 
	Specifically, $\huR{M}$ agrees with the Hardy type space $\gXum{M}$ recently introduced by the the first three authors;
	as a consequence, we prove that $\huR{M}$ does not admit an atomic characterisation.  

	Noninclusions are mostly proved in the special case where the manifold is a Damek--Ricci space $S$. 
	Our second main result states that $\huR{S}$, the heat Hardy space $\huH{S}$ and the Poisson--Hardy space $\huP{S}$ 
	are  mutually distinct spaces, a fact which is in sharp contrast to the Euclidean case, where these three spaces agree.  
\end{abstract}

\maketitle

\section{Introduction}
\label{s: Introduction}
The purpose of this paper is to prove a number of results 
concerning Hardy type spaces on a certain class of nondoubling Riemannian manifolds.  Our results illustrate that the
scenario of Hardy spaces on such class of manifolds may differ considerably from that we are familiar with on Euclidean spaces.  

Hardy spaces have become a landmark in the panorama of Euclidean Harmonic Analysis (in several variables) after the 1972 seminal paper of C.~Fefferman and E.~M.\ Stein \cite{FS}.
It is virtually impossible in a research paper to give appropriate credit to all the mathematicians who have contributed to develop
the theory of Hardy spaces in an impressive variety of settings besides $\BR^n$.  Some pointers on the existing literature may be found in the
introduction of \cite{MaMV1}, to which we refer the interested reader.  

One of the key features of $\hu{\BR^n}$ is its ``flexibility'', which is a consequence of its many different characterisations: these
include the atomic and the maximal characterisations and the characterisation via Riesz transforms.
This makes it possible to choose the most useful characterisation of $\hu{\BR^n}$ in connection with a specific application one has in mind. 

Suppose that $M$ is a complete connected noncompact Riemannian manifold.  Denote by $\cL$ the (nonnegative) Laplace--Beltrami operator on $M$, 
by $\nabla$ the covariant derivative.  Consider the \emph{Riesz transform} $\cR \defeq \nabla \cL^{-1/2}$ and the following Hardy type spaces,
endowed with their natural norms:
\begin{enumerate}[label=(\roman*)]
	\item
		the \emph{Riesz--Hardy space} $\huR{M}$, defined by 
		\begin{equation}\label{f: rieszhardy}
		\huR{M}
		\defeq \{f \in \lu{M}: \mod{\cR f} \in \lu{M}\};
		\end{equation}
	\item
		the \emph{heat maximal Hardy space} $\huH{M}$, associated to the heat semigroup $\{\cH_t\defeq\e^{-t\cL}: t\geq 0\}$, 
		defined by 
		\begin{equation}\label{f: heathardy}
		\huH{M}
		\defeq \big\{f \in \lu{M}: \maxH f \in \lu{M} \big\},
		\end{equation}
		where $\maxH f \defeq \sup_{t > 0} \, \bigmod{\cH_t f}$; 
	\item
		the \emph{Poisson maximal Hardy space} $\huP{M}$, associated to the Poisson semigroup $\{\cP_t\defeq\e^{-t\cL^{1/2}}: t\geq 0\}$, 
		defined by 
		\begin{equation}\label{f: heatPoisson}
		\huP{M}
		\defeq \big\{f \in \lu{M}: \maxP f \in \lu{M} \big\},
		\end{equation}
		where $\maxP f \defeq \sup_{t > 0} \, \bigmod{\cP_t f}$. 
\end{enumerate}
As mentioned above (see \cite{FS})
\begin{equation} \label{f: FS equality}
\huR{\BR^n} = \huH{\BR^n} = \huP{\BR^n}.  
\end{equation}  
It is natural to speculate whether a similar equality holds in wider generality.  A discussion of this interesting problem may be found in
the introduction of \cite{MaMV1}, to which the reader is referred for further information.  
Here we content ourselves to mention that, as a consequence of the efforts of various authors \cite{AMR,DKKP,HLMMY}, 
\[
\huR{M} \supseteq \huH{M} = \huP{M}  
\]
in the case where $M$ is a \textbf{doubling} Riemannian manifold with Ricci curvature bounded from below and positive injectivity radius.  
The first inclusion is a trivial consequence of the boundedness of the Riesz transform $\cR$ from, say, $\huH{M}$ to $\lu{M}$.  To the best of our
knowledge, it is unknown whether, under the assumptions above, $\huR{M} = \huH{M}$.

We emphasize the fact that there are only a few (very specific) examples in the literature, besides $\BR^n$, 
in which the Riesz--Hardy space $\huR{M}$ is characterised in whatsoever form.  
For more on this, see the introduction of \cite{MaMV1} and the references therein.  

In this paper we are interested exclusively in analogues of the space $\hu{\BR^n}$ on manifolds in the class $\cM$
of all complete noncompact connected Riemannian manifolds $M$ satisfying the following:
\begin{enumerate}[label=(\roman*)]
\item
the injectivity radius of $M$ is positive;
\item
the Ricci tensor of $M$ is bounded from below; 
\item
$M$ has spectral gap, i.e., the bottom $b$ of the $L^2$-spectrum of the Laplace--Beltrami operator $\cL$ is strictly positive. 
\end{enumerate}
Riemannian manifolds in $\cM$ are nondoubling metric measure spaces.  Notice that~$\cM$ includes all symmetric spaces 
of the noncompact type and Damek--Ricci spaces, as well as all the simply connected complete Riemannian manifolds with negative pinched sectional
curvature such as, for instance, the universal coverings of compact manifolds of strictly negative curvature.
Note that $\cM$ is precisely the class of manifolds considered in \cite{MaMV1}.  
We refer to \cite[pp.\ 2064--2065]{MaMV1} for some important analytic and geometric consequences of the assumptions above.  

Many variants of Hardy type spaces have been considered on (subclasses of) the class $\cM$.  Each of them is taylored
to obtain endpoint estimates for certain classes of operators.   Without any pretence of exhaustiveness, we mention
\cite{A1,Lo,I,MM,CMM1,CMM2,MMV1,MMV2,MMV3,MMV4,MMV5,MaMV1,MaMV2,MVe,T}, and refer the reader
to the introductive sections of these papers for more on Hardy type spaces and their role in obtaining sharp estimates
for a variety of operators, and for pointers to the vast literature on the subject.

One of the main results of this paper, Corollary~\ref{c: comparison heat Poisson}, is the perhaps surprising fact 
that in the class $\cM$ the ideal chain of equalities \eqref{f: FS equality} may fail dramatically.   
Specifically, it asserts that, if $S$ is a Damek--Ricci space, then $\huR{S}$, $\huH{S}$
and $\huP{S}$ are pairwise distinct spaces.  Notice that this is in sharp contrast with the Euclidean case discussed above. It is likely that a similar phenomenon happens on more general manifolds than Damek--Ricci spaces. As a matter of fact, we can rule out that the spaces $\huR{M}$ and $\huP{M}$ coincide when $M$ is an  arbitrary manifold in the class $\cM$ (see Corollary \ref{c: maximal heat manifold}).

In order to effectively compare $\huR{M}$, $\huH{M}$ and $\huP{M}$, 
we make use of two main ideas: the first is to consider 
certain variants of $\huH{M}$ and $\huP{M}$,
the second is to characterise $\huR{M}$ as an isomorphic image of the local Hardy space $\ghu{M}$ of Goldberg type on $M$
(we emphasize that an analogous characterisation fails
 in $\BR^n$, 
see Proposition~\ref{p: hu ghu Rn});
see Section~\ref{s: Hardy--Riesz space} for details on $\ghu{M}$.  
We offer the following comments concerning the genesis of these two ideas.  

The first is suggested by the reading of \cite{A1}, where J.-Ph.~Anker sharpened previous results of N.~Lohou\'e \cite{Lo} 
and proved that if $M$ is a symmetric space of the noncompact type, then for every $c$ in $[0,1)$ the following estimate holds:  
\[
	\bignorm{\cP^c_* f}{\lu{M}} 
	\leq C \, \big(\bignorm{f}{\lu{M}} + \bignorm{\cR f}{\lu{M}}\big).
\]
Here 
\[
\cP^c_* f
\defeq \sup_{t>0} \, \langle t\rangle^c \, \bigmod{\cP_t f},
\]
and $\langle t\rangle =\max\{1,t\}$.  In other words, $\huR{M} \subseteq \huP{M}$ (and, more generally, $\huR{M} \subseteq \huPc{M}$ for 
all $c\in [0,1)$ in the notation of Section~\ref{s: Maximal}).  This surprising result strongly suggests that it may be advantageous
to consider a finer scale of spaces $\huPc{M}$, including $\huP{M}$, defined in terms of the modified maximal function $\cP_*^c$ above.  
By analogy, it seems natural to consider a similar scale of spaces $\huHc{M}$, including $\huH{M}$, 
defined in terms of the modified maximal function $\cH_*^c$, defined as follows:
\[
\cH^c_* f
\defeq \sup_{t>0} \, \langle t\rangle^c \, \bigmod{\cH_t f}.
\]
The scales of spaces $\bigl\{\huPc{M}: c \in \BR\bigr\}$ and $\bigl\{\huHc{M}: c \in \BR\bigr\}$ will play a fundamental role in what follows
(see Section~\ref{s: Maximal}). 

The second idea was, in fact, one of the motivations behind the introduction and the study in \cite{MaMV1} of the scale 
of Hardy type spaces $\{\gXga{M}: \ga >0\}$ (see Definition~\ref{def: gXga}), which, by definition, are isometric images of the Goldberg 
type space $\ghu{M}$ via the fractional powers $\cU^\ga$ of the operator $\cU = \cL(\cI+\cL)^{-1}$.  In \cite{MaMV1} it was proved
that for manifolds in the class $\cM$ the space $\gXum{M}$ agrees with $\{f\in \ghu{M}: \mod{\cR f} \in \lu{M}\}$,
which, in principle, may be strictly contained in $\huR{M}$.  In this paper, armed with the Riesz transform characterisation of $\ghu{M}$ recently obtained in \cite{MVe}, we improve this result and show that 
the Riesz--Hardy space $\huR{M}$ agrees with the Banach space $\gXum{M}$: see Theorem~\ref{t: char Riesz}. 
We emphasize that this result justifies retrospectively the introduction of the space $\gXum{M}$.  Thus,
for every $M$ in the class $\cM$, the Riesz--Hardy space $\huR{M}$ turns out to be an isometric copy of $\ghu{M}$ via the map $\cU^{1/2}$,
a fact that fails in the Euclidean setting
(see Proposition~\ref{p: hu ghu Rn}).  

The fact that if $S$ is a Damek--Ricci space, then $\huR{S}$, $\huH{S}$ and $\huP{S}$ are mutually distinct spaces 
is a consequence of fine estimates of the kernels of certain operators on Damek--Ricci spaces and of inclusions between the 
families of Hardy type spaces $\huHc{M}$ and $\huPc{M}$ for arbitrary manifolds $M$ in the class $\cM$.  Our analysis shows that 
the containments amongst the spaces $\huHc{S}$ and $\huPc{S}$ and the abovementioned family of spaces $\gXga{S}$ are subtle and
highly nontrivial.  In particular, we show that $\gXum{S}$ is very similar to but not
quite the same as $\huPu{S}$: see Corollary \ref{c: maximal heat manifold} and Theorem \ref{t: comparison weighted poisson} below.

It is interesting to observe that $\gXum{M}$ is the space of all functions of the form 
$\cL^{1/2}(\cI+\cL)^{-1/2}f$, where $f$ belongs to the Goldberg type space $\ghu{M}$.  Amongst the direct consequences of this fact, we mention
the following.  Proving that the spectral operator $m(\cL)$ maps $\huR{M}$ to $\lu{M}$ is equivalent
to showing that the spectral operator $m(\cL) \cL^{1/2}(\cI+\cL)^{-1/2}$ maps $\ghu{M}$ to $\lu{M}$.  This does not involve estimates of gradients
any more and can hopefully be done using spectral methods.  

We mention that the results contained in this paper are versions in the continuous setting of
similar results obtained in the discrete case of graphs and trees \cite{CM}.
It is fair to observe that analysis at local scales, which turns out to be trivial on graphs, is highly nontrivial in our situation. 
Notice also that local and global analysis on manifolds cannot be neatly separated, and one influences the other.  As a consequence, our methods
turn out to be comparatively more involved than those in \cite{CM}.  For instance, our first result (concerning the abovementioned
characterisation of the Riesz--Hardy space), which hinges on the detailed analysis performed in \cite{MVe} and on spectral methods,
is rather trivial on graphs.  

\medskip

Our paper is organised as follows.  Section~\ref{s: Hardy--Riesz space} contains our first main result, Theorem~\ref{t: char Riesz}, together with
an important improvement (Proposition \ref{p: improvement tau}) of the main result in \cite{MVe}.  In Section~\ref{s: Maximal}, 
we introduce the spaces $H^1_{\cH,c}(M)$ and $H^1_{\cP,c}(M)$,
defined in terms of certain weighted heat and Poisson maximal operators, respectively.  We prove some inclusions
amongst $H^1_{\cH,c}(M)$, $H^1_{\cP,c}(M)$ and $\gXga{M}$ on any manifold $M$ in the class $\cM$.  Our methods, based on functional calculus, are 
comparatively simple and direct.  Among other things (see Corollary \ref{c: maximal heat manifold}), we show that  
\[
H_{\cP,2\ga+\vep}^1(M) \subset \gXga{M} \subset H_{\cP,2\ga-\vep}^1(M)
\]
for every $\vep>0$ and $\ga>0$.  A special case of
interest of this chain of inclusions is 
\[
H_{\cP,1+\vep}^1(M) \subset \huR{M} \subset H_{\cP,1-\vep}^1(M)
\]
for every $\vep>0$, which implies in particular that $\huR{M}$ is properly contained in $\huP{M}$.  Concerning the relationship between
$H_{\cH,c}^1(M)$ and $\gXga{M}$ we are only able to show, for an arbitrary manifold $M$ in the class $\cM$, that
\[
H_{\cH,\ga+\vep}^1(M) \subset \gXga{M}
\]
for every $\vep>0$. We can prove an inclusion in the opposite direction in the special case where the manifold is a Damek--Ricci space, where spherical Fourier analysis allows us to obtain precise estimates of the heat kernel; this is discussed in Section~\ref{s: Maximal heat operator}, which is mainly devoted to proving noninclusions amongst $\huR{S}$, $\huH{S}$ and $\huP{S}$,
where $S$ is a Damek--Ricci space.  As a consequence, we show that $\huR{S}$, $\huH{S}$ and $\huP{S}$ are
mutually distinct spaces (see Corollary~\ref{c: comparison heat Poisson}).

\medskip

In the course of the paper, the letter $C$ is used to denote a finite positive constant, whose value may change from place to place. We also write $A \asymp B$ to indicate that $A \leq C B$ and $B \leq C A$. The symbols $\subseteq$ and $\subset$ denote set inclusion and proper set inclusion respectively, while $S^\compl$ denotes the complement of the set $S$. For a real number $x$, we write $\lceil x \rceil$ to denote the smallest integer greater than or equal to $x$.

\section{A characterisation of the Riesz--Hardy space}
\label{s: Hardy--Riesz space}

In this section we prove our first main result, Theorem~\ref{t: char Riesz}, about the characterisation of the Riesz--Hardy space on a manifold $M$ of class $\cM$.  As mentioned in the introduction, 
this result highlights a relationship between the local Hardy space $\ghu{M}$ and the Riesz--Hardy space $\huR{M}$ that has no counterpart in the Euclidean setting. 
In order to clearly illustrate this, we shall briefly discuss the Euclidean case (see Proposition \ref{p: hu ghu Rn} below) before moving to the case of manifolds.
First of all, however, as the methods in this section are based on functional calculus techniques, some preliminary considerations on the matter are in order.

\subsection{The extended Dunford class}
If $\theta \in (0,\pi)$, by $\bS_\theta$ we denote the open sector in the complex plane $\BC$ with aperture $2\theta$ symmetric about the positive real axis. We recall that the \emph{extended Dunford class} $\cE(\bS_\theta)$ is the space of all bounded holomorphic functions $F : \bS_\theta \to \BC$ for which
there exist $w_0,w_\infty \in \BC$ and $\varepsilon > 0$ such that
\begin{equation}\label{f: Dunford limit}
|F(z) - w_0| + |F(1/z)-w_\infty| = O(|z|^\varepsilon) \quad\text{ as } z \to 0
\end{equation}
(see \cite[Lemma 2.2.3]{Haa}). We record here some elementary properties of $\cE(\bS_\theta)$ that will be of use in the sequel (cf.\ \cite[Examples 2.2.4 and 2.2.5]{Haa}).

\begin{lemma}\label{l: Dunford class}
Let $\theta \in (0,\pi)$.
\begin{enumerate}[label=(\roman*)]
\item\label{en:dunford_algebra} $\cE(\bS_\theta)$ is a unital algebra under pointwise addition and multiplication.
\item\label{en:dunford_reciprocal} If $F$ is in $\cE(\bS_\theta)$ and $1/F$ is a bounded holomorphic function on $\bS_\theta$, then $1/F$ is in $\cE(\bS_\theta)$.
\item\label{en:dunford_examples} Any function of the form
\begin{equation}\label{f: dunford_example}
 z \mapsto \left(\frac{a + bz}{c+dz}\right)^\gamma,
\end{equation}
with $\gamma,a,b \in [0,\infty)$ and $c,d \in (0,\infty)$, is in $\cE(\bS_\theta)$.
\end{enumerate}
\end{lemma}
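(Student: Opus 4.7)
The plan is to verify each claim directly from the limit condition \eqref{f: Dunford limit}, handling the behaviour at $0$ and at $\infty$ in parallel.

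For (i), the sum and product of bounded holomorphic functions on $\bS_\theta$ are again bounded and holomorphic, so only the limit condition needs attention. If $F, G \in \cE(\bS_\theta)$ tend to $w_0^F, w_0^G$ at the origin with exponents $\vep_F, \vep_G$, then $F + G$ tends to $w_0^F + w_0^G$ at $0$ with exponent $\min(\vep_F, \vep_G)$, while the product case is handled by the identity $FG - w_0^F w_0^G = (F - w_0^F) G + w_0^F (G - w_0^G)$ combined with the boundedness of $G$. The same reasoning applies at $\infty$, and the constant function $1$ supplies the multiplicative unit.

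For (ii), boundedness of $1/F$ on $\bS_\theta$ yields a positive lower bound for $|F|$; passing to the limit $z \to 0$ forces $|w_0| > 0$, and then the identity $1/F(z) - 1/w_0 = (w_0 - F(z))/(w_0 F(z))$ transfers the rate from $F$ to $1/F$. The argument at $\infty$ is identical.

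For (iii), I would first check that $h(z) \defeq (a+bz)/(c+dz)$ is a bounded holomorphic function on $\bS_\theta$: its pole $-c/d$ is negative real and thus outside $\bS_\theta$, while the asymptotics $h(z) \to a/c$ and $h(z) \to b/d$ take care of boundedness. To define $h^\gamma$ via the principal branch I would show that $h(\bS_\theta) \cap (-\infty, 0] = \emptyset$; the key observation is that $h(z) - \overline{h(z)}$ is a nonzero multiple of $\operatorname{Im} z$ whenever $bc \neq ad$, so $h(z) \in \BR$ forces either $h$ constant or $z \in (0,\infty)$, and in both situations $h(z) \geq 0$. The rate near the origin follows from the identity $h(z) - a/c = (bc - ad)z/(c(c+dz)) = O(|z|)$: if $a > 0$ the principal branch of $w \mapsto w^\gamma$ is smooth at $a/c$ and $h^\gamma(z) - (a/c)^\gamma = O(|z|)$, while if $a = 0$ one factorises $h(z)^\gamma = (b/c)^\gamma z^\gamma (1 + dz/c)^{-\gamma}$ and obtains $O(|z|^\gamma)$. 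The analogous analysis at $\infty$, with the role of $(a,c)$ taken by $(b,d)$, gives the limit $(b/d)^\gamma$ and a matching rate, so $F = h^\gamma$ belongs to $\cE(\bS_\theta)$ with exponent $\vep = \min\{\gamma, 1\}$ (or $\vep = \gamma$ in the degenerate cases).

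The main obstacle is the geometric step in (iii) that confirms $h(\bS_\theta)$ avoids the slit $(-\infty, 0]$, making the principal branch of the $\gamma$-th power holomorphic on all of $\bS_\theta$; everything else reduces to first-order expansion and elementary properties of bounded holomorphic functions.
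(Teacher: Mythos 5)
Your proof is correct and follows essentially the same route as the paper's: the reciprocal identity in \ref{en:dunford_reciprocal} and the case split $a>0$ versus $a=0$ in \ref{en:dunford_examples}, yielding the rates $O(|z|)$ and $O(|z|^\gamma)$ respectively, are exactly the paper's argument. The only differences are that you spell out the algebra verification in \ref{en:dunford_algebra}, which the paper delegates to a citation, and that you add the (correct) check that $h(\bS_\theta)$ avoids the slit $(-\infty,0]$ so the principal branch of $w \mapsto w^\gamma$ is well defined on $h(\bS_\theta)$ --- a point the paper leaves implicit.
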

\begin{proof}
Part \ref{en:dunford_algebra} is discussed in \cite[Section 2.2]{Haa}.

As for part \ref{en:dunford_reciprocal}, if $F \in \cE(\bS_\theta)$ and $1/F$ is bounded, then the limits $w_0,w_\infty \in \BC$ of $F$ at $0$ and $\infty$ must be nonzero. Consequently
\[
\left|\frac{1}{F(z)}-\frac{1}{w_0}\right| + \left|\frac{1}{F(1/z)}-\frac{1}{w_\infty}\right|= \frac{|F(z)-w_0|}{|F(z)| |w_0|}+ \frac{|F(1/z)-w_0|}{|F(1/z)| |w_\infty|} = O(|z|^\varepsilon)
\]
as $z \to 0$, for some $\varepsilon > 0$, where we used \eqref{f: Dunford limit} and the boundedness of $1/F$.

Finally, for part \ref{en:dunford_examples}, any function $F$ of the form \eqref{f: dunford_example} is bounded and holomorphic on $\bS_\theta$, and has finite limits $w_0 = (a/c)^\gamma$ and $w_\infty = (b/d)^\gamma$ as $z \to 0$ and $z \to \infty$ respectively. If $w_0 \neq 0$, then $F$ extends to a holomorphic function in a neighbourhood of $0$, thus clearly $|F(z) - w_0| = O(|z|)$ as $z \to 0$. If instead $w_0 = 0$, then $a=0$ and $|F(z)| = O(|z|^\gamma)$ as $z \to 0$. Similar considerations apply to $F(1/z)$ and $w_\infty$ in place of $F(z)$ and $w_0$, thus showing that $F$ is in $\cE(\bS_\theta)$.
\end{proof}

The relevance for our discussion of the extended Dunford class lies in its role in the functional calculus for sectorial operators, as described, e.g., in \cite[Chapter 2]{Haa}.
Specifically:
\begin{itemize}
\item if $\cT$ is a sectorial operator of angle $\omega \in [0,\pi)$ on a Banach space $\cX$ (see, e.g., \cite[Section 2.1]{Haa}), then, for all $\theta \in (\omega,\pi)$ and $F \in \cE(\bS_\theta)$, the operator $F(\cT)$ is bounded on $\cX$ \cite[Theorem 2.3.3]{Haa};
\item if $\cT$ is the generator of a uniformly bounded semigroup of class $C_0$ on $\cX$, then $\cT$ is sectorial of angle $\pi/2$ (see \cite[Section 2.1.1, p.\ 24]{Haa}).
\end{itemize}
These facts will be repeatedly used in what follows.

\subsection{Hardy spaces in the Euclidean setting}
The local Hardy space $\ghu{\BR^n}$ was introduced in D.~Goldberg's paper \cite{Go}.  Since then, $\ghu{\BR^n}$ is commonly
referred to as the Goldberg space.  
We start by establishing a result that relates the classical Hardy space $\hu{\BR^n}=\huR{\BR^n}$ and the Goldberg space $\ghu{\BR^n}$.  
Consider the 
operator $\cL^{1/2}(\cI+\cL)^{-1/2}$, also denoted by $\cU^{1/2}$, where $-\cL = \Delta$ denotes the standard Laplacian. 

\begin{proposition} \label{p: hu ghu Rn}
The operator $\cU^{1/2}$ is bounded and injective, but not surjective, from $\ghu{\BR^n}$ to $\hu{\BR^n}$. Thus, the space $H^1(\BR^n)$ contains $\cU^{1/2} [\ghu{\BR^n}]$ properly.
\end{proposition}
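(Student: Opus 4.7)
The proof decomposes into three independent claims: boundedness, injectivity, and non-surjectivity.

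For \emph{boundedness} into $\hu{\BR^n}$ I plan to use the Riesz characterisation $\hu{\BR^n} = \{h \in \lu{\BR^n} : \mod{\cR h} \in \lu{\BR^n}\}$ and verify the two required inclusions separately. That $\cU^{1/2} f \in \lu{\BR^n}$ for every $f \in \ghu{\BR^n} \subset \lu{\BR^n}$ reduces to showing that $\cU^{1/2} = F(\cL)$ is bounded on $\lu{\BR^n}$ with $F(z) = (z/(1+z))^{1/2}$; Lemma~\ref{l: Dunford class}\ref{en:dunford_examples} gives $F \in \cE(\bS_\theta)$, the heat semigroup is uniformly bounded of class $C_0$ on $\lu{\BR^n}$ so that $\cL$ is sectorial of angle $\pi/2$ there, and the Dunford calculus recalled after the lemma then yields the required $\lu{\BR^n}$-boundedness. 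The second inclusion $\mod{\cR \cU^{1/2} f} \in \lu{\BR^n}$ would follow from the operator identity
\[
\cR \cU^{1/2} = \nabla \cL^{-1/2} \cdot \cL^{1/2}(\cI+\cL)^{-1/2} = \nabla (\cI+\cL)^{-1/2},
\]
reducing the claim to boundedness of the local Riesz transform $\nabla(\cI+\cL)^{-1/2}$ from $\ghu{\BR^n}$ into $\lu{\BR^n}$, which is a classical local Calder\'on--Zygmund result in the Goldberg setting.

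For \emph{injectivity} I would simply take Fourier transforms: if $\cU^{1/2} f = 0$ with $f \in \lu{\BR^n}$, then $m \wh f \equiv 0$ where $m(\xi) = \mod{\xi}/\sqrt{1+\mod{\xi}^2}$ is nonzero off the origin, so $\wh f \equiv 0$ and hence $f = 0$.

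The substantive part is \emph{non-surjectivity}, which I would settle by an explicit counterexample. Pick $\phi \in C_c^\infty(\BR^n)$ with $\int \phi \ne 0$ and set $g = \partial_1 \phi$: then $g$ is bounded, compactly supported, and of zero mean, hence a constant multiple of a classical atom, so $g \in \hu{\BR^n}$. If $g = \cU^{1/2} f$ for some $f \in \ghu{\BR^n} \subset \lu{\BR^n}$, the Fourier identity $m \wh f = \wh g$ gives, for $\xi \neq 0$,
\[
\wh f(\xi) = \frac{\sqrt{1+\mod{\xi}^2}}{\mod{\xi}}\, \wh g(\xi) = i\, \frac{\xi_1}{\mod{\xi}}\, \sqrt{1+\mod{\xi}^2}\, \wh \phi(\xi).
\]
As $\xi \to 0$ the right-hand side is asymptotic to $i(\xi_1/\mod{\xi})\int \phi$, whose limit depends on the direction of approach to the origin; but $f \in \lu{\BR^n}$ forces $\wh f$ to be continuous on all of $\BR^n$, a contradiction.

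The main obstacle I anticipate is simply pinning down the $\ghu{\BR^n} \to \lu{\BR^n}$ boundedness of $\nabla(\cI+\cL)^{-1/2}$; it is a standard local Calder\'on--Zygmund operator in the Euclidean setting, so a precise reference to Goldberg's original treatment should suffice. Everything else (Dunford calculus, Fourier computation) is essentially routine once the correct algebraic identity for $\cR \cU^{1/2}$ is written down.
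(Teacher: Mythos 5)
Your treatment of boundedness and injectivity coincides with the paper's: the same Dunford-class argument for the $\lu{\BR^n}$-boundedness of $\cU^{1/2}=\vp(\cL)$ with $\vp(z)=(z/(1+z))^{1/2}$, the same algebraic identity $\cR\,\cU^{1/2}=\nabla(\cI+\cL)^{-1/2}$ reducing matters to the classical equality $\ghuRtau{\BR^n}=\ghu{\BR^n}$, and the same Fourier-transform argument for injectivity. These parts are correct as written.

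Where you genuinely diverge is non-surjectivity. The paper argues softly: if $\cU^{1/2}$ were surjective, its inverse $\cU^{-1/2}$ would be bounded from $\hu{\BR^n}$ to $\lu{\BR^n}$ (implicitly via the open mapping theorem), hence so would $\cL^{-1/2}=(\cI+\cL)^{-1/2}\cU^{-1/2}$, and this is ruled out by the scaling argument $\cL^{-1/2}T_R = R\,T_R\cL^{-1/2}$ with $T_R$ an isometry of both $\lu{\BR^n}$ and $\hu{\BR^n}$. You instead produce an explicit witness $g=\partial_1\phi$ with $\int\phi\neq 0$: it is a multiple of an atom, hence in $\hu{\BR^n}$, yet any $L^1$ preimage $f$ under $\cU^{1/2}$ would have $\wh f(\xi)=i\xi_1\mod{\xi}^{-1}\sqrt{1+\mod{\xi}^2}\,\wh\phi(\xi)$ off the origin, whose limit at $0$ is direction-dependent, contradicting the continuity of $\wh f$ guaranteed by $f\in\lu{\BR^n}$. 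This is correct (the pointwise identity $m\wh f=\wh g$ holds everywhere since both $\wh f$ and $\wh g$ are continuous), and it buys something the paper's argument does not: a concrete element of $\hu{\BR^n}$ outside the range, and indeed outside $\cU^{1/2}[\lu{\BR^n}]$, without invoking the open mapping theorem. The paper's route, on the other hand, isolates the ``real'' obstruction --- the failure of $\hu{}$--$L^1$ boundedness of $\cL^{-1/2}$ under dilations --- which is the structural feature whose absence on manifolds of class $\cM$ (spectral gap, no scaling) makes Theorem~\ref{t: char Riesz} possible; your counterexample is in the same spirit but phrased on the Fourier side. Either argument completes the proof.
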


\begin{proof}
First we show that if $g$ is in $\ghu{\BR^n}$, then $\cU^{1/2} g$ is in $\hu{\BR^n}$.  Since $\cL$ generates a contractive semigroup
of class $C_0$ on $\lu{\BR^n}$ and the function 
\[
\vp(z) \defeq \left(\frac{z}{1+z}\right)^{1/2}
\]
is in the extended Dunford class
$\cE(\bS_\theta)$ for every $\theta$ in $(\pi/2,\pi)$ (see Lemma \ref{l: Dunford class}), 
the operator $\cU^{1/2} = \vp(\cL)$ is bounded on $\lu{\BR^n}$ (see \cite[Theorem 2.3.3]{Haa}).  Hence $\cU^{1/2}g$
is in $\lu{\BR^n}$.  In order to show that $\cU^{1/2}g$ is in $\hu{\BR^n}$, it suffices to prove that 
$\mod{\cR\cU^{1/2} g}$ is in $\lu{\BR^n}$.  Notice that 
\[
	\mod{\cR\cU^{1/2} g} = \mod{\nabla \cL^{-1/2} \cU^{1/2} g} = \mod{\nabla (\cI+\cL)^{-1/2} g},   
\]
which belongs to $\lu{\BR^n}$, because $g$ belongs to $\ghu{\BR^n}$, by assumption. 
A close examination of the argument above shows that there exists a constant $C$ such that 
\[
	\bignorm{\cU^{1/2} g}{\hu{\BR^n}} 
	\leq C\, \big(1+\bigopnorm{\cU^{1/2}}{\lu{\BR^n}}\big) \, \bignorm{g}{\ghu{\BR^n}}
	\quant g \in \ghu{\BR^n}.
\]

Note that $\cU^{1/2}$ is injective on $\ghu{\BR^n}$.  Indeed, if $\cU^{1/2}g = 0$, then
\[
\frac{\mod{\xi}}{\sqrt{1+\mod{\xi}^2}} \, \wh g(\xi) = 0  
\quant \xi \in \BR^n.  
\]
Therefore $\wh g(\xi)$ vanishes on $\BR^n\setminus\{0\}$, hence everywhere, for $\wh g$ is continuous.  Thus, $g=0$, as required. 

Finally, we show that $\cU^{1/2} \left[\ghu{\BR^n}\right]$ is properly contained in $\hu{\BR^n}$.  We argue by contradiction.  If $\cU^{1/2}$
were surjective, then $\cU^{-1/2}$ would be bounded from $\hu{\BR^n}$ to $\ghu{\BR^n}$, and in particular from $\hu{\BR^n}$ to $\lu{\BR^n}$.
We prove that this fails.
Indeed, $(\cI+\cL)^{-1/2} \cU^{-1/2} = \cL^{-1/2}$, and $(\cI+\cL)^{-1/2}$ is bounded on $\lu{\BR^n}$. Thus, if $\cU^{-1/2}$ were bounded from $\hu{\BR^n}$ to $\lu{\BR^n}$, the same would be true of $\cL^{-1/2}$. However, the boundedness of $\cL^{-1/2}$ from $\hu{\BR^n}$ to $\lu{\BR^n}$ is easily disproved by homogeneity considerations.

Indeed, for any $R > 0$, the formula $T_R f(x) = R^{-n} f(x/R)$ defines an isometric automorphism $T_R$ of both $\lu{\BR^n}$ and $\hu{\BR^n}$. On the other hand, by homogeneity of $\cL = -\Delta$ on $\BR^n$, we deduce that $\cL^{-1/2} T_R f = R \, T_R \cL^{-1/2} f$; thus, for any nonzero $f \in \hu{\BR^n}$,
\[
\| \cL^{-1/2} T_R f \|_{\lu{\BR^n}} = R \| \cL^{-1/2} f \|_{\lu{\BR^n}} \to \infty \qquad\text{as } R \to \infty,
\]
while
\[
\| T_R f \|_{\hu{\BR^n}} = \| f \|_{\hu{\BR^n}} \quant R > 0,
\]
whence $\cL^{-1/2}$ cannot be bounded from $\hu{\BR^n}$ to $\lu{\BR^n}$.
\end{proof}

\subsection{Local Hardy spaces on manifolds with bounded geometry}
We now move to the case of manifolds. The construction of a local Hardy space by Goldberg was extended to a certain class of Riemannian manifolds with strongly bounded geometry by
M.~Taylor \cite{T}, further generalised to the setting of metric measure spaces \cite{MVo}, and recently investigated in \cite{MaMV2}.  
In the case where $M$ is a complete Riemannian manifold with Ricci curvature bounded from below, the space $\ghu{M}$ can be characterised in several ways.  In particular, it admits an atomic and an ionic decomposition \cite{MVo}.  
If we assume further that $M$ has positive injectivity radius, then $\ghu{M}$ 
can be equivalently defined in terms of either the local heat maximal operator 
or the local Poisson maximal operator \cite{MaMV2}. 

For the reader's convenience, we report below one of the several equivalent definitions of $\ghu{M}$ given in \cite{MVo} in terms of atomic decompositions. As in \cite{MaMV2}, here we specialise to the case where the manifold $M$ belongs to the class $\cM_0$ of \emph{Riemannian manifolds with bounded geometry}, that is, complete connected Riemannian manifolds with positive injectivity radius and Ricci curvature bounded from below. We point out that the class $\cM_0$ is strictly larger than the class $\cM$ discussed in the Introduction, for $\cM_0$ contains also doubling manifolds, such as $\BR^n$ with the standard Euclidean structure, as well as compact manifolds. Here and in the sequel, we denote by $\mu$ the Riemannian measure on the manifold $M$.

\begin{definition}
Let $M$ be a manifold in the class $\cM_0$. A \emph{standard $\ghu{M}$-atom} is a function $a$ in $\lu{M}$ supported in a ball $B$ of radius at most one, satisfying the following conditions:
\begin{enumerate}[label=(\roman*)]
\item \emph{size condition}: $\|a\|_2 \leq \mu(B)^{-1/2}$;
\item \emph{cancellation condition}: $\int_B a \wrt\mu = 0$.
\end{enumerate}
A \emph{global $\ghu{M}$-atom} is a function $a$ in $\lu{M}$ supported in a ball $B$ of radius exactly one, satisfying the size condition above (but possibly not the cancellation condition). Standard and global $\ghu{M}$-atoms will be referred to simply as \emph{$\ghu{M}$-atoms}.
\end{definition}

\begin{definition}
Let $M$ be a manifold in the class $\cM_0$. The \emph{local atomic Hardy space} $\ghu{M}$ is the space of all functions $f$ in $\lu{M}$ that admit a decomposition of the form
\begin{equation}\label{f: atomicdec}
f = \sum_{j=1}^\infty \lambda_j a_j,
\end{equation}
where the $a_j$'s are $\ghu{M}$-atoms and $\sum_{j=1}^\infty |\lambda_j| < \infty$. The norm $\| f \|_{\frh^1}$ of $f$ is the infimum of $\sum_{j=1}^\infty |\lambda_j|$ over all decompositions \eqref{f: atomicdec} of $f$.
\end{definition}

We refer the reader to \cite{MVo} and \cite{MaMV2} for further details on $\ghu{M}$, including other equivalent characterisations.

Given a Riemannian manifold $M$ in the class $\cM_0$ and a positive number~$\tau$, 
we consider the translated Laplacian $\tau\cI+\cL$ and define
the associated \emph{translated Riesz transform} $\cR_\tau \defeq \nabla (\tau\cI + \cL)^{-1/2}$, 
and the \emph{local Riesz--Hardy} space
\begin{equation} \label{f: ghuBRn}
\ghuRtau{M}
\defeq \big\{f \in \lu{M}: \mod{\cR_\tau f} \in \lu{M}\big\}.
\end{equation}
We equip $\ghuRtau{M}$ with the norm
\[
\bignorm{f}{\ghuRtau{M}} \defeq \bignorm{f}{1} + \bignorm{\mod{\cR_\tau f}}{1}.
\]  

It is a classical result that $\ghuRtau{\BR^n}=\ghu{\BR^n}$ for all $\tau >0$. 
It was recently proved in \cite[Theorem~7.9]{MVe} that, if $M$ is in the class $\cM_0$ and $\tau$ is a large positive number, then 
$\ghuRtau{M}=\ghu{M}$ and their norms are equivalent. 
In fact, the requirement that $\tau$ be a large positive number is not needed for any manifold $M$ with bounded geometry,  
as shown in the next proposition that improves \cite[Theorem~7.9]{MVe}.  

\begin{proposition} \label{p: improvement tau}
Suppose that $M$ is Riemannian manifold in the class $\cM_0$,
 and let~$\tau$ be a positive number.  Then $\ghuRtau{M}=\ghu{M}$ and their norms are equivalent. 
\end{proposition}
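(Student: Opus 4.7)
The plan is to bootstrap from the large-$\tau$ case of \cite[Theorem~7.9]{MVe}. Fix $\tau_0$ sufficiently large that $\frh^1_{\cR_{\tau_0}}(M) = \ghu{M}$ with equivalent norms (applying the cited theorem with $\tau_0$ in place of $\tau$). The proposition will follow once we establish the key estimate that \emph{for every pair $\tau_1, \tau_2 > 0$, the difference $\cR_{\tau_1} - \cR_{\tau_2}$ extends to a bounded operator from $\lu{M}$ to $\lu{M;TM}$}. Indeed, granting this,
\[
\|\cR_\tau f\|_1 \leq \|\cR_{\tau_0} f\|_1 + \|(\cR_\tau - \cR_{\tau_0}) f\|_1 \leq C \|f\|_{\ghu{M}}
\]
for every $f \in \ghu{M}$, hence $\ghu{M} \subseteq \ghuRtau{M}$; the reverse inclusion $\ghuRtau{M} \subseteq \ghu{M}$ follows analogously by bounding $\|\cR_{\tau_0} f\|_1$ by $\|\cR_\tau f\|_1 + \|(\cR_{\tau_0}-\cR_\tau)f\|_1$.

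For the key estimate itself, the scalar identity $\tfrac{d}{ds}(s+\cL)^{-1/2} = -\tfrac{1}{2}(s+\cL)^{-3/2}$ gives
\[
\cR_{\tau_1} - \cR_{\tau_2} = -\frac{1}{2}\int_{\tau_2}^{\tau_1} \nabla(s+\cL)^{-3/2} \wrt s,
\]
so it suffices to bound $\|\nabla(s+\cL)^{-3/2}\|_{\lu{M}\to\lu{M;TM}}$ uniformly for $s$ in compact subsets of $(0,\infty)$.

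This uniform bound would be obtained in two steps. First, choose $s_0$ larger than the constant $K$ appearing in the heat-gradient estimate $\|\nabla \cH_t\|_{\lu{M}\to\lu{M;TM}} \leq C(t^{-1/2}+1)e^{Kt}$, which holds on any manifold of bounded geometry by classical Li--Yau-type arguments. Combining this estimate with the subordination formula
\[
\nabla(s_0+\cL)^{-3/2} = \frac{1}{\Gamma(3/2)}\int_0^\infty t^{1/2} e^{-t s_0}\, \nabla \cH_t \wrt t
\]
yields a convergent scalar integral bound, so $\nabla(s_0+\cL)^{-3/2}$ is bounded from $\lu{M}$ to $\lu{M;TM}$. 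Second, for an arbitrary $s > 0$, factor
\[
\nabla(s+\cL)^{-3/2} = \nabla(s_0+\cL)^{-3/2} \cdot X_s(\cL), \qquad X_s(z) \defeq \Bigl(\frac{s_0+z}{s+z}\Bigr)^{3/2}.
\]
By Lemma \ref{l: Dunford class}\ref{en:dunford_examples} and \ref{en:dunford_algebra}, $X_s$ lies in $\cE(\bS_\theta)$ for every $\theta \in (0,\pi)$; since $\cL$ generates a contractive $C_0$-semigroup on $\lu{M}$ (and so is sectorial of angle $\pi/2$), the operator $X_s(\cL)$ is $\lu{M}$-bounded, with norm uniform as $s$ varies in compact subsets of $(0,\infty)$. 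Composing with the first step completes the uniform bound, and integration over $s \in [\tau_2,\tau_1]$ closes the key estimate.

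The main obstacle is the initial $\lu{M}$-boundedness of $\nabla(s_0+\cL)^{-3/2}$, which rests on the heat-gradient estimate under bounded geometry --- standard but the only genuinely geometric input needed. Once this beachhead is secured for a single large $s_0$, the extended Dunford-class functional calculus cleanly propagates the bound to every $s > 0$ without further geometric hypotheses, and the proposition then reduces to the fundamental theorem of calculus displayed above.
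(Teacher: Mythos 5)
Your argument is correct, but it takes a genuinely different route from the paper's. The paper does not estimate the \emph{difference} $\cR_{\tau_1}-\cR_{\tau_2}$ at all: it observes the exact intertwining identity $\cR_{\tau'} = \cR_\tau\,\vp_{\tau}^{\tau'}(\cL)$ with $\vp_{\tau}^{\tau'}(z)=\bigl((\tau'+z)/(\tau+z)\bigr)^{1/2}$, notes via Lemma~\ref{l: Dunford class} and \cite[Theorem~3.1]{MaMV1} that $\vp_{\tau}^{\tau'}(\cL)$ is an automorphism of both $\lu{M}$ \emph{and} $\ghu{M}$ (with inverse $\vp_{\tau'}^{\tau}(\cL)$), and hence that it maps $\ghuRtaup{M}$ isomorphically onto $\ghuRtau{M}$; the large-$\tau'$ case of \cite[Theorem~7.9]{MVe} then transports instantly. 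That proof is purely functional-calculus and uses no gradient estimates. Your perturbative scheme instead reduces everything to the $\lu{M}$-boundedness of $\cR_{\tau_1}-\cR_{\tau_2}$ via the fundamental theorem of calculus and subordination, at the price of invoking the heat-gradient bound $\|\nabla\cH_t\|_{\lu{M}\to\lu{M;TM}}\leq C(t^{-1/2}+1)\e^{Kt}$ --- a genuine geometric input (valid on $\cM_0$ by Li--Yau-type arguments, and essentially the engine behind \cite[Theorem~7.9]{MVe} itself) that the paper's proof entirely avoids; in exchange, you only ever need the functional calculus on $\lu{M}$, not on $\ghu{M}$. Your intermediate claim that $\cR_{\tau_1}-\cR_{\tau_2}$ is bounded on $\lu{M}$ is in fact a stronger and independently useful statement. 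Two small points to tighten: the functional calculus requires $\theta\in(\pi/2,\pi)$ (not all of $(0,\pi)$) since $\cL$ is sectorial of angle $\pi/2$; and the asserted uniformity of $\|X_s(\cL)\|_{\lu{M}\to\lu{M}}$ for $s$ in compact subsets of $(0,\infty)$ should be justified by noting that the sup norm of $X_s$ on $\bS_\theta$ and the constants in the defining estimates \eqref{f: Dunford limit} are uniform there (alternatively, writing $\nabla\cH_t=\nabla\cH_1\cH_{t-1}$ for $t\geq1$ removes the factor $\e^{Kt}$ and lets you run the subordination argument directly for every $s>0$, dispensing with the second step altogether).
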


\begin{proof}
Consider positive numbers $\tau$ and $\tau'$, and the function
\[
\vp_\tau^{\tau'} (z) = \left(\frac{\tau' + z}{\tau+z}\right)^{1/2}.
\] 
By Lemma \ref{l: Dunford class}, the function $\vp_\tau^{\tau'}$ belongs to the extended Dunford class $\cE(\bS_\theta)$ for any $\theta$ in $(\pi/2,\pi)$.
Since $\cL$ is the generator of a uniformly bounded semigroup of class $C_0$ on $\ghu{M}$ (see \cite[Theorem~3.1]{MaMV1}), the operator $\vp_\tau^{\tau'}(\cL)$ is bounded on $\ghu{M}$ by \cite[Theorem~2.3.3]{Haa} for every pair of positive numbers $\tau$ and $\tau'$.  The same also applies with $L^1(M)$ in place of $\ghu{M}$. As $\vp_{\tau'}^{\tau}(\cL)$ is the inverse of $\vp_\tau^{\tau'}(\cL)$, we conclude that $\vp_\tau^{\tau'}(\cL)$ is a topological vector space automorphism of $L^1(M)$ and also of $\ghu{M}$, for any $\tau,\tau' > 0$. Additionally,
\[
\cR_{\tau'} = \cR_\tau \, \vp_{\tau}^{\tau'}(\cL) ,
\]
so from \eqref{f: ghuBRn} we readily deduce that $\vp_{\tau}^{\tau'}(\cL) : \ghuRtaup{M} \to \ghuRtau{M}$ is bounded, and actually an isomorphism with inverse $\vp_{\tau'}^{\tau}(\cL)$.

Now, by \cite[Theorem~7.9]{MVe}, we can find $\tau'$ large enough that $\ghu{M} = \ghuRtaup{M}$. Then, for any positive number $\tau$,
\[
\ghuRtau{M} = \vp_{\tau}^{\tau'}(\cL) [\ghuRtaup{M}] = \vp_{\tau}^{\tau'}(\cL) [\ghu{M}] = \ghu{M},
\]
as required.
\end{proof}

\subsection{A family of Hardy type spaces}
We recall the definition of the Hardy type spaces $\gXga{M}$, $\ga>0$, recently introduced in \cite{MaMV1}, where $M$ is a manifold in the class $\cM$.
Consider the family $\big\{\cU_\tau \defeq \cL\,(\tau \cI+\cL)^{-1} : \tau>0\big\}$ of (spectrally defined) operators.
We also write $\cU$ instead of $\cU_1$ for brevity.
For any $\tau,\gamma>0$, the mapping $\cU_\tau^\ga$ is bounded on $\ghu{M}$ \cite[Section~3]{MaMV1}.
We emphasize that $\cU_\tau^{\ga}\big[\ghu{M}\big]$ is independent of $\tau$
and there exists a positive constant $C$ such that
\[
C^{-1} \bignorm{\cU^{-\ga} f}{\ghu{M}}
\leq  \bignorm{\cU_{\tau}^{-\ga} f}{\ghu{M}}
\leq C  \bignorm{\cU^{-\ga} f}{\ghu{M}}
\]
for every $f$ in $\cU^{\ga}\big[\ghu{M}\big]$ (see \cite[Proposition~3.5]{MaMV1}).

\begin{definition} \label{def: gXga}
We denote by $\gXga{M}$ the space $\cU^{\ga}\big[\ghu{M}\big]$, endowed with
the norm that makes $\cU^{\ga}$ an isometry, i.e.,
\[
	\bignorm{f}{\gXga{M}}
	\defeq \bignorm{\cU^{-\ga} f}{\ghu{M}}
	\quant f \in \cU^{\ga}\big[\ghu{M}\big].
\]
\end{definition}

See \cite{MaMV1} for further information on $\gXga{M}$, $\ga>0$.  
We record here a few useful properties of the spaces $\gXga{M}$ that will be important in the sequel.

\begin{proposition}\label{p: cinftyc}
Let $M$ be a manifold in the class $\cM$ and let $\gamma > 0$.
\begin{enumerate}[label=(\roman*)]
\item\label{en:cinftyc_decr} $\gXga{M}$ properly contains $\fX^{\gamma+\vep}(M)$ for all $\vep>0$.
\item\label{en:cinftyc_car} $f \in \gXga{M}$ if and only if $f \in \ghu{M}$ and $\cL^{-\gamma} f \in \ghu{M}$.
\item\label{en:cinftyc_cinftyc} If $\psi \in C^\infty_c(M)$, then $\cL^\ga \psi \in \gXga{M}$.
\end{enumerate}
\end{proposition}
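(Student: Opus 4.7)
The overall strategy is to reduce each of the three assertions to a functional-calculus statement on $\ghu{M}$, relying on the sectoriality of $\cL$ together with the spectral identities $\cU^\gamma=\cL^\gamma(\cI+\cL)^{-\gamma}$ and $\cU^{-\gamma}=(\cI+\cL)^\gamma\cL^{-\gamma}$. I address~\ref{en:cinftyc_cinftyc} first: the plan is to write $\cL^\gamma\psi=\cU^\gamma[(\cI+\cL)^\gamma\psi]$ and to show $(\cI+\cL)^\gamma\psi\in\ghu{M}$. Setting $N\defeq\lceil\gamma\rceil$, the factorisation $(\cI+\cL)^\gamma=(\cI+\cL)^{\gamma-N}(\cI+\cL)^N$ reduces this to two observations: $(\cI+\cL)^N\psi\in C^\infty_c(M)\subset\ghu{M}$ (since $\cL=-\Delta$ is a local differential operator that preserves $C^\infty_c$), and $(\cI+\cL)^{\gamma-N}$, having non-positive exponent, is bounded on $\ghu{M}$ by Lemma~\ref{l: Dunford class}\ref{en:dunford_examples} combined with the sectorial functional calculus.

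For~\ref{en:cinftyc_car}, the forward direction is routine: $f=\cU^\gamma g$ with $g\in\ghu{M}$ gives $f\in\ghu{M}$ and $\cL^{-\gamma}f=(\cI+\cL)^{-\gamma}g\in\ghu{M}$, again by Lemma~\ref{l: Dunford class}\ref{en:dunford_examples}. The converse is where the main obstacle lies. Given $f,\,h\defeq\cL^{-\gamma}f\in\ghu{M}$, one needs $g\defeq\cU^{-\gamma}f=(\cI+\cL)^\gamma h\in\ghu{M}$; equivalently, $F(\cL)f\in\ghu{M}$ for $F(z)=((1+z)/z)^\gamma$. The branch singularity of $F$ at $z=0$ prevents a direct appeal to the Dunford calculus. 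My plan is to exploit the spectral gap $\sigma(\cL)\subseteq[b,\infty)$, on which $F$ is bounded, together with a bounded $H^\infty$-calculus for $\cL$ on $\ghu{M}$ on a shifted sector of the form $\{z\in\BC:\Re z>b'\}$ with $0<b'<b$. Equivalently, one may expand $F$ in a Neumann-type series $\sum_{k\geq 0}\binom{\gamma}{k}\cL^{-k}$ (finite when $\gamma\in\BN$; convergent at the spectral level after a rescaling reducing to the case $b>1$), and show that each intermediate term $\cL^{-k}f$ lies in $\ghu{M}$ via a Stein-type complex interpolation of the analytic family $\zeta\mapsto\cL^{-\zeta}f$ in the strip $0\leq\Re\zeta\leq\gamma$, with endpoint controls provided by $f$ and $h$. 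Carrying out this interpolation at the level of $\ghu{M}$ is the principal technical crux, and I expect it to be the hardest step of the proposition.

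Part~\ref{en:cinftyc_decr} is then a consequence of~\ref{en:cinftyc_car}. The inclusion $\fX^{\gamma+\vep}(M)\subseteq\gXga{M}$ follows from $\cU^{\gamma+\vep}=\cU^\gamma\cU^\vep$ and the $\ghu{M}$-boundedness of $\cU^\vep$; strictness is equivalent, by~\ref{en:cinftyc_car} applied with exponent $\vep$, to the existence of some $f_0\in\ghu{M}$ with $\cL^{-\vep}f_0\notin\ghu{M}$. I would construct $f_0$ as a suitable global $\ghu{M}$-atom supported in a unit ball, and estimate $\cL^{-\vep}f_0$ via the subordination formula
\[
\cL^{-\vep}=\Gamma(\vep)^{-1}\int_0^\infty t^{\vep-1}\e^{-t\cL}\wrt t,
\]
using the exponential volume growth of manifolds in the class~$\cM$ together with standard heat-kernel lower bounds at large times to force $\cL^{-\vep}f_0$ out of $\lu{M}$, and hence out of $\ghu{M}$.
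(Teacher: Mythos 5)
Your treatment of part \ref{en:cinftyc_cinftyc} coincides with the paper's: the identity $\cU^{-\gamma}\cL^\gamma\psi=(\cI+\cL)^{\gamma-N}(\cI+\cL)^N\psi$ with $N>\gamma$, the locality of $\cL$ on $C^\infty_c(M)$, and the $\frh^1$-boundedness of $(\cI+\cL)^{\gamma-N}$. For parts \ref{en:cinftyc_decr} and \ref{en:cinftyc_car} the paper does not give a proof at all but cites \cite[Propositions~4.14 and 4.2]{MaMV1}, so you are reconstructing arguments it outsources. The forward implication in \ref{en:cinftyc_car} and the inclusion in \ref{en:cinftyc_decr} are fine, and your plan for strictness (a nonnegative global $\ghu{M}$-atom $f_0$ with $\cL^{-\vep}f_0\notin\lu{M}$) is workable and indeed simpler than you make it: by stochastic completeness $\|\cH_t f_0\|_{L^1}=\|f_0\|_{L^1}$, so Tonelli gives $\|\cL^{-\vep}f_0\|_{L^1}=c_\vep\ioty t^{\vep}\|\cH_t f_0\|_{L^1}\dtt t=\infty$ without any heat-kernel lower bounds.

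The genuine gap is in the converse of \ref{en:cinftyc_car}, and both routes you sketch rest on tools that are not available. First, the spectral gap is an $L^2$ statement and does not localise the spectrum relevant to $L^1$- or $\frh^1$-boundedness away from $0$: on the manifolds at hand (e.g.\ symmetric spaces) the $L^1$-spectrum of $\cL$ is a parabolic region touching the origin, so there is no bounded holomorphic calculus for $\cL$ on $\ghu{M}$ supported on a shifted region $\{\Re z>b'\}$ with $b'>0$, and the singularity of $F(z)=((1+z)/z)^\gamma$ at $z=0$ cannot be evaded this way; for the same reason the Neumann series $\sum_k\binom{\gamma}{k}\cL^{-k}$ has no chance of converging as operators on $\ghu{M}$. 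Second, the Stein interpolation of $\zeta\mapsto\cL^{-\zeta}f$ requires control of the imaginary powers $\cL^{i\tau}$ on $\ghu{M}$ at the endpoint $\Re\zeta=0$; the functions $z\mapsto z^{i\tau}$ have no limits at $0$ and $\infty$, hence are not in $\cE(\bS_\theta)$, so this does not follow from the calculus you invoke and is itself a hard multiplier theorem rather than a routine step. The standard repair --- the very device the paper uses for $\gamma=1/2$ in the proof of Theorem \ref{t: char Riesz} --- is to avoid the singular symbol altogether by writing
\[
\cU^{-\gamma}f=(\cI+\cL)^{\gamma}\cL^{-\gamma}f=\vp(\cL)\bigl(f+\cL^{-\gamma}f\bigr),\qquad \vp(z)=\frac{(1+z)^\gamma}{1+z^\gamma},
\]
and checking that $\vp$ (or a suitable variant when $\gamma$ is large enough that $1+z^\gamma$ acquires zeros in $\bS_\theta$) is bounded holomorphic with limits at $0$ and $\infty$, hence lies in $\cE(\bS_\theta)$ for some $\theta>\pi/2$ by Lemma~\ref{l: Dunford class}; then $\vp(\cL)$ is bounded on $\ghu{M}$ and the hypotheses $f,\cL^{-\gamma}f\in\ghu{M}$ yield $\cU^{-\gamma}f\in\ghu{M}$ directly, with no interpolation and no use of the spectral gap.
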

\begin{proof}
Parts \ref{en:cinftyc_decr} and \ref{en:cinftyc_car} are proved in \cite[Proposition~4.14 and Proposition~4.2]{MaMV1}. As for part \ref{en:cinftyc_cinftyc}, note that
\[
\cU^{-\gamma} \cL^\gamma \psi = (1+\cL)^{\gamma} \psi = (1+\cL)^{\gamma-N} (1+\cL)^N \psi
\]
for all $N \in \BN$.  Since $\psi \in C^\infty_c(M)$, we deduce that $(1+\cL)^N \psi \in C^\infty_c(M)$, 
and therefore $(1+\cL)^N \psi \in \ghu{M}$; the conclusion follows by the 
$\frh^1$ boundedness of $(1+\cL)^{\gamma-N}$ for $N > \gamma$ \cite[Theorem~3.1]{MaMV1}.
\end{proof}

\subsection{Negative powers of the Laplacian and Riesz transform}
As a consequence of the spectral gap assumption for manifolds $M$ of the class $\cM$, the powers $\cL^{-\ga}$, $\ga>0$, are bounded on $L^p(M)$ for any $p \in (1,\infty)$, though this fails for $p=1$. Nevertheless we can show that $\cL^{-\ga}$ maps $L^1(M)$ into $L^1_{\loc}(M)$.

\begin{proposition}\label{p: apriori}
Let $M$ be a manifold in the class $\cM$ and $\ga \in (0,\infty)$. Then, there exists a positive constant $C$ such that, for all $f \in \lu{M}$ and $o \in M$,
\[
\| \cL^{-\ga} f \|_{L^1(B_{R}(o))} \leq C R^\ga \| f \|_{L^1} \quant R \geq 1.
\]
\end{proposition}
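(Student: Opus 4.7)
My plan is to exploit the heat semigroup representation
\[
\cL^{-\ga} = \frac{1}{\Gamma(\ga)} \ioty t^{\ga-1} \e^{-t\cL} \wrt t,
\]
whose integral kernel $k_\ga(x,y) = \Gamma(\ga)^{-1} \ioty t^{\ga-1} p_t(x,y) \wrt t$ (with $p_t$ the heat kernel of $M$) is symmetric and nonnegative. By positivity and symmetry of the kernel combined with Fubini's theorem,
\[
\bignorm{\cL^{-\ga} f}{L^1(B_R(o))} \le \int_M |f(y)| \, (\cL^{-\ga} \chi_{B_R(o)})(y) \wrt\mu(y) \le \bignorm{f}{1} \, \bignorm{\cL^{-\ga} \chi_{B_R(o)}}{\infty},
\]
so it suffices to prove $\bignorm{\cL^{-\ga} \chi_{B_R(o)}}{\infty} \le C R^\ga$ uniformly in $o \in M$ and $R \ge 1$.

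To control $\bignorm{\e^{-t\cL} \chi_{B_R(o)}}{\infty}$ I would combine two complementary bounds. For all $t > 0$, the Markov property of $\e^{-t\cL}$ (which holds by the stochastic completeness granted by the Ricci lower bound) gives $\bignorm{\e^{-t\cL} \chi_{B_R(o)}}{\infty} \le 1$. For $t \ge 1$, writing $\e^{-t\cL} = \e^{-\cL/2} \e^{-(t-1/2)\cL}$ and combining the on-diagonal bound $p_{1/2}(x,x) \le C$, valid on any manifold in $\cM_0$ and implying $\bigopnorm{\e^{-\cL/2}}{L^2 \to L^\infty} \le C$, with the spectral gap estimate $\bigopnorm{\e^{-s\cL}}{L^2 \to L^2} \le \e^{-bs}$, one obtains
\[
\bignorm{\e^{-t\cL} \chi_{B_R(o)}}{\infty} \le C \e^{-bt} \mu(B_R(o))^{1/2} \le C \e^{-bt + cR/2},
\]
where the volume bound $\mu(B_R(o)) \le C \e^{cR}$ comes from Bishop--Gromov comparison with the Ricci lower bound.

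Setting $T \defeq \max\{1, 2cR/b\}$, so that $T \le C'R$ for $R \ge 1$, and splitting the time integral at $T$, using the Markov bound on $[0,T]$ and the sharper bound on $[T,\infty)$, leads to
\[
\bignorm{\cL^{-\ga} \chi_{B_R(o)}}{\infty} \le \frac{1}{\Gamma(\ga)} \left[ \int_0^T t^{\ga-1} \wrt t + C \e^{cR/2} \int_T^\infty t^{\ga-1} \e^{-bt} \wrt t \right] \le C R^\ga + C R^{\ga-1} \e^{-3cR/2},
\]
since $bT \ge 2cR$ at the cross-over; this is bounded by $C R^\ga$ for all $R \ge 1$, as required.

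The essential (and only delicate) point is the cancellation, at the cross-over time $T \asymp R$, between the exponential volume growth $\e^{cR/2}$ coming from the Ricci lower bound and the exponential $L^2$-decay $\e^{-bT}$ of the heat semigroup coming from the spectral gap; it is exactly this interplay between the two standing hypotheses on the class $\cM$ that lets the polynomial bound $R^\ga$ prevail, whereas each ingredient in isolation would only yield an exponential bound in $R$.
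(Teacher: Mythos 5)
Your proof is correct and rests on exactly the same mechanism as the paper's: both split the subordination integral for $\cL^{-\ga}$ at a time $T \asymp R$, use a trivial bound for small times, and for large times play the spectral-gap decay $\e^{-bt}$ of the heat semigroup against the exponential volume growth of $B_R(o)$, with the same cross-over balancing. The only difference is organizational: you dualize to a uniform $L^\infty$ bound on $\cL^{-\ga}\chi_{B_R(o)}$ (via positivity and symmetry of the kernel, the sub-Markov property, and $L^2 \to L^\infty$ ultracontractivity), whereas the paper estimates $\cL^{-\ga} f$ directly using $L^1$-contractivity for small times and the $L^1 \to L^2$ ultracontractive bound plus Cauchy--Schwarz for large times --- equivalent ingredients by duality.
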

\begin{proof}
Recall that $\cH_t = \,e^{-t\cL}$ denotes the heat semigroup. By functional calculus, we can write, at least formally,
\[
\cL^{-\ga} = c_\ga \int_0^\infty t^\ga \cH_t \dtt{t} ,
\]
where $c_\ga := 1/\Gamma(\ga)$. As a consequence, for all $T > 0$,
\[
\cL^{-\ga} f = c_\ga \int_0^T t^\ga \cH_t f \dtt{t} + c_\ga \int_T^\infty t^\ga \cH_t f \dtt{t} =: f_T^0 + f_T^\infty.
\]
Now, as $\cH_t$ is a contractive semigroup on $\lu{M}$,
\[
\| f^0_T \|_{L^1} \leq c_\gamma \|f\|_{L^1} \int_{0}^T t^\ga \dtt{t} \leq C T^\ga \|f\|_{L^1},
\]
since $\ga > 0$. On the other hand, in light of the ultracontractive estimate
\[
\|\cH_t\|_{1 \to 2} \leq C \e^{-bt} \quant t\geq 1,
\]
where $b>0$ is the bottom of the $L^2$-spectrum of $\cL$ (see, e.g., \cite[eq.\ (2.4)]{MaMV1}), we deduce that
\[
\| f^\infty_T \|_{L^2} \leq C \|f\|_{L^1} \int_{T}^\infty t^{\ga-1} \e^{-bt} \wrt t \leq C T^{\ga-1} \e^{-bT} \|f\|_{L^1}
\]
for all $T \geq 1$. As $M$ has exponential volume growth (see, e.g., \cite[eq.\ (2.3)]{MaMV2}), there exists $\beta > 0$ such that
\[
\mu(B_R(o)) \leq C \e^{2\beta R} \quant R \geq 1
\]
and consequently, by the Cauchy--Schwarz inequality,
\[
\| f^{\infty}_T \|_{L^1(B_R(o))} \leq C \e^{\beta R} \| f^{\infty}_T \|_{L^2} \leq C T^{\ga-1} \e^{\beta R - bT}.
\]
In conclusion,
\[
\|\cL^{-\ga} f \|_{L^1(B_R(o))} \leq C T^{\ga} \left[ 1 + T^{-1} \e^{\beta R - bT} \right]
\]
for all $R ,T \geq 1$, and the desired estimate follows by taking $T = (1+\beta/b) R$.
\end{proof}

On the basis of the previous estimate, the Riesz transform $\cR f = \nabla \cL^{-1/2} f$ of any function $f \in \lu{M}$ is well-defined at least in the sense of distributions.
We can then define the Riesz--Hardy space $\huR{M}$ as described in \eqref{f: rieszhardy} in the Introduction, and equip it with the norm 
$\norm{f}{\lu{M}}+\bignorm{\mod{\cR f}}{\lu{M}}$.

\subsection{The Federer--Fleming inequality}
As we shall see, $\huR{M}$ can be characterised as one of the spaces $\gXga{M}$ introduced above. An important tool in this characterisation is the validity of the following $L^1$ Sobolev inequality, also known as the Federer--Fleming inequality \cite{Bu,Ch}, which is again a consequence of the spectral gap assumption.

\begin{proposition}\label{p: FF}
Let $M$ be a manifold in the class $\cM$. There exists a positive constant $\kappa$ such that, if $h \in L^1_\loc(M)$ satisfies the estimate
\begin{equation}\label{f: apriori}
\| h \|_{L^1(B_R(o))} \leq C R    \quant R \geq 1
\end{equation}
for some $o \in M$ and $C > 0$, and moreover $|\nabla h| \in \lu{M}$, then $h \in \lu{M}$ and
\begin{equation}\label{f: FF}
\| h \|_{L^1} \leq \kappa \, \| |\nabla h| \|_{L^1}.
\end{equation}
\end{proposition}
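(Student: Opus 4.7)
My plan is to derive \eqref{f: FF} from the Cheeger isoperimetric inequality on $M$ combined with a cutoff-and-bootstrap argument. Under the standing hypotheses on $M$ (Ricci lower bound and spectral gap), the Cheeger isoperimetric constant
\[
h_M \defeq \inf \bigl\{ \sigma(\partial \Omega)/\mu(\Omega) : \Omega \subseteq M \text{ open and relatively compact} \bigr\}
\]
is strictly positive, as a consequence of Buser's inequality, which, under a Ricci lower bound, controls the bottom of the $L^2$-spectrum from above by a constant multiple of $h_M + h_M^2$. The classical Federer--Fleming coarea argument then yields the $L^1$ Sobolev inequality
\[
\| f \|_1 \leq h_M^{-1} \, \| |\nabla f| \|_1
\]
for all $f \in C_c^\infty(M)$, and, by standard approximation, for all compactly supported $f \in W^{1,1}(M)$. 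I would then set $\kappa \defeq h_M^{-1}$, which provides the constant appearing in \eqref{f: FF}.

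With this in hand, it remains to extend the inequality to any $h$ satisfying \eqref{f: apriori}. I may assume $h \geq 0$, by replacing $h$ with $|h|$ and using $|\nabla |h|| \leq |\nabla h|$ a.e.\ (the chain rule for Sobolev functions). Pick a smooth cutoff $\chi_R$ with $\chi_R \equiv 1$ on $B_R(o)$, $\chi_R \equiv 0$ outside $B_{2R}(o)$, and $|\nabla \chi_R| \leq 2/R$. Since $h \chi_R$ is a compactly supported $W^{1,1}(M)$ function, the Sobolev inequality together with the Leibniz rule gives
\[
\int_M h \, \chi_R \wrt\mu \leq \kappa \int_M \chi_R \, |\nabla h| \wrt\mu + \kappa \int_M h \, |\nabla \chi_R| \wrt\mu.
\]
The second term on the right is at most $(2\kappa/R) \, \|h\|_{L^1(B_{2R}(o))} \leq 4 C \kappa$, uniformly in $R \geq 1$, by \eqref{f: apriori}. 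Sending $R \to \infty$ and invoking monotone convergence on the left yields $h \in \lu{M}$ with $\|h\|_1 \leq \kappa \, \| |\nabla h| \|_1 + 4 C \kappa$.

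The final step is a bootstrap that eliminates the additive constant. Once $h \in \lu{M}$ is established, dominated convergence gives $\|h\|_{L^1(B_{2R}(o) \setminus B_R(o))} \to 0$ as $R \to \infty$, so the second term on the right-hand side of the cutoff estimate above tends to $0$. Letting $R \to \infty$ a second time yields the clean bound $\|h\|_1 \leq \kappa \, \| |\nabla h| \|_1$ stated in \eqref{f: FF}. The point to watch is that the linear growth hypothesis \eqref{f: apriori} is precisely matched by the natural $1/R$ decay of $|\nabla \chi_R|$, so a one-shot cutoff argument cannot produce the clean inequality; the bootstrap is what converts the preliminary linear $L^1$ growth into the genuine $L^1$ integrability needed to exploit the smallness of $|\nabla \chi_R|$ in the far field.
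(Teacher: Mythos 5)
Your proof is correct, and its skeleton coincides with the paper's: positivity of the Cheeger constant (via Buser's inequality and the spectral gap), the coarea/Federer--Fleming argument for compactly supported smooth functions, and a cutoff $\chi_R$ whose $1/R$ gradient bound is played off against the linear growth hypothesis \eqref{f: apriori}. The one place where you genuinely diverge is the endgame. The paper first upgrades \eqref{f: FF} to \emph{all} of $W^{1,1}(M)$ by invoking the density of $C^\infty_c(M)$ in $W^{1,1}(M)$ on a complete manifold (a nontrivial fact, cited from Hebey and Cowling--Martini); the cutoff argument then only has to produce $h \in \lu{M}$, after which the clean inequality is immediate from $h \in W^{1,1}(M)$. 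You instead use the Sobolev inequality only for \emph{compactly supported} $W^{1,1}$ functions (which needs only local mollification) and recover the clean constant by running the cutoff estimate a second time, using the newly established integrability of $h$ to make the term $\int_M h\,|\nabla\chi_R|\wrt\mu$ vanish as $R \to \infty$. Your bootstrap thus trades the global density theorem for one extra pass of the same computation; both routes are valid, and yours is marginally more self-contained, at the cost of the (correctly identified) extra step needed to kill the additive constant $4C\kappa$. The reduction to $h \geq 0$ is harmless and correctly justified, and is what legitimises your appeal to monotone convergence.
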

\begin{proof}
As the manifold $M$ is in the class $\cM$, the Cheeger isoperimetric constant of $M$ is strictly positive (see \cite{Bu} and \cite[Theorem 9.5]{CMM1}), whence the Federer--Fleming inequality \eqref{f: FF} is known to hold for all $h \in C^\infty_c(M)$ (see \cite[Theorem V.2.1, p.\ 131]{Ch}, and also \cite[eq.\ (2.6)]{MaMV1}). On the other hand, 
as $M$ is complete,
 the space $C^\infty_c(M)$ is dense in the Sobolev space
\[
W^{1,1}(M) = \{ f \in \lu{M} : |\nabla f| \in \lu{M} \}
\]
(see, e.g., \cite[Theorem 2.7]{He} or \cite[Proposition 5.5]{CMa}), thus the inequality \eqref{f: FF} holds for all $h \in W^{1,1}(M)$. So it just remains to show that, if $h \in L^1_\loc(M)$ is only assumed to satisfy the a-priori estimate \eqref{f: apriori} and $|\nabla h| \in \lu{M}$, then actually $h \in \lu{M}$.

To this purpose, let us consider, for all $R \geq 1$, a compactly supported Lipschitz cutoff $\psi_R : M \to [0,1]$ which is identically $1$ on $B_R(o)$, vanishes outside $B_{2R}(o)$, and satisfies
\begin{equation}\label{f: cutoff}
\| |\nabla \psi_R| \|_\infty \leq C R^{-1} ;
\end{equation}
the existence of such cutoffs is a consequence of the completeness of $M$ (see, e.g., \cite[Lemma 2.2]{strichartz}). 

Let now $h$ satisfy the assumptions of Proposition \ref{p: FF}. As $h \in L^1_{\loc}(M)$, we deduce that $\psi_R h$ is  in $L^1(M)$. Moreover, as $|\nabla h| \in L^1(M)$, by the Leibniz rule,
\[
|\nabla (\psi_R h)| \leq  \psi_R |\nabla h| + |\nabla \psi_R| |h|,
\]
so $|\nabla (\psi_R h)|$ is in $\lu{M}$ too, with
\begin{equation}\label{f: leibniz_est}
\begin{split}
\||\nabla (\psi_R h)|\|_1 &\leq  \||\nabla h|\|_1 + C R^{-1} \|h\|_{L^1(B_{2R}(o))} \\
&\leq \||\nabla h|\|_1 + C ;
\end{split}
\end{equation}
in the first inequality we used \eqref{f: cutoff} and information on the support of $\psi_R$, while the second inequality follows from \eqref{f: apriori}.

As $\psi_R h \in W^{1,1}(M)$, we can then apply \eqref{f: FF} to $\psi_R h$ and obtain that
\[
\| \psi_R h \|_1 \leq \kappa \| |\nabla(\psi_R h)| \|_1 \leq \kappa \| |\nabla h| \|_1 + C
\]
for all $R \geq 1$. If we take the limit as $R \to +\infty$, then by monotone convergence we deduce that $h \in \lu{M}$, as required.
\end{proof}

\subsection{Characterisation of the Riesz--Hardy space}
In view of Proposition~\ref{p: hu ghu Rn}, the following characterisation of $\huR{M}$ highlights 
an interesting noneuclidean phenomenon.

\begin{theorem} \label{t: char Riesz}
Let $M$ be a manifold in the class $\cM$.  Then $\huR{M}=\gXum{M}$, with equivalence of norms.
\end{theorem}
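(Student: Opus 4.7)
The plan is to establish the two inclusions $\gXum{M}\subseteq\huR{M}$ and $\huR{M}\subseteq\gXum{M}$ with equivalent norms, combining Proposition~\ref{p: improvement tau} (which identifies $\ghu{M}$ with $\ghuRtau{M}$ for $\tau=1$) with the extended-Dunford functional calculus for the sectorial operator $\cL$ on $L^1(M)$ recalled at the start of this section.

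For the direction $\gXum{M}\subseteq\huR{M}$, write $f=\cU^{1/2}g$ with $g\in\ghu{M}$. Since $z^{1/2}/(1+z)^{1/2}\in\cE(\bS_\theta)$ for $\theta\in(\pi/2,\pi)$ by Lemma~\ref{l: Dunford class}, the operator $\cU^{1/2}$ is bounded on $L^1(M)$, hence $f\in L^1(M)$. Moreover, $\cL^{-1/2}\cU^{1/2}$ and $(\cI+\cL)^{-1/2}$ agree at the symbol level, both being bounded $L^1$-operators with symbol $1/(1+z)^{1/2}\in\cE(\bS_\theta)$; so $\cR f=\cR\cU^{1/2}g=\nabla(\cI+\cL)^{-1/2}g=\cR_1 g$, which belongs to $L^1(M)$ by Proposition~\ref{p: improvement tau}. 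The bound $\|f\|_{\huR{M}}\leq C\|g\|_{\ghu{M}}$ follows.

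For the converse $\huR{M}\subseteq\gXum{M}$, given $f\in\huR{M}$ set $h\defeq\cL^{-1/2}f$. By Proposition~\ref{p: apriori}, $h\in L^1_\loc(M)$ with $\|h\|_{L^1(B_R(o))}\leq CR^{1/2}\|f\|_1\leq CR\|f\|_1$ for $R\geq 1$; since $\nabla h=\cR f\in L^1(M)$ by hypothesis, the Federer--Fleming inequality (Proposition~\ref{p: FF}) gives $h\in L^1(M)$ with $\|h\|_1\leq\kappa\|\cR f\|_1$. The difference $(\cI+\cL)^{1/2}-\cL^{1/2}$ coincides with $[(\cI+\cL)^{1/2}+\cL^{1/2}]^{-1}$, whose symbol $1/((1+z)^{1/2}+z^{1/2})$ is bounded and holomorphic on $\bS_\theta$ with limits $1$ at $0$ and $0$ at $\infty$, both attained at rate $|z|^{1/2}$, hence it lies in $\cE(\bS_\theta)$; therefore this difference is bounded on $L^1(M)$. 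Accordingly,
\[
g\defeq f+\bigl[(\cI+\cL)^{1/2}-\cL^{1/2}\bigr]h\in L^1(M).
\]

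Two identities of bounded operators on $L^1(M)$, each checked by comparing symbols in the extended Dunford class, close the argument: $(\cI+\cL)^{-1/2}[(\cI+\cL)^{1/2}-\cL^{1/2}]=\cI-\cU^{1/2}$ and $\cU^{1/2}\cL^{-1/2}=(\cI+\cL)^{-1/2}$. Applying the first identity to $g$ gives
\[
(\cI+\cL)^{-1/2}g=(\cI+\cL)^{-1/2}f+(\cI-\cU^{1/2})h=(\cI+\cL)^{-1/2}f+h-(\cI+\cL)^{-1/2}f=h,
\]
so $\cR_1 g=\nabla h=\cR f\in L^1(M)$; hence $g\in\ghu{M}$ by Proposition~\ref{p: improvement tau}. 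An analogous symbol-level computation, using that $\cU^{1/2}[(\cI+\cL)^{1/2}-\cL^{1/2}]\cL^{-1/2}=\cI-\cU^{1/2}$ as bounded $L^1$-operators, yields $\cU^{1/2}g=\cU^{1/2}f+(\cI-\cU^{1/2})f=f$, so $f=\cU^{1/2}g\in\gXum{M}$, and combining all the above bounds delivers $\|f\|_{\gXum{M}}\asymp\|f\|_{\huR{M}}$. The main obstacle is that the formal preimage $\cU^{-1/2}f=(\cI+\cL)^{1/2}\cL^{-1/2}f$ involves the unbounded operator $\cL^{-1/2}$ on $L^1(M)$; the key trick is to realise the apparently unbounded combination $(\cI+\cL)^{1/2}-\cL^{1/2}$ as the bounded operator $[(\cI+\cL)^{1/2}+\cL^{1/2}]^{-1}$, which—together with the Federer--Fleming control of $h=\cL^{-1/2}f$—produces an explicit $L^1(M)$ representative of the preimage amenable to extended-Dunford manipulations.
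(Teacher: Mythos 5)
Your proof is correct and follows essentially the same route as the paper: both directions hinge on Proposition~\ref{p: improvement tau}, and the reverse inclusion uses Proposition~\ref{p: apriori} plus the Federer--Fleming inequality to place $h=\cL^{-1/2}f$ in $L^1(M)$ before recovering $g=\cU^{-1/2}f$ through a bounded extended-Dunford operator. The only difference is cosmetic: you isolate the unbounded part via $\cU^{-1/2}=\cI+\bigl[(\cI+\cL)^{1/2}-\cL^{1/2}\bigr]\cL^{-1/2}$ with symbol $(1+z)^{1/2}-z^{1/2}=\bigl((1+z)^{1/2}+z^{1/2}\bigr)^{-1}$, whereas the paper uses the factorisation $\cU^{-1/2}=\vp(\cL)(\cI+\cL^{-1/2})$ with $\vp(z)=(1+z)^{1/2}/(1+z^{1/2})$; both reduce to the same scheme.
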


This characterisation substantially refines the result in \cite[Theorem~5.3]{MaMV1}, where $\gXum{M}$ is identified with the space $\{ f \in \frh^1(M) : |\cR f| \in L^1(M) \}$. While a number of ideas from the proof of \cite[Theorem~5.3]{MaMV1} also apply here, for the reader's convenience we opt to provide a self-contained proof of Theorem \ref{t: char Riesz} below. Beside favouring the readability of the argument, this also gives us the opportunity to clarify why the Federer--Fleming inequality is applicable in this context, a point that was somewhat skimmed over in the proof presented in \cite{MaMV1}.

\begin{proof}
The proof of the inclusion $\gXum{M} \subseteq \huR{M}$ is analogous to that of the inclusion $\cU^{1/2}[\frh^1(\BR^n)] \subseteq \huR{\BR^n}$ given in Proposition \ref{p: hu ghu Rn}. Indeed, if $f$ is in $\gXum{M}$, then $f$ is in $\frh^1(M)$, thus in $\lu{M}$, and moreover $f = \cU^{1/2} g$ for some $g \in \frh^1(M)$. In addition,
\begin{equation}\label{f: riesz_local}
\cR f
= \nabla \cL^{-1/2} f
	= \nabla (\cI+\cL)^{-1/2} \cU^{-1/2} f
= \cR_1 g.
\end{equation}
So from Proposition \ref{p: improvement tau} and the fact that $g \in \frh^1(M)$ we deduce that $\cR f \in \lu{M}$, and in conclusion $f \in \huR{M}$.

As for the opposite inclusion, if $f$ is in $\huR{M}$, then $f$ and $\bigmod{\cR f}$ are in $\lu{M}$.
If $h := \cL^{-1/2} f$, then, from Proposition \ref{p: apriori}, we know that $h \in L^1_\loc(M)$ and $\|h\|_{L^1(B_R(o))} \leq C R^{1/2}$ for all $R \geq 1$. Moreover $\nabla h = \cR f$, thus $|\nabla h| \in \lu{M}$. We can then apply Proposition \ref{p: FF} to $h$ and deduce that $h = \cL^{-1/2} f$ is in $\lu{M}$.

Notice now that
\begin{equation}\label{f: radiceinversa}
\cU^{-1/2} =  (\cI + \cL)^{1/2} \cL^{-1/2}  = \vp(\cL) (\cI + \cL^{-1/2}),
\end{equation}
where 
\[
\vp(z) := \frac{(1+z)^{1/2}}{1+z^{1/2}}.
\]
It is easily seen that that $\vp$ is in $\cE(\bS_\theta)$ for all $\theta \in (\pi/2,\pi)$: indeed, by parts \ref{en:dunford_algebra} and \ref{en:dunford_examples} of Lemma \ref{l: Dunford class}, the reciprocal function
\[
\frac{1}{\vp(z)} = \frac{1}{(1+z)^{1/2}} + \frac{z^{1/2}}{(1+z)^{1/2}}
\]
is in $\cE(\bS_\theta)$, thus $\vp$ is too by part \ref{en:dunford_reciprocal} of the same Lemma. So $\vp(\cL)$ is bounded on $\lu{M}$ by \cite[Theorem 2.3.3]{Haa}. From \eqref{f: radiceinversa} we then deduce that the function
\[
g := \cU^{-1/2} f = \vp(\cL) (f + \cL^{-1/2} f)
\]
is in $\lu{M}$, as $f$ and $\cL^{-1/2} f$ are.
Moreover, as in \eqref{f: riesz_local}, $\cR f= \cR_1 g$. 
Therefore both $g$ and $|\cR_1 g|$ are in $L^1(M)$, that is, $g \in \frh^1_{\cR_1}(M)$. However, by Proposition \ref{p: improvement tau}, this is equivalent to saying that $g \in \ghu{M}$, and consequently $f = \cU^{1/2} g \in \gXum{M}$, by definition.
\end{proof}

As a consequence of the above characterisation, in stark contrast to the Euclidean case, we can rule out that $\huR{M}$ can be characterised in terms of atomic decompositions, at least for a  subclass of the manifolds of class $\cM$.

\begin{corollary} \label{c: char Riesz}
Suppose that $M$ is a rank one symmetric space of the noncompact type.   Then
the Riesz--Hardy space $\huR{M}$ does not admit an atomic characterisation. More precisely, the set of the compactly supported elements of $\huR{M}$ is not dense in $\huR{M}$.
\end{corollary}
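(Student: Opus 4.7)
By Theorem~\ref{t: char Riesz}, $\huR{M} = \gXum{M} = \cU^{1/2}[\ghu{M}]$, with equivalent norms and continuous embedding in $L^1(M)$. The plan is to exhibit a continuous linear functional $\ell$ on $\huR{M}$ that does not vanish identically but annihilates every compactly supported element, thereby ruling out density (and in particular any atomic characterisation with compactly supported atoms).

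The natural candidate is $\ell(f) \defeq \int_M f\, \phi_0 \wrt\mu$, where $\phi_0$ denotes the bi-$K$-invariant spherical function of $M$ at spectral parameter zero: a smooth, strictly positive, bounded eigenfunction of $\cL$ with eigenvalue $b > 0$ (the bottom of the $L^2$-spectrum), with asymptotic $\phi_0(x) \asymp d(x,o)\, \e^{-\rho d(x,o)}$ as $d(x,o) \to \infty$, so $\phi_0 \in L^\infty(M) \setminus L^1(M)$. Boundedness of $\phi_0$ makes $\ell$ continuous on $L^1(M)$, hence on $\huR{M}$. To produce $f_* \in \huR{M}$ with $\ell(f_*) \neq 0$, take a global $\ghu{M}$-atom $g \defeq \mu(B_1(o))^{-1} \chi_{B_1(o)}$ and set $f_* \defeq \cU^{1/2} g \in \gXum{M} = \huR{M}$; the pointwise identity $\cH_t \phi_0 = \e^{-tb} \phi_0$ together with $L^1$--$L^\infty$ duality, transported through the heat-semigroup subordination for $\cU^{1/2} = \cL^{1/2}(\cI+\cL)^{-1/2}$, yields $\ell(f_*) = \Psi(b) \int_M g\,\phi_0 \wrt\mu > 0$, where $\Psi(z) \defeq (z/(1+z))^{1/2}$.

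The nontrivial step is to verify that $\ell(f) = 0$ for every compactly supported $f \in \huR{M}$. By Proposition~\ref{p: FF} (Federer--Fleming), $\cL^{-1/2} f \in L^1(M)$ for every $f \in \huR{M}$. On a rank one symmetric space of the noncompact type, the kernel of $\cL^{-1/2}$ is bi-$K$-invariant, $K_{1/2}(x,y) = k(d(x,y))$, and spherical analysis (e.g., saddle-point applied to $\cL^{-1/2} = \pi^{-1/2}\ioty t^{-1/2}\cH_t \wrt t$, or the sharp asymptotic of the Harish--Chandra $c$-function) yields $k(r) \asymp r^\alpha \, \e^{-2\rho r}$ as $r \to \infty$ for a specific $\alpha$. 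Combined with the Busemann asymptotic $d(x,y) = d(x,o) + \beta_{\hat x_\infty}(y) + o(1)$ as $d(x,o) \to \infty$, this gives the sharp leading behaviour
\[
\cL^{-1/2} f(x) \sim k(d(x,o)) \int_M f(y)\, \e^{-2\rho \beta_{\hat x_\infty}(y)} \wrt\mu(y)
\]
as $d(x,o) \to \infty$. Since the volume density grows like $\e^{2\rho d(x,o)}$, the integrability $\cL^{-1/2} f \in L^1(M)$ forces the leading angular profile to vanish; combined with the Harish--Chandra spherical integral representation of $\phi_0$ and the spectral identity $\int (\cL^{-1/2} f)\phi_0 \wrt\mu = b^{-1/2} \int f \phi_0 \wrt\mu$, this translates into $\ell(f) = 0$.

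Putting the two steps together, continuity of $\ell$ on $\huR{M}$ together with $\ell(f_*) \neq 0$ and $\ell|_{C_c(M)\cap\huR{M}} = 0$ rules out approximation of $f_*$ in $\huR{M}$-norm by compactly supported elements; hence the latter are not dense. The main obstacle is the third paragraph: making rigorous the chain from $L^1$-integrability of $\cL^{-1/2} f$ at infinity to the vanishing of the spherical mean $\int f\phi_0 \wrt\mu$. This relies on the Harish--Chandra theory of spherical functions and the sharp resolvent/heat kernel asymptotics available on rank one symmetric spaces, which is why the proof is restricted to this class of manifolds.
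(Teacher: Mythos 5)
Your overall strategy --- exhibit a bounded linear functional on $\huR{M}$ that annihilates every compactly supported element without vanishing identically --- is the right shape of argument for a non-density statement, but the functional you chose, $\ell(f)=\int_M f\,\phi_0\wrt\mu$, does not have the annihilation property, so the proof collapses at its central step. Concretely, take $\psi\in C^\infty_c(M)$ nonnegative and nontrivial and set $f=\cL\psi$. Then $f$ is compactly supported ($\cL$ is a differential operator) and lies in $\huR{M}$: by Proposition~\ref{p: cinftyc}~\ref{en:cinftyc_cinftyc} one has $\cL\psi\in\gXu{M}$, which is contained in $\gXum{M}=\huR{M}$ by Proposition~\ref{p: cinftyc}~\ref{en:cinftyc_decr} and Theorem~\ref{t: char Riesz}. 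Yet, integrating by parts and using $\cL\phi_0=b\,\phi_0$,
\[
\ell(\cL\psi)=\int_M \psi\,\cL\phi_0\wrt\mu=b\int_M\psi\,\phi_0\wrt\mu>0 ,
\]
so $\ell$ does not vanish on the compactly supported part of $\huR{M}$. The error originates in your third paragraph: the cancellation forced on a compactly supported $f$ by the requirement $\cL^{-1/2}f\in\lu{M}$ comes from the singularity of $\la\mapsto(\la^2+b)^{-1/2}$ at $\la=\pm iQ/2$, i.e.\ at the spectral parameter where $\phi_{\pm iQ/2}\equiv 1$ (Lemma~\ref{l: sphestimates}~\ref{en:sphestimates_sphone}), not at $\la=0$. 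The ``angular profiles'' $y\mapsto\e^{-2\rho\beta_{\hat x_\infty}(y)}$ in your asymptotic are generalized eigenfunctions with eigenvalue $0$, whose spherical average is the constant $1$, not $\phi_0$ (which has eigenvalue $b$). The integrated functional your argument actually produces is $f\mapsto\int_M f\wrt\mu$, and this vanishes on \emph{all} of $\huR{M}=\cU^{1/2}[\ghu{M}]$ (formally because $(z/(1+z))^{1/2}$ vanishes at $z=0$), so it separates nothing; the genuine obstruction lives in the non-averaged, direction-dependent conditions, and turning those into a bounded nontrivial functional on $\gXum{M}$ is exactly the nontrivial content you would need to supply.

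For comparison, the paper's proof is a two-line reduction: it invokes \cite[Theorem~4.15]{MaMV1}, which already establishes that $\gXum{M}$ admits no atomic decomposition (equivalently, that its compactly supported elements are not dense) on rank one symmetric spaces of the noncompact type, and combines it with the identification $\huR{M}=\gXum{M}$ from Theorem~\ref{t: char Riesz}. A self-contained argument along your lines would have to reconstruct the functional used there; $\int_M f\,\phi_0\wrt\mu$ is not it.
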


\begin{proof}
According to \cite[Theorem~4.15]{MaMV1}, the space $\gXum{M}$ does not admit an atomic decomposition.
Since, by Theorem~\ref{t: char Riesz}, $\huR{M}$ agrees with $\gXum{M}$, the conclusion follows.
\end{proof}

\section{Maximal operators and function spaces for the class \texorpdfstring{$\cM$}{M}}
\label{s: Maximal}

In this section we consider various maximal operators and function spaces associated thereto on any manifold $M$ in the class $\cM$.
We collect here their definitions and some of their elementary properties.

\subsection{Maximal operators and associated Hardy type spaces}
Recall that $\cH_t=\e^{-t\cL}$ and that $\cP_t=\e^{-t\cL^{1/2}}$: thus, 
$\{\cH_t\}$ and $\{\cP_t\}$ denote the heat and the Poisson semigroups on $M$, respectively.  
For every $c\in \BR$ we denote by $\cH^c_*$ the \emph{weighted heat maximal operator}, defined by 
\[
\cH^c_* f 
= \sup_{t>0} \, \langle t\rangle^c \, \bigmod{\cH_t f}\,,
\]
where $\langle t\rangle =\max\{1,t\}$. 
When $c=0$ we simply denote by $\cH_*$ the maximal operator $\cH_*^0$. We denote by $H^1_{\cH,c}$ the space
\[
H^1_{\cH,c}=\{f\in L^1(M):  \cH^c_* f \in L^1(M)\}\,,
\]
endowed with the norm 
\[
\|f\|_{H^1_{\cH,c}}=\|f\|_{L^1}+ \|\cH^c_* f \|_{L^1}\,.
\]
Clearly $\{H^1_{\cH,c}(M): c\in \BR\}$ is a decreasing family of spaces.     
We denote by $\maxHloc$ 
the \emph{local heat maximal operator}, defined by 
\[
\maxHloc f 
= \sup_{0<t\leq 1} \bigmod{\cH_t f}
\,.
\]
We define the \emph{local heat Hardy space} as 
\[
\frh^1_{\cH}(M)=\{f\in L^1(M) : \maxHloc f \in L^1(M)\}\,,
\]
endowed with the norm 
\[
\|f\|_{\frh^1_{\cH}}=\|f\|_{L^1}+\|\maxHloc f\|_{L^1}\,.
\]
Much in the same vein,  for any real number $c$ in $\BR$ we consider the \emph{Poisson maximal operator $\maxPc$ with parameter $c$}, which acts on 
a function $f$ on $M$ by
\[
\maxPc f 
= \sup_{t>0} \,  \langle t\rangle^c \, \bigmod{\cP_tf}.
\]
We shall write $\maxP$ instead of $\maxP^0$.  We then define $\huPc{M}$ by 
\[
\huPc{M}
= \{f \in \lu{M}: \maxPc f \in \lu{M} \}.  
\]
We equip $\huPc{S}$ with the norm 
\[
\bignorm{f}{H^1_{\cP,c} }
= \bignorm{f}{L^1} + \bignorm{\maxPc f}{L^1}. 
\]
We denote by $\maxPloc$ 
 the \emph{local Poisson maximal operator}, defined by 
\[
\maxPloc f 
= \sup_{0<t\leq 1} \bigmod{\cP_t f}
\,.
\]
We define the \emph{local Poisson Hardy space} as 
\[
\frh^1_{\cP}(M)=\{f\in L^1(M) : \maxPloc f\in L^1(M)\}\,,
\]
endowed with the norm 
\[
\|f\|_{\frh^1_{\cP}}=\|f\|_{L^1}+\|\maxPloc f\|_{L^1}\,.
\]
We shall need the following result, which relates $\frh^1_{\cH}(M)$ and $\frh^1_{\cP}(M)$ with the Goldberg type
space $\ghu{M}$, and is valid more generally for any manifold $M$ in the class $\cM_0$.

\begin{proposition}\label{p: H1Hloc}
The spaces $\frh^1(M)$, $\frh^1_{\cH}(M)$ and $\frh^1_{\cP}(M)$ 
coincide and their norms are mutually equivalent. 
\end{proposition}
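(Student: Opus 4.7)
The plan is that Proposition~\ref{p: H1Hloc} is essentially a restatement of results already obtained in \cite{MaMV2}, where the local Hardy space $\ghu{M}$ on a manifold in the class $\cM_0$ is characterised both via the local heat maximal operator $\maxHloc$ and via the local Poisson maximal operator $\maxPloc$. Thus the identifications $\ghu{M} = \frh^1_{\cH}(M) = \frh^1_{\cP}(M)$, with equivalence of the corresponding norms, follow directly by invoking those characterisations.

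For the reader's convenience I would nonetheless sketch how the two maximal characterisations connect to one another, since passing from one to the other does not require the full atomic machinery. The inclusion $\frh^1_{\cH}(M) \subseteq \frh^1_{\cP}(M)$ can be obtained by subordination: from the identity
\[
\cP_t f = \frac{t}{2\sqrt{\pi}} \int_0^\infty s^{-3/2} \e^{-t^2/(4s)} \cH_s f \wrt s,
\]
one splits the integral at $s = 1$, estimating the portion with $s \leq 1$ in terms of $\maxHloc f$ (using that the subordination density integrates to a constant uniform in $t \in (0,1]$), and the tail portion with $s > 1$ by $\int_1^\infty s^{-3/2} \bigmod{\cH_s f} \wrt s$, whose $L^1$ norm is controlled by a multiple of $\norm{f}{L^1}$ thanks to the $L^1$-contractivity of $\cH_s$.

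The opposite inclusion $\frh^1_{\cP}(M) \subseteq \frh^1_{\cH}(M)$ is considerably harder to derive directly, since the subordination formula is not easily inverted. In practice one obtains it, together with the identification of both spaces with $\ghu{M}$, through the atomic decomposition of $\ghu{M}$ recalled earlier in Section~\ref{s: Hardy--Riesz space}: it suffices to verify uniform $L^1$ bounds for $\maxHloc$ and $\maxPloc$ on standard and global $\ghu{M}$-atoms, together with the converse estimates producing an atomic decomposition starting from a function for which the relevant maximal function lies in $L^1$.

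This is precisely the main obstacle of the argument and the substantive content of \cite{MaMV2}: it requires sharp off-diagonal estimates for the heat and Poisson kernels at local scales, together with appropriate cancellation exploited at scales near and beyond the injectivity radius. These estimates are secured by the positive injectivity radius and the lower Ricci bound defining the class $\cM_0$, and allow one to treat standard and global atoms in a unified fashion, thereby closing the circle of inclusions.
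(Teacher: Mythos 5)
Your proposal is correct and matches the paper's approach: the paper's entire proof is a citation of \cite[Corollary~5.4]{MaMV2}, which is exactly the reduction you identify in your first paragraph. The additional subordination sketch you provide is a reasonable (and accurate) elaboration, but it is not part of the paper's argument, which delegates everything to the cited reference.
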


\begin{proof}
See \cite[Corollary~5.4]{MaMV2}.
\end{proof}

\subsection{Relationships between \texorpdfstring{$H^1_{\cH,c}(M)$}{H1Hc(M)}, \texorpdfstring{$H^1_{\cP,c}(M)$}{H1Pc(M)} and \texorpdfstring{$\gXga{M}$}{Xgamma(M)}}  
In this subsection we establish some inclusions between $H^1_{\cH,c}(M)$, $H^1_{\cP,c}(M)$ and $\gXga{M}$, whenever $M$
belongs to the class $\cM$ defined in the Introduction.

It is well known that the the Poisson semigroup $\cP_t$ can be subordinated to the heat semigroup $\cH_t$ via the formula
\begin{equation} \label{f: sub Poisson}
\cP_t = t \ioty \frac{\e^{-t^2/(4s)}}{\sqrt{4\pi s}} \, \cH_s \dtt s
\end{equation} 
(see, for instance, \cite[formula (2), p.~260]{Y} or \cite[formula (*), p.~47]{St1}).

One of the results we want to prove in this generality is that $\gXum{M} \subseteq  \huPc{M}$ for all $c<1$.  This will
generalise a result of Anker~\cite{A1}, who proved, in the case where $M$ is a Riemannian symmetric space of the noncompact type, that for every $c <1$ there exists a constant $C$ such that 
\begin{equation} \label{f: Anker maximal}
\bignorm{\maxPc f}{L^1}
\leq C \, \big(\bignorm{f}{L^1} + \bignorm{\mod{\cR f}}{L^1} \big),
\end{equation}
where $\cR$ denotes the Riemannian Riesz transform.  

From Proposition \ref{p: cinftyc}~\ref{en:cinftyc_car} we know that a function $f$ belongs to $\gXga{M}$ if and only if both $f$ and $\cL^{-\ga}f$
belong to $\ghu{M}$.  In order to prove the inclusion $\gXum{M} \subseteq  \huPc{M}$ for $c<1$, and more generally that 
$\gXga{M} \subseteq \huPc{M}$ for $c <2\ga$ it is natural to write $\cP_t f$ as $\cL^{\ga} \cP_t \cL^{-\ga}f$, and
to look for estimates for the operator $\cL^{\ga} \cP_t$.  
We can obtain subordination formulae for the operators $\cL^{\gamma} \cP_t$ in terms of the heat semigroup similar to 
formula \eqref{f: sub Poisson} for $\cP_t$, as shown by taking $\alpha=1/2$ in the following technical lemma.  

\begin{lemma} \label{l: est Lpt}
For $\al \in (0,1)$ and $\ga \geq 0$, let $F_{\alpha,\gamma} : (0,\infty) \to \BC$ be defined by contour integration as follows:
\begin{equation} \label{f: Fgamma def}
F_{\al,\ga}(s) = \frac{1}{2\pi i} \int_{\sigma-i\infty}^{\sigma+i\infty} s \, \e^{sz-z^{\al}} z^{\ga} \wrt z
\end{equation}
for any $\sigma > 0$. 
	\begin{enumerate}[label=(\roman*)]
		\item\label{en:est_Lpt_pointwise}
			There exist constants $C,\rho>0$ (possibly depending on $\alpha$ and $\gamma)$ such that
			\begin{equation} \label{f: Fgamma est}
			|F_{\al,\ga}(s)| \leq C s^{-\ga} \, \e^{-2\rho s^{-\al/(1-\al)}}.
			\end{equation}
			Moreover
			\begin{equation} \label{f: laplace tr}
			z^{\ga} \e^{-z^{\al}} = \ioty F_{\al,\ga}(s) \, \e^{-sz} \dtt s
			\end{equation}
			for every complex number $z$ with $\Re z > 0$.
		\item\label{en:est_Lpt_max}
			For all real numbers $c<\ga/\al$,
			\[
			\ioty \sup_{t\geq 1} \, t^c 
          		\left|  t^{-\ga/\al} F_{\al,\ga}(s/t^{1/\al}) \right|  \dtt s < \infty.
			\]
	\end{enumerate}
\end{lemma}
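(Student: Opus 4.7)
The plan is to derive (i) by a saddle-point analysis of the contour integral \eqref{f: Fgamma def} and then deduce (ii) from the resulting pointwise bound. As a preliminary observation, since $\alpha \in (0,1)$, on any vertical line $\Re z = \sigma > 0$ one has $\Re(z^\alpha) \geq |\Im z|^\alpha \cos(\alpha\pi/2) > 0$, so the integrand in \eqref{f: Fgamma def} decays super-polynomially at infinity and the integral converges absolutely; Cauchy's theorem then shows that its value does not depend on the choice of $\sigma > 0$. The Laplace transform identity \eqref{f: laplace tr} essentially says that \eqref{f: Fgamma def} defines $F_{\alpha,\gamma}(s)/s$ as the Bromwich inverse Laplace transform of $w \mapsto w^\gamma e^{-w^\alpha}$. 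To verify \eqref{f: laplace tr} by hand, I would substitute \eqref{f: Fgamma def} into its right-hand side, swap the order of integration by absolute convergence, evaluate $\int_0^\infty e^{-s(w-z)} \wrt s = 1/(z-w)$ for $\Re z > \sigma$, and apply the residue theorem to the resulting contour integral of $w^\gamma e^{-w^\alpha}/(z-w)$, closing the contour at infinity in the right half-plane where $e^{-w^\alpha} \to 0$.

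For the pointwise estimate \eqref{f: Fgamma est}, I would choose $\sigma$ optimally as a function of $s$. The real-axis restriction of the exponent, $\sigma \mapsto s\sigma - \sigma^\alpha \cos(\alpha\pi/2)$, is minimized at $\sigma_0 \asymp s^{-1/(1-\alpha)}$, with minimum value $-c_\alpha s^{-\alpha/(1-\alpha)}$ for an explicit constant $c_\alpha > 0$. On the contour $z = \sigma_0 + i\tau$ we have $\Re(z^\alpha) \geq |z|^\alpha \cos(\alpha\pi/2)$, and splitting the $\tau$-integration into $|\tau| \leq \sigma_0$ (where $|z| \asymp \sigma_0$) and $|\tau| > \sigma_0$ (where $|z| \asymp |\tau|$ and Laplace's method applies to $\int |\tau|^\gamma e^{-|\tau|^\alpha \cos(\alpha\pi/2)} \wrt \tau$) produces a bound of the form $C s \, \sigma_0^{\gamma+\text{const}} e^{-c_\alpha s^{-\alpha/(1-\alpha)}}$. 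Substituting $\sigma_0 \asymp s^{-1/(1-\alpha)}$ yields a polynomial-in-$s$ prefactor that can be absorbed into the exponential by replacing $c_\alpha$ with a slightly smaller constant $2\rho > 0$, producing the claimed bound $C s^{-\gamma} e^{-2\rho s^{-\alpha/(1-\alpha)}}$. The main difficulty here is the uniform-in-$s$ bookkeeping of the polynomial losses from the transverse integration.

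For (ii), applying \eqref{f: Fgamma est} with $u = s/t^{1/\alpha}$ and using the cancellation $t^{-\gamma/\alpha}(s/t^{1/\alpha})^{-\gamma} = s^{-\gamma}$ gives
\[
t^c \, |t^{-\gamma/\alpha} F_{\alpha,\gamma}(s/t^{1/\alpha})| \leq C s^{-\gamma} t^c e^{-2\rho (t/s^\alpha)^{1/(1-\alpha)}}.
\]
The supremum over $t \geq 1$ can be computed via the substitution $w = (t/s^\alpha)^{1/(1-\alpha)}$, converting the problem into maximizing $s^{\alpha c} w^{c(1-\alpha)} e^{-2\rho w}$ over $w \geq s^{-\alpha/(1-\alpha)}$. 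Elementary calculus shows that the result is bounded by a constant times $s^{\alpha c}$ for $s$ sufficiently large (interior maximum, when $c>0$), and by $e^{-2\rho s^{-\alpha/(1-\alpha)}}$ for $s$ small (boundary maximum at $w = s^{-\alpha/(1-\alpha)}$, with cancellation of $s^{\pm \alpha c}$ factors). Multiplying back by $s^{-\gamma}$, the integrand in the statement of (ii) has super-exponential decay near $0$ and a tail bound $C s^{\alpha c - \gamma - 1}$ after dividing by $s$, which is integrable at $\infty$ precisely when $c < \gamma/\alpha$. One subtle point to watch out for is the edge case $\gamma = 0$ with $c < 0$, where \eqref{f: Fgamma est} may need to be sharpened at infinity (for instance, by extracting an additional $s^{-\alpha}$ decay from the expansion $e^{-z^\alpha} = 1 - z^\alpha + O(z^{2\alpha})$ near $z = 0$) to ensure integrability of the tail, since the stated bound only yields $|F_{\alpha,0}(s)| \leq C$ for large $s$.
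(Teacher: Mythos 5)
Your treatment of the Laplace inversion identity and the overall structure of part (ii) are sound, but there is a genuine gap in your proof of the pointwise bound \eqref{f: Fgamma est}, namely in the regime $s \geq 1$. The vertical-contour estimate with $\sigma = \sigma_0 \asymp s^{-1/(1-\alpha)}$ works exactly as you describe when $s \leq 1$ (there $\sigma_0 \gtrsim 1$ and the factor $\e^{s\sigma_0 - c_\alpha\sigma_0^{\alpha}}$ supplies the super-exponential decay), but for $s \geq 1$ it cannot produce the factor $s^{-\gamma}$. Indeed, for large $s$ one has $\sigma_0 \to 0$ and $s\sigma_0 = O(s^{-\alpha/(1-\alpha)}) \to 0$, so taking absolute values on the line $\Re z = \sigma_0$ gives
\[
|F_{\alpha,\gamma}(s)| \leq \frac{s}{2\pi}\,\e^{s\sigma_0} \int_\BR |\sigma_0 + i\tau|^{\gamma}\, \e^{-\cos(\alpha\pi/2)\,|\sigma_0+i\tau|^{\alpha}} \wrt\tau \leq C s,
\]
because the transverse integral converges to a nonzero constant as $\sigma_0 \to 0$: the region $|\tau| > \sigma_0$ contributes $O(1)$, not a positive power of $\sigma_0$, so the bound is not of the form $Cs\,\sigma_0^{\gamma+\mathrm{const}}$ you posit. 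Your proposed fix --- absorbing the polynomial prefactor into the exponential by shrinking the constant --- is unavailable here precisely because $\e^{-c_\alpha s^{-\alpha/(1-\alpha)}} \to 1$ as $s \to \infty$: one cannot convert a bound of size $s$ into one of size $s^{-\gamma}$ by adjusting a factor that is bounded below. The decay at infinity must come from exploiting the sign of $\Re(sz)$, and the paper does this by deforming the contour (for $s \geq 1$) to the two rays $z = r\e^{\pm i\theta}$ with $\theta = \pi/(1+\alpha) \in (\pi/2,\pi)$, so that $\Re(sz) = sr\cos\theta < 0$ while $\Re(z^{\alpha}) = r^{\alpha}\cos(\alpha\theta) > 0$; this yields $|F_{\alpha,\gamma}(s)| \leq C\int_0^\infty s r^{\gamma} \e^{-k(sr+r^{\alpha})}\wrt r \leq C s^{-\gamma}$. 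Without this (or an equivalent device such as repeated integration by parts in $\tau$), part (i) fails for large $s$, and with it the tail integrability in part (ii), whose convergence at $s=\infty$ rests entirely on the factor $s^{-\gamma}$.

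Two smaller remarks. First, your verification of \eqref{f: laplace tr} via Fubini and residues is fine; the paper simply invokes Yosida's construction. Second, your closing caveat about the case $\gamma=0$, $c<0$ is well taken and quite perceptive: the bound $|F_{\alpha,0}(s)| \leq C$ for $s \geq 1$ is indeed insufficient there, and one needs the sharper large-$s$ decay of $F_{\alpha,0}$ (of order $s^{-\alpha}$) alluded to in the remark following the lemma. Note also that for $c<0$ the supremum over $t \geq 1$ is attained at $t=1$, so the paper's intermediate bound $Cs^{\alpha c-\gamma}\e^{-\rho s^{-\alpha/(1-\alpha)}}$ should be replaced by $Cs^{-\gamma}\e^{-2\rho s^{-\alpha/(1-\alpha)}}$ in that case --- still integrable against $\wrt s/s$ when $\gamma>0$, but not when $\gamma=0$. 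Your instinct to treat this edge case separately is correct; it is the large-$s$ regime of part (i) that needs a real repair.
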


We point out that, in the case $\gamma=0$, a more precise estimate of $F_{\al,\ga}(s)$ for $s \to \infty$ can be proved than the one in \eqref{f: Fgamma est}, as shown in \cite[Lemma 5.2]{MaMV2}. However we will not need this here.

\begin{proof}
We first prove part \ref{en:est_Lpt_pointwise}.
The construction of the function $F_{\alpha,\gamma}$ satisfying \eqref{f: laplace tr} through the contour integration 
\eqref{f: Fgamma def} is a small modification of that in \cite[Section IX.11]{Y}, and can be justified through the theory 
of the Laplace transform.

Let $a := \cos(\al \pi/2)/3$. Since $\Re(z^\al) \geq 3a |z|^\al$ whenever $\Re z > 0$, from 
the representation \eqref{f: Fgamma def} we obtain that, for any $\sigma \geq (a\alpha)^{1/(1-\alpha)}$,
\[\begin{split}
|F_{\al,\ga}(s)| 
&\leq \frac{1}{2\pi} s \e^{s\sigma} \sigma^{\ga+1} \ir (1+t^2)^{\ga/2} \e^{-3a \sigma^\al (1+t^2)^{\al/2}} \wrt t \\
&\leq C  s \e^{2s\sigma} \e^{-2a\sigma^\al },
\end{split}\]
where the constant $C$ may depend on $\al$ and $\ga$ but not on $\sigma$;
by choosing $\sigma = (a\al/s)^{1/(1-\al)}$ and observing that
\[
s (a\al/s)^{1/(1-\al)}- a(a\al/s)^{\al/(1-\al)} = -(1-\al)  \al^{\al/(1-\al)}  a^{1/(1-\al)} s^{-\al/(1-\al)},
\]
we obtain the estimate in \eqref{f: Fgamma est} in the case $s \leq 1$. As for the remaining case, by changing the 
contour of integration in \eqref{f: Fgamma def} to the concatenation of the two half-lines 
$z = r \e^{-i\theta}$ ($-\infty < -r < 0$) and $z = r \e^{i\theta}$ ($0 < r < \infty$) for any $\theta \in (\pi/2,\pi)$, we obtain, 
much as in \cite[p.~263]{Y}, the representation
\[
F_{\al,\ga}(s) 
= \frac{1}{\pi} \ioty s r^{\ga} \e^{sr \cos\theta 
	- r^\al \cos(\al\theta)} \sin\left(sr\sin\theta-r^\al \sin(\al\theta) + \theta(1+\ga)\right) \wrt r;
\]
if we take $\theta = \pi/(1+\alpha)$, then $\alpha\theta = \pi-\theta$, thus $k := \cos(\alpha\theta) = -\cos(\theta) > 0$ and
\[
|F_{\al,\ga}(s)| 
\leq C \ioty s r^{\ga} \e^{-k(sr+r^\al)} \wrt r 
\leq C s^{-\ga}
\]
for $s \geq 1$. This completes the proof of part \ref{en:est_Lpt_pointwise}.

Next we prove part \ref{en:est_Lpt_max}. 
Note that, by \eqref{f: Fgamma est},
\[\begin{split}
\sup_{t\geq 1} \,  t^c \left|  t^{-\ga/\al} F_{\al,\ga}(s/t^{1/\al}) \right| 
&\leq Cs^{\al c-\ga} \sup_{t \geq 1} \, (t/s^{\al})^c \e^{-2\rho (t/s^\al)^{1/(1-\al)}} \\
&\leq Cs^{\al c-\ga} \e^{-\rho s^{-\al/(1-\al)}},
\end{split}\]
and the latter is integrable with respect to $\mathrm{d}s/s$ provided $c < \ga/\al$.
\end{proof}

In view of the equality $\gXum{M} = H_{\cR}^1(M)$ proved in Section~\ref{s: Hardy--Riesz space}, 
the estimate \eqref{f: Anker maximal} corresponds to the case $\ga=1/2$ of Theorem \ref{t: maximal heat manifold}~\ref{en:maxhpman_poisson_reverse} below.

\begin{theorem}\label{t: maximal heat manifold}
Suppose that $\ga$ is a positive number.  The following hold:
\begin{enumerate}[label=(\roman*)]
\item\label{en:maxhpman_heat}
$\huHc{M}\subseteq\ghu{M}$ for all $c \in \BR$ and $\huHc{M} \subseteq \gXga{M}$ for all $c>\ga$;
\item\label{en:maxhpman_poisson}
$\huPc{M}\subseteq \ghu{M}$ for all $c \in \BR$ and
$\huPc{M}\subseteq \gXga{M}$ for all $c>2\gamma$;
\item\label{en:maxhpman_poisson_reverse}
$\huPc{M} = \ghu{M}$ for all $c<0$ and 
$\gXga{M} \subseteq \huPc{M}$ for all $c<2\gamma$;
\item\label{en:maxhpman_heatpoisson}
$\huHc{M}\subseteq H^1_{\cP,2c}(M)$ for all $c \in \BR$.
\end{enumerate}
\end{theorem}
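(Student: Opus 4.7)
The plan is to organize the argument by method rather than following the order in which the four assertions appear. The opening observation is that, since $\langle t\rangle = 1$ on $(0,1]$, one has the pointwise inequalities $\cH_*^c f \geq \maxHloc f$ and $\cP_*^c f \geq \maxPloc f$ for every $c \in \BR$; combined with Proposition~\ref{p: H1Hloc}, this immediately yields $\huHc{M} \subseteq \ghu{M}$ and $\huPc{M} \subseteq \ghu{M}$, accounting for the first halves of (i) and (ii).

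For the refined inclusions $\huHc{M} \subseteq \gXga{M}$ when $c > \ga$ and $\huPc{M} \subseteq \gXga{M}$ when $c > 2\ga$, Proposition~\ref{p: cinftyc}\ref{en:cinftyc_car} reduces the task to showing $\cL^{-\ga} f \in \ghu{M}$. The natural tool is the Mellin-type representation $\cL^{-\ga} = \Gamma(\ga)^{-1} \int_0^\infty t^\ga \cH_t \dtt{t}$ (and its Poisson analogue $\cL^{-\ga} = \Gamma(2\ga)^{-1} \int_0^\infty t^{2\ga-1} \cP_t \wrt t$). Splitting the integral at $t = 1$, on the outer region $t \geq 1$ one invokes the weighted pointwise bound $|\cH_t f| \leq \langle t\rangle^{-c}\cH_*^c f$ (or its Poisson version) to gain polynomial decay; the relevant Mellin integral converges precisely when $c > \ga$ (resp.\ $c > 2\ga$). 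On the inner region $t \leq 1$ the $L^1$-contractivity of the semigroup suffices, since $\ga > 0$ makes $t^{\ga-1}$ (resp.\ $t^{2\ga-1}$) integrable at $0$. The same splitting applied to $\cH_s \cL^{-\ga} f = \Gamma(\ga)^{-1}\int_0^\infty t^\ga \cH_{s+t}f\dtt{t}$ uniformly in $s \in (0,1]$ produces the required bound for $\maxHloc \cL^{-\ga} f$: the $t \leq 1$ portion reduces to a local maximal operator over times $u \in (0,2]$, which is controlled by $\|f\|_{\ghu{M}}$ after using boundedness of $\cH_1$ on $\ghu{M}$, while the $t \geq 1$ portion is again handled by the weighted estimate. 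A parallel argument handles $\maxPloc \cL^{-\ga} f$.

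Part (iii) splits in two. For the reverse inclusion $\ghu{M} \subseteq \huPc{M}$ when $c < 0$, one bounds $\cP_*^c f \leq \maxPloc f + \sup_{t > 1} t^c |\cP_t f|$; the first summand is in $L^1$ because $\ghu{M} = \frh^1_{\cP}(M)$, and starting from the subordination formula \eqref{f: sub Poisson}, the second is dominated pointwise by $\int_0^\infty K_c(u)|\cH_u f(x)|\wrt u$, where $K_c(u) = \sup_{t > 1} t^c k(t,u)$ with $k$ the stable subordinator density; a routine calculation (distinguishing $c < -1$ from $-1 \leq c < 0$) shows $K_c \in L^1((0,\infty),\wrt u)$ precisely when $c < 0$, and Fubini together with $L^1$-contractivity of $\cH_u$ concludes. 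For $\gXga{M} \subseteq \huPc{M}$ when $c < 2\ga$, write $f = \cU^\ga g$ with $g \in \ghu{M}$, set $h = (\cI+\cL)^{-\ga} g \in \ghu{M}$ (using $\frh^1$-boundedness of the resolvent power), and observe $\cP_t f = \cL^\ga \cP_t h$. Here Lemma~\ref{l: est Lpt} with $\al = 1/2$ is crucial: rescaling \eqref{f: laplace tr} yields the subordination $\cL^\ga \cP_t = t^{-2\ga}\int_0^\infty F_{1/2,\ga}(u/t^2) \cH_u \dtt{u}$. For $t \leq 1$ the contribution to $\cP_*^c f$ collapses to $\maxPloc f \in L^1$ (since $\gXga{M} \subseteq \ghu{M}$); for $t \geq 1$, interchanging the supremum with the $u$-integral gives a pointwise bound by $\int_0^\infty \tilde K(u)|\cH_u h(x)|\dtt{u}$ with $\tilde K(u) = \sup_{t \geq 1} t^{c-2\ga}|F_{1/2,\ga}(u/t^2)|$, and Lemma~\ref{l: est Lpt}\ref{en:est_Lpt_max} (specialised to $\al = 1/2$, $\ga/\al = 2\ga$) guarantees $\tilde K \in L^1((0,\infty),\dtt{u})$ exactly when $c < 2\ga$.

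For part (iv) I would distinguish cases. When $c < 0$, composing (i) with the just-proved (iii) yields $\huHc{M} \subseteq \ghu{M} \subseteq H^1_{\cP,2c}(M)$ (since $2c < 0$). When $c \geq 0$, a direct subordination argument works: on $t \leq 1$, $\cP_*^{2c}$ localises to $\maxPloc$, which is controlled by $\|f\|_{\ghu{M}}$; on $t \geq 1$, from \eqref{f: sub Poisson} and $|\cH_u f|\leq \langle u\rangle^{-c}\cH_*^c f$ one obtains the pointwise estimate $t^{2c}|\cP_t f(x)| \leq \cH_*^c f(x)\int_0^\infty t^{2c} k(t,u)\langle u\rangle^{-c}\wrt u$, and the change of variable $u = t^2 v$ (which normalises $k(t,\cdot)$) combined with the integrability of $v^{-c-3/2}\e^{-1/(4v)}$ at infinity (valid for $c > -1/2$, in particular for $c \geq 0$) gives a bound uniform in $t \geq 1$. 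The main technical obstacle throughout is the careful interchange of supremum and integral and the sharp asymptotic analysis of the subordination kernels $k(t,u)$ and $F_{1/2,\ga}$; fortunately, Lemma~\ref{l: est Lpt}\ref{en:est_Lpt_max} is tailored precisely to the expression that arises in the crucial step of (iii).
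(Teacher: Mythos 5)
Your proposal is correct and follows essentially the same route as the paper: the pointwise domination of the local maximal operators plus Proposition~\ref{p: H1Hloc} for the first halves of (i)--(ii), the Calder\'on-type reproducing formula $\cI = c_\gamma\int_0^\infty (t\cL)^\gamma \e^{-t\cL}\,\mathrm{d}t/t$ (and its Poisson analogue) for the refined inclusions, the subordination formula of Lemma~\ref{l: est Lpt} with $\al=1/2$ for (iii), and the change of variables in \eqref{f: sub Poisson} for (iv). The only cosmetic differences are that you treat the $c<0$ case of (iii) by a direct computation with the stable subordinator density rather than as the $\ga=0$ instance of the Lemma~\ref{l: est Lpt} argument, and in (iv) you control the local part via $\maxPloc$ and the $\frh^1$ norm instead of the pointwise bound $\sup_{0<t\le 1}|\cP_t f|\le C\,\cH_* f$.
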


\begin{proof}
First we prove part \ref{en:maxhpman_heat}.  Suppose that $f$ is in $\huHc{M}$, i.e. $\cH^c_* f$ is in $\lu{M}$.  Then
$\maxHloc f$ is in $\lu{M}$, whence $f$ is in $\ghu{M}$, by Proposition~\ref{p: H1Hloc}, and
\[
\|f\|_{\frh^1} \leq C \, \| \maxHc f \|_{L^1}.
\]

Now suppose that $c>\ga$ and assume that $f$ is in $\huHc{M}$.  We have already proved that $f$ is in $\ghu{M}$.
By Proposition \ref{p: cinftyc}~\ref{en:cinftyc_car}, in order to conclude~$f$ is in $\gXga{M}$, it suffices to prove that 	
$g\defeq\cL^{-\gamma}f$ is in $\ghu{M}$, and there exists a constant $C$ such that 
\[
	\bignorm{\maxHloc (\cL^{-\ga}f)}{L^1}
	\leq C \bignorm{\maxHc f}{L^1} 
	\quant f \in \huHc{M}.  
\]
By spectral theory,
\begin{equation} \label{f: Calderon heat}
	\cI = c_\gamma \ioty (t\cL)^\gamma \, \e^{-t\cL}  \, {\dtt t},
\end{equation} 
where $c_\gamma \defeq 1/\Ga(\ga)$.  Thus, for each $s$ in $(0,1]$,
\[
\cH_s g 
	= c_\gamma \ioty \, t^{\gamma} \, \cH_{t+s} f \, {\dtt t}  
	= c_\gamma \ioty \, \frac{t^{\gamma}}{\langle t+s\rangle^c}\, \langle t+s\rangle^c\, \cH_{t+s} f  {\dtt t}.  
\]
By taking the supremum with respect to $s$ in $(0,1]$, we see that
\[
\bignorm{\maxHloc g}{L^1} 
\leq C \bignorm{\maxHc f}{L^1} \ioty t^{\ga-1} \langle t\rangle^{-c} \wrt t.
\]
The integral on the right hand side is convergent, because $c>\gamma>0$, and therefore $g$ is in $\ghu{M}$, by Proposition \ref{p: H1Hloc}, as required.

The proof of part \ref{en:maxhpman_poisson}
is entirely analogous to that of part \ref{en:maxhpman_heat}.
One simply needs to replace $\gamma$ with $2\gamma$, $\cH$ with $\cP$, as well as $\cL$ with $\cL^{1/2}$ in the reproducing formula \eqref{f: Calderon heat}.  We omit the details. 

Next, we prove part \ref{en:maxhpman_poisson_reverse}.
Due to the inclusion already established in part \ref{en:maxhpman_poisson}, we only need to prove that
$\ghu{M} \subseteq \huPc{M}$ for all $c < 0$, and
$\gXga{M} \subseteq \huPc{M}$ for all $\gamma > 0$ and $c<2\gamma$. In light of the characterisation of $\gXga{M}$ in Proposition \ref{p: cinftyc}~\ref{en:cinftyc_car}, we are effectively reduced to proving that, for all $\gamma \geq 0$ and $c < 2\gamma$,
\[
\text{if } f \in \ghu{M} \text{ and } \cL^{-\gamma} f \in \ghu{M}, \text{ then } f \in \huPc{M}.
\]

Let then $\ga \geq 0$, and assume that both $f$ and $\cL^{-\ga} f$ are in $\ghu{M}$.  As $f$ is in $\ghu{M}$, we deduce that $\maxPloc f$ is in $\lu{M}$,
by Proposition~\ref{p: H1Hloc}, and 
\begin{equation} \label{f: incl I}
	\bignorm{\maxPloc f}{L^1} \leq C \bignorm{f}{\frh^1}.  
\end{equation}
Let $c< 2\ga$; in order to prove that $f$ is in $\huPc{M}$, it remains to show that 
$\sup_{t>1}\, t^c \mod{\cP_t f}$ is in $\lu{M}$.  We write $\cP_t f = \cL^\ga \cP_t\cL^{-\ga}f$, and set $g\defeq\cL^{-\ga}f$. 
Formula \eqref{f: laplace tr}, applied with $\al=1/2$, and spectral theory yield the subordination formula
\begin{equation} \label{f: Lpt sub}
\cL^{\gamma} \cP_t g = t^{-2\ga}  \ioty F_{1/2,\ga}(s) \cH_{st^2}g \dtt s 
=  \ioty \left[ t^{-2\ga} F_{1/2,\ga}(s/t^2) \right] \cH_s g \dtt s. 
\end{equation}
Then 
\[
\left\|\sup_{t>1}\, t^c \mod{\cP_t f}\right\|_{L^1}
\leq \ioty \sup_{t>1} t^c \bigmod{ t^{-2\ga} F_{1/2,\ga}(s/t^2)} \bignorm{\cH_s g}{L^1} \dtt s
\leq C \bignorm{g}{L^1};
\]
the last inequality follows from the contractivity of $\cH_s$ on $\lu{M}$ and Lemma~\ref{l: est Lpt}~\ref{en:est_Lpt_max} (which applies
for $c<2\ga$). Combining this inequality with \eqref{f: incl I} finally yields
\[
\bignorm{\cP_*^c f}{L^1} \leq C \left[ \|f\|_{\frh^1} + \|\cL^{-\ga} f\|_{\frh^1}  \right],
\]
as required.

Finally, we prove part \ref{en:maxhpman_heatpoisson}. If $c<0$, then $H^1_{\cH,c}(M) \subseteq \ghu{M} = H^1_{\cP,2c}(M)$ by parts \ref{en:maxhpman_heat} and 
\ref{en:maxhpman_poisson_reverse}. Assume instead that $c\geq 0$.
By changing variables in the subordination formula  \eqref{f: sub Poisson}, we see that 
\[
\cP_t = \frac{1}{\sqrt \pi} \ioty v^{-1/2} \, \e^{-v} \, \cH_{t^2/4v} \wrt v,
\]
and consequently, since $c\geq 0$,
\[
t^{2c} |\cP_t f| \leq \frac{4^{c}}{\sqrt \pi} \ioty v^{c-1/2} \, \e^{-v} \, \langle t^2/4v \rangle^{c} |\cH_{t^2/4v} f| \wrt v.
\]
Since $\ioty v^{c-1/2} \, \e^{-v} \wrt v < \infty$, this implies the pointwise estimate
\[
\sup_{t > 0} t^{2c} |\cP_t f| \leq C \cH_*^c f,
\]
and therefore
\[
\cP_*^{2c} f \leq \sup_{0 < t \leq 1} |\cP_t f| + \sup_{t\geq 1} t^{2c} |\cP_t f| \leq C (\cH_* f + \cH_*^c f) \leq C \cH_*^c f,
\]
as desired.
\end{proof}

From Theorem~\ref{t: maximal heat manifold} we deduce the following strict inclusions, which imply among other things that the Riesz Hardy space and the Poisson--Hardy space on $M$ cannot coincide.

\begin{corollary} \label{c: maximal heat manifold}
For each $\ga,\vep>0$ the following proper inclusions hold:
\[
H_{\cP,2\ga+\vep}^1(M) \subset \gXga{M} \subset H_{\cP,2\ga-\vep}^1(M).
\]
In particular,
\[
H_{\cP,1+\vep}^1(M) \subset \huR{M} \subset H_{\cP,1-\vep}^1(M)
\]
and therefore
\[
\huR{M} \subset \huP{M}.
\]
\end{corollary}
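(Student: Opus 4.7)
The plan is to derive the non-strict inclusions directly from parts \ref{en:maxhpman_poisson} and \ref{en:maxhpman_poisson_reverse} of Theorem \ref{t: maximal heat manifold}, and then to upgrade each of them to a strict inclusion by sandwiching with a space of the form $\fX^{\ga'}(M)$ and invoking the proper inclusion from Proposition \ref{p: cinftyc}\ref{en:cinftyc_decr}. Concretely, the choice $c = 2\ga+\vep > 2\ga$ in Theorem \ref{t: maximal heat manifold}\ref{en:maxhpman_poisson} yields $H_{\cP,2\ga+\vep}^1(M) \subseteq \gXga{M}$, while the choice $c = 2\ga-\vep < 2\ga$ in Theorem \ref{t: maximal heat manifold}\ref{en:maxhpman_poisson_reverse} (valid as $\vep>0$) yields $\gXga{M} \subseteq H_{\cP,2\ga-\vep}^1(M)$.

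For the strictness of the left inclusion, I would apply Theorem \ref{t: maximal heat manifold}\ref{en:maxhpman_poisson} a second time, keeping $c = 2\ga+\vep$ but enlarging the exponent to $\ga' = \ga+\vep/4$: since $c > 2\ga' = 2\ga+\vep/2$, this produces $H_{\cP,2\ga+\vep}^1(M) \subseteq \fX^{\ga+\vep/4}(M)$. As Proposition \ref{p: cinftyc}\ref{en:cinftyc_decr} asserts that the latter space is \emph{properly} contained in $\gXga{M}$, the strict inclusion $H_{\cP,2\ga+\vep}^1(M) \subsetneq \gXga{M}$ follows at once.

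The strictness of the right inclusion will be handled by a symmetric sandwich. I would fix any $\ga' \in \bigl(\max(\ga-\vep/2,\,0),\,\ga\bigr)$; then $\ga' > 0$, $\ga' < \ga$, and $2\ga' > 2\ga-\vep$. Theorem \ref{t: maximal heat manifold}\ref{en:maxhpman_poisson_reverse} applied with exponent $\ga'$ and $c = 2\ga-\vep < 2\ga'$ gives $\fX^{\ga'}(M) \subseteq H_{\cP,2\ga-\vep}^1(M)$, while Proposition \ref{p: cinftyc}\ref{en:cinftyc_decr} gives $\gXga{M} \subsetneq \fX^{\ga'}(M)$; concatenating these yields the desired strict inclusion. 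The only mildly delicate point is the choice of $\ga'$, which must be strictly positive in order for $\fX^{\ga'}(M)$ to be defined; this is the role of the $\max(\cdot,0)$ truncation, which is binding only when $\vep \geq 2\ga$.

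The three remaining consequences are then immediate. Specialising the main chain to $\ga = 1/2$ and invoking the characterisation $\huR{M} = \gXum{M}$ from Theorem \ref{t: char Riesz} yields the claimed chain for $\huR{M}$. Finally, since the family $\{H_{\cP,c}^1(M)\}_{c\in\BR}$ is evidently decreasing in $c$, setting $\vep = 1$ in the right inclusion gives $\huR{M} \subsetneq H_{\cP,0}^1(M) = \huP{M}$. I do not anticipate any significant obstacle: the entire argument is a formal combination of the three preceding results in the section, with the only slightly delicate point being the correct choice of the auxiliary exponent $\ga'$ in the strict inclusions.
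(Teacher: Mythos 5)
Your proposal is correct and follows essentially the same route as the paper: non-strict inclusions from Theorem \ref{t: maximal heat manifold}\ref{en:maxhpman_poisson}--\ref{en:maxhpman_poisson_reverse}, strictness via sandwiching with an intermediate $\fX^{\ga'}(M)$ and the strict monotonicity of Proposition \ref{p: cinftyc}\ref{en:cinftyc_decr}, then specialisation to $\ga=1/2$ and $\vep=1$ using Theorem \ref{t: char Riesz}. The only difference is that you carry out the second strictness argument explicitly (including the $\max(\ga-\vep/2,0)$ truncation ensuring $\ga'>0$), which the paper omits as ``similar''.
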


\begin{proof}
Up to the fact that the inclusions are strict, the first chain of inclusions follows by combining parts \ref{en:maxhpman_poisson} and \ref{en:maxhpman_poisson_reverse} of Theorem~\ref{t: maximal heat manifold}. 
Moreover, in light of the equality $\gXum{M} = \huR{M}$ proved in Theorem~\ref{t: char Riesz}, the second chain of inclusions is a restatement of the first one in the case where $\ga = 1/2$, and the last inclusion follows by further specialising to $\vep = 1$. So we only need to check the strictness of the first chain of inclusions.

To prove that $H_{\cP,2\ga+\vep}^1(M)$ is properly contained in $\huR{M}$, we argue as follows.  Choose~$\de$ so that $2\ga+\vep >\de>2\ga$.   
Note that $H_{\cP,2\ga+\vep}^1(M) \subseteq \fX^{\de/2}(M)$, by Theorem~\ref{t: maximal heat manifold}~\ref{en:maxhpman_poisson}.
Since the family of spaces $\{\gXga{M}: \ga>0\}$ is strictly decreasing (see Proposition \ref{p: cinftyc}~\ref{en:cinftyc_decr}) and $\de>2\ga$,
$\gXga{M}$ contains $\fX^{\de/2}(M)$ properly, whence $\gXga{M}$ contains $H_{\cP,2\ga+\vep}^1(M)$ properly, as required.

The proof that $\gXga{M}$ is properly contained in $H_{\cP,2\ga-\vep}^1(M)$ is similar, and is omitted. 
\end{proof}

In light of Corollary~\ref{c: maximal heat manifold}, a natural question is whether $\huPu{M}$ agrees with $\huR{M}$.  
We shall see in Section~\ref{s: Maximal heat operator} that the answer is in the negative when $M$ is a Damek--Ricci space.  
In fact, a similar remark applies to $H_{\cP,2\gamma}^1(M)$ and $\gXga{M}$ for any $\gamma>0$, as shown in Theorem~\ref{t: comparison weighted poisson} below.

\section{Noninclusions on Damek--Ricci spaces}
\label{s: Maximal heat operator}

The results in this section state that certain inclusions between various type of Hardy spaces introduced in the previous sections
fail in the case where the Riemannian manifold $M$ is a Damek--Ricci space, which we shall henceforth denote by $S$.  
The main reason for restricting to Damek--Ricci spaces is that we need rather precise estimates for the heat semigroup that 
we obtain via spherical Fourier analysis.
The noninclusions we prove on Damek--Ricci spaces are likely to hold on all symmetric spaces of the noncompact type.  
However, this would add little knowledge at the expense of a presumably long analysis based on quite intricated and technical proofs.  
Altogether, we believe this is not worth doing here.  

\subsection{Damek--Ricci spaces}

We refer to \cite{ADY, As, DR, DiB} for more details on the analysis on Damek--Ricci spaces. We just recall here that a  
Damek--Ricci space $S$ is the one-dimensional harmonic extension of an Heisenberg-type group $N$ obtained 
by making $\BR^+$ act on $N$ by automorphic dilations.
In particular, $S$ is a connected Lie group, with a Riemannian structure which is invariant by left translations, and the Riemannian measure $\mu$ is a left Haar measure.
We denote by $Q$ the homogeneous dimension 
of $N$ and by $n$ the dimension of $S$. It is known that in this case the bottom of the 
$L^2$-spectrum of the Laplace--Beltrami operator $\cL$ is $b=Q^2/4$.

A radial function on $S$ is a function that depends only on the distance from the base point $o$. If $f$ is radial, then we may write $f(r)$ in place of $f(x)$ when $d(x,o)=r$; moreover
\begin{equation}\label{f: misuraDR}
\int_S f(x) \wrt \mu(x)= \ioty f({r}) A({r}) \wrt r\,, \quad\text{where }
A(r) \asymp \begin{cases}r^{n-1}&r<1\\
\e^{Qr}&r\geq 1\,.
\end{cases}
\end{equation}

A radial function is called \emph{spherical} if it is an eigenfunction of $\cL$ and if it takes value $1$ at $o$. 
For $\la\in\BC$ we denote by $\phi_{\la}$ the spherical function such that
\begin{equation}\label{f: sph_eigen}
\cL \phi_{\la}=(\la^2+Q^2/4)\, \phi_{\la}.
\end{equation}
The \emph{spherical transform} of an integrable radial function is defined as 
\[
\wt f(\la)=\int_S f(x)\, \phi_{\la}(x) \wrt \mu(x)\,.
\]
Note that $\wt f$ is an even function, as $\phi_{\la} = \phi_{-\la}$; moreover, for ``nice'' radial functions $f$ the following inversion formula holds:
\[
f(x) = C  \ioty \wt f(\la) \, \phi_{\la}(x)\, |\bc(\la)|^{-2} \wrt \la\,,
\]
where the density $|\bc(\la)|^{-2}$ is expressed in terms of the Harish-Chandra function
\begin{equation}\label{f: HC}
\bc(\la) 
= \frac{2^{Q-2i\la} \Gamma(2i\la) }{\Gamma(Q/2+i\la)} \,\frac{\Gamma(n/2)}{\Gamma((n-Q)/2+i\la)}
= \overline{\bc(-\overline{\la})}
\end{equation}
(see \cite[p.\ 151]{As}).

In the following statement we collect a few well-known properties of the function $\bc$ and the spherical functions $\phi_\lambda$, that will be of use in the sequel.

\begin{lemma}\label{l: sphestimates}
The following hold.
\begin{enumerate}[label=(\roman*)]
\item\label{en:sphestimates_cholo}
 The function $\bc^{-1}$ is holomorphic in the half-plane $\{\la: \Im\la<Q/2\}$.

\item\label{en:sphestimates_csymb}
 For all $k\in\BN$ and all $\sigma>0$, there exists a constant $C$ such that
\[
|\partial_{\la}^k\big(\la^{-1} \bc(-\la)^{-1}  \big) |
\leq C\, (1+\left|\Re\la\right|)^{(n-3)/2-k}
\]
for all $\lambda \in \BC$ with $0\leq \Im \la\leq \sigma$.

\item\label{en:sphestimates_sphbdd}
$\phi_\lambda$ is bounded if and only if $\bigmod{\Im\la} \leq Q/2$, and in that case $\|\phi_\lambda\|_\infty \leq 1$.

\item\label{en:sphestimates_sphone}
$\phi_{\pm iQ/2} \equiv 1$.

\item\label{en:sphestimates_sphexp}
For every $\lambda\in\BC \setminus \frac12 i \BZ$ and $r > 0$,
\[
\phi_{\lambda}(r) =  \bc(\lambda) \Phi_{\lambda}(r) + \bc(-\lambda) \Phi_{-\lambda}(r),
\]
where
\[
\Phi_\lambda(r) = \sum_{\ell=0}^{\infty} \Gamma_{\ell}(\lambda)  \,  \e^{(i\lambda-Q/2-\ell)r};
\]
here $\Gamma_{0}\equiv 1$, and moreover there exist $d,C>0$ such that
\[
|\Gamma_{\ell}(\lambda)| \leq C (1+\ell)^d
\]
for every $\ell \in \BN$ and every $\lambda$ in
\[
\{\lambda\in\BC \,:\, \Im \lambda \geq 0\}\cup\{ \lambda\in\BC \,:\, \left|\Im\lambda\right|\leq \left|\Re\lambda\right|\}.
\]
\end{enumerate}
\end{lemma}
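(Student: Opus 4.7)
The plan is to dispatch the five items in sequence, relying on the explicit product formula \eqref{f: HC} for the Harish-Chandra function and on standard results from the analysis on Damek--Ricci spaces collected in \cite{ADY,As,DR,DiB}. Most of the statements are either direct consequences of \eqref{f: HC} combined with elementary properties of the Euler $\Gamma$ function, or may be found in the quoted references.

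For \ref{en:sphestimates_cholo}, I would invert \eqref{f: HC} to obtain
\[
\bc(\lambda)^{-1} = \frac{2^{2i\lambda-Q}}{\Ga(n/2)} \, \frac{\Ga(Q/2+i\lambda) \, \Ga((n-Q)/2+i\lambda)}{\Ga(2i\lambda)},
\]
and then argue that each numerator factor is holomorphic on the half-plane $\{\Im\lambda < Q/2\}$, since the poles of $\Ga(Q/2+i\lambda)$ and $\Ga((n-Q)/2+i\lambda)$ lie at $\lambda = i(Q/2 + k)$ and $\lambda = i((n-Q)/2 + k)$ for $k\in\BN$ (recalling $n \geq Q$), while $1/\Ga(2i\lambda)$ is entire. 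For \ref{en:sphestimates_csymb}, I would apply Stirling's asymptotic formula $|\Ga(x+iy)| \asymp |y|^{x-1/2}\,\e^{-\pi|y|/2}$ (valid as $|y| \to \infty$, uniformly for $x$ in a compact set) to each $\Ga$ factor; the exponentials cancel in the ratio, and a bookkeeping of the polynomial orders gives the bound $|\bc(-\lambda)^{-1}| \lesssim (1+|\Re\lambda|)^{(n-1)/2}$ on strips of bounded $\Im\lambda\geq 0$. Observing that $\bc(-\lambda)^{-1}$ vanishes at $\lambda=0$ (because $1/\Ga(-2i\lambda)$ does) shows that $\lambda^{-1}\bc(-\lambda)^{-1}$ is holomorphic on the strip, and a Cauchy estimate on a slightly larger strip upgrades the bound to the $k$-th derivative, yielding the claimed growth of order $(n-3)/2 - k$.

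For \ref{en:sphestimates_sphbdd}, the boundedness of $\phi_\lambda$ precisely for $|\Im\lambda|\leq Q/2$ (together with the normalisation $\|\phi_\lambda\|_\infty = \phi_\lambda(o) = 1$) is the classical analogue on Damek--Ricci spaces of the Helgason--Johnson theorem and can be cited directly from \cite[Section 2]{ADY}. Part \ref{en:sphestimates_sphone} is then immediate: by \eqref{f: sph_eigen}, $\phi_{\pm iQ/2}$ is a bounded radial solution of $\cL u = 0$ with $u(o)=1$; the constant function $1$ has the same properties, and the uniqueness of the bounded radial harmonic function with prescribed value at $o$ (which follows from the second-order radial ODE satisfied by spherical functions, together with the regularity requirement at $o$) forces $\phi_{\pm iQ/2}\equiv 1$.

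For \ref{en:sphestimates_sphexp}, I would quote the Harish-Chandra series expansion of $\phi_\lambda$ for Damek--Ricci spaces from \cite[Sections~3--4]{ADY} (see also \cite{As,DR}): the coefficients $\Ga_\ell(\lambda)$ arise from the recursion obtained by substituting the series $\Phi_\lambda(r) = \sum_\ell \Ga_\ell(\lambda) \e^{(i\lambda - Q/2 - \ell)r}$ into the radial form of the spherical equation \eqref{f: sph_eigen}, with the normalisation $\Ga_0\equiv 1$. The explicit solution of that recursion yields $\Ga_\ell(\lambda)$ as a rational function in $\lambda$, and a straightforward induction, controlling the denominators away from the set $\frac12 i\BZ$, provides the polynomial bound $|\Ga_\ell(\lambda)| \leq C(1+\ell)^d$ uniformly on the stated region. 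The hardest book-keeping step here will be part \ref{en:sphestimates_csymb}: one must verify that the polynomial orders from Stirling combine to give exactly $(n-3)/2$ after dividing by $\lambda$, and that the $k$-th derivatives are controlled uniformly on every strip $0\leq\Im\lambda\leq\sigma$; the remaining items are direct from the explicit formula \eqref{f: HC} or from the cited references.
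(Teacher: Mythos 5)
Your overall strategy --- reading everything off the explicit product formula \eqref{f: HC} and the standard Damek--Ricci literature --- is the same as the paper's, which in fact disposes of all five parts by citation (parts \ref{en:sphestimates_cholo} and \ref{en:sphestimates_csymb} ``follow from \eqref{f: HC}'', with \cite[Proposition A1(b)]{I2} quoted for \ref{en:sphestimates_csymb}; part \ref{en:sphestimates_sphbdd} is \cite[Theorem 5.12]{DR}; part \ref{en:sphestimates_sphexp} is \cite[formula (5) and Theorem 3.2]{As}). Your treatment of \ref{en:sphestimates_sphbdd}, \ref{en:sphestimates_sphone} and \ref{en:sphestimates_sphexp} is fine. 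However, the two places where you supply a genuine argument both contain gaps. For part \ref{en:sphestimates_cholo}, it is not true in general that the numerator factors of $\bc(\la)^{-1}$ are pole-free on $\{\Im\la<Q/2\}$: the poles of $\Ga((n-Q)/2+i\la)$ sit at $\la=i((n-Q)/2+k)$, and $(n-Q)/2<Q/2$ whenever $n<2Q$, i.e.\ whenever the centre of the H-type group has dimension greater than $1$ (e.g.\ for quaternionic hyperbolic spaces). Holomorphy still holds, but only because each such pole coincides with a simple pole of the factor $\Ga(2i\la)$ appearing in the denominator of $\bc(\la)^{-1}$, located at $\la=ik/2$ with $k=(n-Q)+2j$ an integer (this uses that $\dim\mathfrak{v}$ is even); you need to record this cancellation rather than assert that the numerator is holomorphic on the half-plane.

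For part \ref{en:sphestimates_csymb}, the Stirling computation giving $|\bc(-\la)^{-1}|\leq C(1+|\Re\la|)^{(n-1)/2}$ on horizontal strips is correct, but the passage to derivatives is not: a Cauchy estimate on a strip of fixed width uses discs of radius $O(1)$ and therefore yields $|\partial_\la^k(\la^{-1}\bc(-\la)^{-1})|\leq C(1+|\Re\la|)^{(n-3)/2}$ with \emph{no} gain of $(1+|\Re\la|)^{-k}$. Discs of radius comparable to $1+|\Re\la|$, which would produce that gain, leave the strip and meet both the poles of $\bc(-\,\cdot\,)^{-1}$ at $\Im\la\leq -(n-Q)/2$ and the region where the Gamma factors grow superexponentially in the imaginary direction, so the function is not controlled there. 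The symbol estimate of order $(n-3)/2-k$ is obtained instead by differentiating the explicit formula and using the asymptotic expansion of the digamma function $\psi=\Ga'/\Ga$: the logarithmic derivative of $\bc(-\la)^{-1}$ is $O(1/|\la|)$ on the strip because the leading logarithmic terms from the three Gamma factors cancel, and iterating this gives the full family of estimates. This is precisely the content of the paper's reference \cite[Proposition A1(b)]{I2}, which you should either reproduce or cite at this point.
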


\begin{proof}
Parts \ref{en:sphestimates_cholo} and \ref{en:sphestimates_csymb} follow from \eqref{f: HC}; in particular, for part \ref{en:sphestimates_csymb}, see the proof of \cite[Proposition A1(b)]{I2}. 
Part \ref{en:sphestimates_sphbdd} is in \cite[Theorem 5.12]{DR}, while part \ref{en:sphestimates_sphone} is clear from \eqref{f: sph_eigen}.
As for part \ref{en:sphestimates_sphexp}, see \cite[formula (5) and Theorem 3.2]{As}.
\end{proof}

It is useful to recall that on the group $S$ the convolution of two functions is defined by  
\[
f\ast g(x)= \int_S f(y)\, g(y^{-1}x)\wrt \mu(y)\,.
\]
Since the Laplace--Beltrami operator $\cL$ on $S$ is left-invariant, its heat and Poisson semigroups $\cH_t$ and $\cP_t$ can be realised by the right convolution with the heat and Poisson kernels $h_t$ and $p_t$, i.e.
\[
\cH_t f = f\ast h_t, \qquad \cP_t f = f \ast p_t.
\]
As we shall see, spherical analysis on $S$ allows us to obtain precise estimates for the heat and Poisson kernels, which will provide a sharper picture with respect to inclusions and noninclusions of various Hardy type spaces.

\subsection{Estimates for the heat maximal operator on \texorpdfstring{$S$}{S}}
We start with a technical lemma, which will be useful in our estimates for the heat kernel on $S$.

\begin{lemma}\label{l: sup_estimate}
Let $a \in (0,\infty)$.
\begin{enumerate}[label=(\roman*)]
\item\label{en:sup_estimate_sup}
If $\vep,k \in [0,\infty)$, then
\[
\sup_{t \geq 1} t^{-\vep} (1+t^{-1/2}|r-at|)^{-k} \asymp r^{-\min\{\vep,k\}}.
\]
for all $r \in [1,\infty)$. The implicit constants may depend on $\vep$, $a$ and $k$, but not on $r$.
\item\label{en:sup_estimate_int}
If $k>1$, then
\[
\int_1^{\infty} (1+t^{-1/2}|r-at|)^{-k} \wrt r \leq C \sqrt{t}.
\]
\end{enumerate}
\end{lemma}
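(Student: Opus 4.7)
\textbf{Plan for Part \ref{en:sup_estimate_sup}.} The quantity to be controlled is a product of a decreasing factor $t^{-\vep}$ in $t$ and a factor $(1+t^{-1/2}|r-at|)^{-k}$ that peaks at $t=r/a$. The natural strategy is therefore to test the supremum against the two competing choices $t=r/a$ (which makes the second factor equal to $1$) and $t=1$ (which maximizes the first factor under the constraint $t\geq 1$). Inserting $t=r/a$ yields, up to constants depending on $a$, a lower bound of size $r^{-\vep}$, while $t=1$ yields $\asymp (1+r)^{-k} \asymp r^{-k}$. Since $r\geq 1$, taking the larger of these gives a lower bound of order $r^{-\min\{\vep,k\}}$.

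For the matching upper bound I would split the admissible range of $t$ into three regimes, assuming from the outset that $r\geq 2a$ (the small-$r$ case is absorbed into the implicit constants since the sup is bounded). In the regime $1\leq t\leq r/(2a)$ one has $|r-at|\geq r/2$, so
\[
t^{-\vep}(1+t^{-1/2}|r-at|)^{-k}\leq C\, t^{k/2-\vep} r^{-k},
\]
and maximising $t^{k/2-\vep}$ over $[1,r/(2a)]$ produces at worst $r^{-k/2-\vep}$ or $r^{-k}$, both of which are $\leq r^{-\min\{\vep,k\}}$ because $k/2+\vep\geq \min\{\vep,k\}$. In the intermediate regime $r/(2a)<t<2r/a$ we have $t\asymp r$, so $t^{-\vep}\asymp r^{-\vep}$ and the second factor is at most $1$, giving the bound $r^{-\vep}\leq r^{-\min\{\vep,k\}}$. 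Finally, for $t\geq 2r/a$ we have $|r-at|\geq at/2$, so the product is controlled by $C\, t^{-\vep-k/2}\leq C\, r^{-\vep-k/2}\leq C\, r^{-\min\{\vep,k\}}$. Combining the three cases yields the upper bound.

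\textbf{Plan for Part \ref{en:sup_estimate_int}.} This is a direct change of variables. Setting $u=t^{-1/2}(r-at)$, so that $\wrt r=t^{1/2}\wrt u$, one gets
\[
\int_1^{\infty}(1+t^{-1/2}|r-at|)^{-k}\wrt r
= t^{1/2}\int_{t^{-1/2}(1-at)}^{\infty}(1+|u|)^{-k}\wrt u
\leq t^{1/2}\int_{-\infty}^{\infty}(1+|u|)^{-k}\wrt u,
\]
and the last integral is finite since $k>1$.

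\textbf{Main obstacle.} Neither part involves a deep obstacle; the only delicate point is verifying, in the upper-bound argument for Part \ref{en:sup_estimate_sup}, that each of the three exponents $k$, $k/2+\vep$, $\vep+k/2$ arising from the case analysis is $\geq \min\{\vep,k\}$, so that all cases collapse to the desired bound $r^{-\min\{\vep,k\}}$. This is an elementary check but must be carried out for both sub-ranges $\vep\leq k$ and $\vep>k$.
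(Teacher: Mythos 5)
Your proposal is correct and follows essentially the same route as the paper: for part (i) both arguments test the supremum at $t\asymp r$ and at $t=1$ for the lower bound and split the range of $t$ into regimes where $|r-at|\asymp r$, $t\asymp r$, or $t\gtrsim r$ for the upper bound (the paper uses two regions where you use three, which changes nothing); for part (ii) your change of variables is a marginally slicker packaging of the paper's split of the integral at $|r-at|=\sqrt{t}$. The elementary exponent checks you flag are exactly the ones the paper carries out, and they go through.
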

\begin{proof}
We first prove part \ref{en:sup_estimate_sup}. Note that, if $r \leq a+1$, then, trivially,
\[
\sup_{t \geq 1} t^{-\vep} (1+t^{-1/2}|r-at|)^{-k} \asymp 1 \asymp r^{-\min\{\vep,k\}}.
\]
So we only need to consider the range where $r \geq a+1$.

Assuming that $r \geq a+1$, we first observe that
\[
\sup_{t \geq r/(a+1)} t^{-\vep} (1+t^{-1/2}|r-at|)^{-k} \asymp r^{-\vep}.
\]
Indeed, the lower bound is immediately obtained by taking $t=r/a$, while the upper bound simply follows by the majorisation $(1+t^{-1/2}|r-at|)^{-k} \leq 1$.

Let us now consider the range $1 \leq t \leq r/(a+1)$. Here $|r-at| \asymp r$, and $r/t^{1/2} \geq r/t \geq 1$, so 
\[
t^{-\vep} (1+t^{-1/2}|r-at|)^{-k} \asymp t^{-\vep+k/2} r^{-k} 
\]
and therefore
\[
\sup_{1 \leq t \leq r/(a+1)} t^{-\vep} (1+t^{-1/2}|r-at|)^{-k} 
\asymp \begin{cases}
r^{-k/2-\vep} &\text{if } k \geq 2\vep,\\
r^{-k} &\text{if } k \leq 2\vep.
\end{cases}
\]

The desired estimate follows by combining the ones above, and the proof of part \ref{en:sup_estimate_sup} is complete.

As for part \ref{en:sup_estimate_int}, we simply have
\[
\begin{split}
\int_1^\infty  ( 1+ t^{-1/2} | r-at | )^{-k} \wrt r &= \int_{|r-at|<\sqrt{t}} + \int_{|r-at| \geq \sqrt{t}} \\
&\leq 2\sqrt{t} + t^{k/2} \int_{|r| \geq \sqrt{t}} |r|^{-k} \wrt r \\
&\leq C \sqrt{t}, 
\end{split}
\]
provided $k > 1$.
\end{proof}

Using spherical Fourier analysis, we can now deduce a number of precise estimates for the heat kernel $h_t$.
Related results on any symmetric space of the noncompact type may be found in \cite{A1}.

\begin{proposition}\label{p: heat}
Suppose that $\ga$ and $c$ are real numbers and $\ga \geq 0$.
\begin{enumerate}[label=(\roman*)]
\item\label{en:heat_lower} 
For each radial compactly supported distribution $\psi$ on $S$ such that $\wt \psi(iQ/2) \neq 0$
there exists $\bar r \geq 1$ such that, for all $c \in \BR$,
\begin{equation}\label{f: pointwiselowerbd}
\sup_{t\geq 1} t^c |\cL^\gamma (\cH_t \psi)(x)|
\geq C \, \frac{\e^{-Q\mod{x}}}{\mod{x}^{(\gamma+1)/2-c}}
\quant x \in B_{\bar r}(o)^\compl.
\end{equation}
In particular, $\sup_{t\geq 1} t^c |\cL^\gamma (\cH_t \psi)| \notin L^1(S)$ whenever $c \geq (\gamma-1)/2$.

\item\label{en:heat_upper}
If $c \leq (\gamma+1)/2$, then the bound
\begin{equation}\label{f: pointwiseupperbd}
\sup_{t\geq 1} t^c |\cL^\gamma h_t(x)|\leq C\,\frac{\e^{-Q|x|}}{(1+|x|)^{(\gamma+1)/2-c}}\qquad \forall x\in S
\end{equation}
holds whenever $\ga \in \BN$ or $c > -(\ga+1)/2$.
\item\label{en:heat_integrability}
If $c < (\gamma-1)/2$, then $\sup_{t\geq 1} t^c |\cL^\gamma h_t| \in L^1(S)$.
\item\label{en:heat_l1}
For all $t>0$,
\[
\bignorm{\cL^\gamma h_t}{L^1}
\asymp 
\begin{cases}
t^{-\gamma} & \text{if } t \leq 1, \\
t^{-\gamma/2} & \text{if } t > 1.
\end{cases}
\]
\end{enumerate}
\end{proposition}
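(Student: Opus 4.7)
The plan is to base the proof on the spherical Fourier inversion applied to $\cL^\gamma h_t$, whose spherical transform equals $(\lambda^2+Q^2/4)^\gamma e^{-t(\lambda^2+Q^2/4)}$ (times $\widetilde\psi(\lambda)$ in the case of $\cL^\gamma(\cH_t\psi)$). Substituting the expansion $\phi_\lambda = \bc(\lambda)\Phi_\lambda + \bc(-\lambda)\Phi_{-\lambda}$ from Lemma~\ref{l: sphestimates}\ref{en:sphestimates_sphexp}, exploiting the $\lambda\mapsto-\lambda$ symmetry, and using the series for $\Phi_\lambda$, one obtains the representation
\[
\cL^\gamma h_t(r) = C\, e^{-tQ^2/4 - Qr/2} \sum_{\ell\geq 0} e^{-\ell r} \int_\BR (\lambda^2+Q^2/4)^\gamma e^{-t\lambda^2+i\lambda r}\, \Gamma_\ell(\lambda)\,\bc(-\lambda)^{-1} \wrt\lambda,
\]
which will be the common starting point for parts~\ref{en:heat_lower}--\ref{en:heat_integrability}.

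For parts~\ref{en:heat_lower} and~\ref{en:heat_upper} the plan is to shift the contour from $\BR$ upward to $\{\Im\lambda = Q/2\}$; this is permissible because $\bc(-\lambda)^{-1}$ is holomorphic in $\{\Im\lambda>-Q/2\}$ (Lemma~\ref{l: sphestimates}\ref{en:sphestimates_cholo}) and $\Gamma_\ell(\lambda)$ has polynomial growth in the closed upper half-plane (Lemma~\ref{l: sphestimates}\ref{en:sphestimates_sphexp}), while the branch point of $(\lambda^2+Q^2/4)^\gamma$ at $\lambda = iQ/2$ is circumvented by a small semicircular indentation, whose contribution vanishes with its radius as $\gamma\geq 0$. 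The crucial gain is that on the shifted contour the factor $e^{i\lambda r}$ contributes $e^{-Qr/2}$, combining with the prefactor to yield the claimed $e^{-Qr}$ decay. Setting $\lambda = \sigma+iQ/2$ one finds $(\lambda^2+Q^2/4) = \sigma(\sigma+iQ)$, which vanishes of order $\gamma$ at $\sigma=0$; combined with the Gaussian $e^{-t\sigma^2}$ of width $t^{-1/2}$ and the symbol estimates on $\bc^{-1}$ from Lemma~\ref{l: sphestimates}\ref{en:sphestimates_csymb}, this vanishing will produce the sharp polynomial factor $(1+r)^{-(\gamma+1)/2+c}$ after the supremum over $t\geq 1$; the tail $\ell\geq 1$ is absorbed by $e^{-\ell r}$ for $r$ away from $0$. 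For the lower bound \eqref{f: pointwiselowerbd}, the idea is to choose $t\asymp r/Q$ (the optimiser of the exponent) so that the $\sigma=0$ contribution dominates: since $\bc(-iQ/2)^{-1} = \Gamma_0(iQ/2) = 1$ and $\widetilde\psi(iQ/2)\neq 0$ by hypothesis, the leading coefficient of the resulting $1/r$ asymptotic is nonzero.

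Part~\ref{en:heat_integrability} will follow at once from part~\ref{en:heat_upper} by integration against the volume density $A(r)\asymp e^{Qr}$ on $[1,\infty)$: the exponential factors cancel, leaving $\int_1^\infty (1+r)^{c-(\gamma+1)/2}\wrt r$, which converges precisely when $c<(\gamma-1)/2$. For part~\ref{en:heat_l1}, the small-time estimate $\|\cL^\gamma h_t\|_{L^1}\asymp t^{-\gamma}$ will be obtained by standard local Euclidean-type heat-kernel scaling, valid thanks to the positive injectivity radius of $S$. For $t>1$, I would instead shift the contour to the saddle $\{\Im\lambda = r/(2t)\}$ of $-t\lambda^2+i\lambda r$ to obtain, at each fixed $t$, the finer pointwise bound $|\cL^\gamma h_t(r)| \leq C\, e^{-tQ^2/4-Qr/2-r^2/(4t)}\cdot(\text{polynomial in }r,t)$; integrating this against $A(r)\wrt r$ yields a Gaussian in $r$ of width $\sqrt t$ centred at $r=Qt$, evaluating to $t^{-\gamma/2}$ after tracking the polynomial prefactors, and the matching lower bound comes from part~\ref{en:heat_lower} restricted to this Gaussian window. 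The main technical obstacle will be the careful bookkeeping of these two distinct contour shifts (to $\Im\lambda = Q/2$ for the uniform-in-$t$ estimates, to $\Im\lambda = r/(2t)$ for the fixed-$t$ estimates) and the delicate balance of the vanishing factor $\sigma^\gamma$ against the Gaussian width $t^{-1/2}$, which is exactly what produces the sharp exponent $(\gamma+1)/2$ in \eqref{f: pointwiseupperbd}--\eqref{f: pointwiselowerbd}.
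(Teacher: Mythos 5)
Your architecture coincides with the paper's: spherical inversion, the expansion of $\phi_\la$ from Lemma~\ref{l: sphestimates}~\ref{en:sphestimates_sphexp}, a contour shift to $\Im\la=Q/2$ to produce the factor $\e^{-Qr}$, a choice $t\asymp r/Q$ for the lower bound, and integration of the pointwise bounds for parts~\ref{en:heat_integrability} and~\ref{en:heat_l1}. However, there are genuine gaps at the two points where the argument is delicate. In part~\ref{en:heat_lower}, your justification of the nonvanishing of the leading coefficient fails: after the shift, the main term is (a nonzero constant times) $\int\eta(\sigma)(i\sigma)^{\ga}\e^{-t\sigma^2}\e^{i\sigma(r-Qt)}\wrt\sigma$, and if you take $t$ to be the exact optimiser $r/Q$ the oscillation disappears and the integral equals $2\cos(\pi\ga/2)\int_0^\infty\eta(\sigma)\sigma^\ga\e^{-t\sigma^2}\wrt\sigma$, which vanishes whenever $\ga$ is an odd integer (e.g.\ $\ga=1$, a case the paper uses). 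The nonvanishing of $\bc(-iQ/2)^{-1}$ and of $\wt\psi(iQ/2)$ does not rescue this. The paper's fix is to take $t=\bar t(r)=(r-A\sqrt r)/Q$, so that after rescaling the main term converges to $\hat F(-A)$ with $F(v)=\e^{-v^2/Q}(iv)^\ga$, and to choose $A$ so that $\hat F(-A)\neq 0$, which is possible since $F\not\equiv 0$.

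In part~\ref{en:heat_upper}, you attribute the factor $(1+r)^{c-(\ga+1)/2}$ to the balance of the vanishing $\sigma^\ga$ against the Gaussian width, but that balance only yields $|I(t,r)|\leq C t^{-(\ga+1)/2}$ \emph{uniformly in $r$}, whence $\sup_{t\geq 1}t^{c-(\ga+1)/2}=O(1)$ (attained at $t=1$) and no decay in $r$ at all. The decay must come from the smallness of the integral when $t$ is far from $r/Q$, i.e.\ from the oscillatory factor $\e^{i\la(r-Qt)}$: the paper integrates by parts $k$ times to get $|I(t,r)|\leq C_k\,t^{-(\ga+1)/2}(1+t^{-1/2}|r-Qt|)^{-k}$ and then applies Lemma~\ref{l: sup_estimate}. (Your saddle-point shift to $\Im\la=r/(2t)$, which you reserve for part~\ref{en:heat_l1}, would also deliver this, but you do not invoke it here.) Relatedly, for noninteger $\ga$ one can integrate by parts only $\lceil\ga\rceil$ times before $\partial_\la^k(i\la)^\ga$ becomes nonintegrable at the origin; this is exactly why the statement carries the caveat ``$\ga\in\BN$ or $c>-(\ga+1)/2$'', and the paper needs a complex-interpolation step (Hadamard's three-line theorem applied to $\cL^z h_t$) that your proposal does not address. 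Finally, the lower bound in part~\ref{en:heat_l1} does not follow ``from part~\ref{en:heat_lower} restricted to a Gaussian window'': part~\ref{en:heat_lower} bounds a supremum over $t$, not $|\cL^\ga h_t(r)|$ at a fixed $t$ over a range of $r$ of length $\sqrt t$; the paper instead obtains it in one line by evaluating the spherical transform of $\cL^\ga h_t$ at $\la=t^{-1/2}+iQ/2$ (resp.\ $\la=t^{-1/2}$ for $t\leq 1$) and using that this is dominated by the $L^1$ norm.
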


\begin{remark} 
If $\psi \in L^1(S)$ is radial, then $\wt \psi(iQ/2) = \int_S \psi \,d \mu$ (see Lemma \ref{l: sphestimates}~\ref{en:sphestimates_sphone}), so the assumption $\wt \psi(iQ/2) \neq 0$ in part \ref{en:heat_lower} above has a clear ``geometric'' meaning. On the other hand, the pointwise lower bound in part \ref{en:heat_lower} applies not only to (compactly supported) radial functions $\psi \in L^1(S)$, but also more generally to radial distributions, such as the Dirac delta $\delta_o$. Indeed, in the case where $\psi = \delta_o$, the upper bound in part \ref{en:heat_upper} shows  (for certain ranges of the parameters $\gamma$ and $c$) the optimality of the lower bound in part \ref{en:heat_lower}.
\end{remark}

\begin{proof}
We first prove part \ref{en:heat_lower}.
Let us immediately observe that the nonintegrability of $\sup_{t\geq 1} t^c |\cL^\gamma (\cH_t \psi)|$ when $c \geq (\gamma-1)/2$ follows from the pointwise estimate \eqref{f: pointwiselowerbd} and the integration formula \eqref{f: misuraDR} for radial functions. Thus, we are reduced to proving  \eqref{f: pointwiselowerbd} for any $c \in \BR$.

Up to replacing $\psi$ with $\psi/\wt\psi(iQ/2)$, we may and shall assume that $\wt\psi(iQ/2) = 1$.
Observe that, by spherical Fourier analysis, for all $r \geq 0$,
\begin{equation} \label{f: heat on cLpsi orig}
\cL^\gamma\big(\cH_t \psi \big) (r)
=  \ir  \wt \psi (\la) \, \big(\la^2+Q^2/4\big)^\gamma
             \, \e^{-t (\la^2+Q^2/4)} \,  \phi_{\la}({r})\,|\bc(\la)|^{-2} \wrt \la 
\end{equation}
and, if $r>0$, by Lemma \ref{l: sphestimates}~\ref{en:sphestimates_sphexp},
\begin{equation} \label{f: heat on cLpsi}
\cL^\gamma\big(\cH_t \psi \big) (r)
= 2\, \ir  \wt \psi (\la) \, \big(\la^2+Q^2/4\big)^\gamma
             \, \e^{-t (\la^2+Q^2/4)} \,  \Phi_{\la}({r})\,  \bc(-\lambda)^{-1} \wrt \la\,, 
\end{equation}
where we have used the fact that $\wt \psi$ is even. 

Assume that $r\geq 1$ and $t\geq 1$.
By the Paley--Wiener theorem for the spherical Fourier transform \cite[Theorem 3.6]{PS}, since $\psi$ is a compactly supported distribution,
$\wt \psi$ extends to an entire function of exponential type. 
In particular, there exist $m,R>0$ such that
\begin{equation}\label{f: crescitapsitilde}
	\bigmod{\wt \psi(\la)}
	\leq C \, 
	(1+\mod{\la})^{m} \, \e^{R \,\mod{\Im{\la}} }\qquad \forall \la\in \BC\,.
\end{equation}
Additionally, we can write
\[
\big(\la^2+Q^2/4\big)^\gamma = \big(Q/2+i\lambda\big)^\gamma \big(Q/2-i\lambda\big)^\gamma
\]
for all $\lambda \in \BC$ with $\mod{\Im \lambda} \leq Q/2$ and $\lambda \neq \pm iQ/2$; here we choose the branch of $w \mapsto w^\gamma$ on the complex plane slit along the half-line $(-\infty,0]$ that agrees with the arithmetic $\gamma$ power on $(0,\infty)$.
By Cauchy's theorem, we may shift the contour of integration
 in \eqref{f: heat on cLpsi}
from the real line to $\BR + i Q/2$, and obtain that  
\begin{equation} \label{f: heat on cLpsi contour}
\cL^\gamma \big(\cH_t \psi \big) (r)
=  2\, \ir \wt \psi (\la+iQ/2) \, (i\la)^\gamma \, p(\lambda)
             \, \e^{-t (\la^2+i\lambda Q)} \,  \Phi_{\la+iQ/2}({r})\, \wrt \la \,,
\end{equation}
where $p(\la)= \big(Q-i\la\big)^\gamma \, \bc(-\lambda-iQ/2)^{-1}$.
In particular, from Lemma \ref{l: sphestimates}~\ref{en:sphestimates_csymb} we deduce that the function $p$, thought of as a function of a real variable, belongs to a H\"ormander symbol class, namely,
\begin{equation}\label{f: pezzopol}
p \in S^M, \quad \text{where } M = \gamma + (n-1)/2.
\end{equation}

By Lemma \ref{l: sphestimates}~\ref{en:sphestimates_sphexp}, we can write 
\[
\e^{Qr} \Phi_{\la+iQ/2}({r})=\e^{i\lambda r}+R_{\lambda}(r)\,,
\]
where
\begin{equation}\label{f: expdecay}
|R_{\lambda}(r)|\leq C_\delta \, \e^{-\delta r}
\end{equation}
for all $r \geq 1$, $\delta\in (0,1)$ and $\lambda \in \BR$. Hence
\begin{equation}\label{f: splitIJ}
\begin{split}
\frac{1}{2} \e^{Qr} \cL^\ga \big(\cH_t \psi \big) (r)
& =  \ir  \wt \psi (\la+iQ/2) \, (i\la)^\gamma \, p(\la)
             \, \e^{-t \la^2} \, \e^{i\lambda (r-tQ)}  \wrt \la \\
&+ \ir \wt \psi (\la+iQ/2) \, (i\la)^\gamma \, p(\la)
             \, \e^{-t (\la^2+i\lambda Q)} \,  R_{\lambda}(r)  \wrt \la \\
&=I(t,r)+J(t,r)\,.
\end{split}
\end{equation}

Now, from \eqref{f: crescitapsitilde}, \eqref{f: pezzopol} and \eqref{f: expdecay} we deduce that
\begin{equation}\label{f: est_J_heat}
|J(t,r)| 
	\leq C_\delta \, \e^{-\delta r} \ir |\la|^\gamma (1+|\la|)^{M+m} 
	\, \e^{-t \,\la^2} \wrt\la\leq C_\delta \, t^{-(1+\gamma)/2} \e^{-\delta r} \,,
\end{equation}
for every $t\geq 1$ and $\delta\in (0,1)$.

Next, we show that the main contribution in $I(t,r)$ comes from integration in a neighbourhood of 
the origin.  Indeed, denote by $\eta$ a smooth even function on $\BR$ with support contained in
$[-2,2]$ such that $0\leq \eta \leq 1$ and $\eta$ is equal to $1$ in $[-1,1]$. Then, we may write $I(t,r)$ as $I_\eta(t,r) + I_{1-\eta}(t,r)$, where 
\begin{align*}
I_\eta(t,r)
&=  \ir \eta(\la) \, \wt \psi (\la+iQ/2) \, (i\la)^\gamma \, p(\lambda)
             \, \e^{-t \la^2} \,  \e^{i\lambda (r-tQ)} \wrt \la \,, \\
I_{1-\eta}(t,r)
&=  \ir \big(1-\eta(\la)\big) \wt \psi (\la+iQ/2) \, (i\la)^\gamma \, p(\lambda)
             \, \e^{-t \la^2} \,  \e^{i\lambda (r-tQ)}  \wrt \la \,.
\end{align*}
Now, by \eqref{f: crescitapsitilde} and \eqref{f: pezzopol},
\begin{equation}\label{f: est_Ine_heat}
\begin{split}
\bigmod{I_{1-\eta}(t,r)}
& \leq C\, \int_{\mod{\la} \geq 1}  \, \e^{-t \la^2}   (1+|\lambda|)^{M+m+\gamma}  \wrt \la \\
& \leq     C\, \e^{-t/2}\,.
\end{split}
\end{equation}
Next, we estimate $I_\eta(t,r)$. 
Since $\lambda \mapsto \wt\psi(\la+iQ/2) p(\lambda)$ is smooth near $\lambda = 0$ and $\wt\psi(iQ/2)=1$, we deduce that
\[
\wt\psi(\la+iQ/2) p(\la) = p(0) + O(|\la|)
\]
for $|\lambda| \leq 2$.
Therefore
\[
\begin{split}
I_\eta(t,r)
&= 
p(0)  \ir \eta(\la)  \, (i\la)^\gamma  
             \, \e^{-t \la^2} \,  \e^{i\lambda (r-tQ)}  \wrt \la 
             +  \int_{-2}^{2}  \e^{-t \la^2}   O(|\la|^{1+\gamma})
              \wrt \la \\
             &=p(0) \,I_{\eta}^0(t,r) + I^1_\eta(t,r),
\end{split}
\]
and clearly
\begin{equation}\label{f: est_Ie1_heat}
|I_{\eta}^1(t,r)|\leq C \int_{-2}^{2} |\la|^{1+\gamma}
             \, \e^{-t \la^2}    \wrt \la 
						\leq C \, t^{-(2+\gamma)/2}\,.
\end{equation}
Since $p(0) = Q^\gamma \, \bc(-iQ/2)^{-1} \neq 0$ by \eqref{f: HC}, it remains to consider $I_{\eta}^0(t,r)$. 

Set $\bar{t}(r)=\frac{r-A\sqrt r}{Q}$, where $A \in \BR$ is a constant to be determined.
Clearly, for all $r$ sufficiently large, we have that $\bar t(r) \geq 1$, and
moreover
\[
\begin{split}
I_\eta^0(\bar t(r),r)
&= \ir \eta(\la)  \, (i\la)^\gamma  \, \e^{-\bar{t}(r) \la^2} \,  \e^{iA\lambda \sqrt r}  \wrt \la \\
&= r^{-(1+\gamma)/2} \,\ir \eta(v/{\sqrt r})   \, \e^{-\bar{t}(r) v^2/r} \,  \e^{iAv}  (iv)^\gamma \wrt v\,,
\end{split}
\]
where we used the change of variables $\la \sqrt r = v$.
Notice that 
\[
\frac{\bar{t}(r)}{r} = \frac{1}{Q} - \frac{A}{Q\sqrt r}, 
\]
which tends to $1/Q$
as $r$ tends to infinity.  An application of the dominated convergence theorem shows that
\[
\lim_{r \to \infty} \ir \eta(v/{\sqrt r})   
             \, \e^{-\bar{t}(r) v^2/r} \,  \e^{iAv}  (iv)^\gamma \wrt v 
						= \, \ir  \e^{iAv} \, \e^{-v^2/Q} \, (iv)^\gamma \wrt v ,
\]
as $\eta (0) = 1$;
the last integral is the value at $-A$ of the Fourier transform of 
$v \mapsto \e^{-v^2/Q} (iv)^\gamma$, so we can choose $A$ such that the integral does not vanish. With this choice of $A$, 
there exist $C, r_0 \geq 1$ such that, for all $r \geq  r_0$, we have $\bar t(r) \geq 1$ and
\[
|I_{\eta}^0(\bar t(r),r)| \geq C r^{-(1+\gamma)/2} .
\]
Note, on the other hand, that, by the previous estimates \eqref{f: est_J_heat}, \eqref{f: est_Ine_heat} and \eqref{f: est_Ie1_heat},
\[
|I_{\eta}^1(\bar t(r),r)|, |J(\bar t(r),r)|, |I_{1-\eta}(\bar t(r),r)| \leq C r^{-(2+\gamma)/2},
\]
since $\bar t(r) \asymp r$ for $r$ large.
By combining the above estimates, we conclude that, for $r$ sufficiently large,
\[
\sup_{t>1} t^c \bigmod{\cL^\gamma (\cH_t \psi) (r)}
\geq \bar{t}(r)^c  \bigmod{\cL^\gamma (\cH_{\bar{t}(r)} \psi) (r)} 
\geq C r^{c-(\gamma+1)/2} \, \e^{-Q r},
\]
which proves \eqref{f: pointwiselowerbd} and thus part \ref{en:heat_lower}.
 
We now prove part \ref{en:heat_upper}, that is, the pointwise upper bound \eqref{f: pointwiseupperbd}. Note that $\cL^\gamma h_t =\cL^\gamma(\cH_t \delta_o)$, hence all the above computations can be applied with $\psi=\delta_o$. In particular, by \eqref{f: heat on cLpsi orig} and 
Lemma \ref{l: sphestimates}~\ref{en:sphestimates_csymb}-\ref{en:sphestimates_sphbdd}, for every $t\geq 1$,
\begin{equation} \label{f: rsmall}
|\cL^\gamma h_t(r)|
						\leq C \, \e^{-tQ^2/4} 
						\,,
\end{equation}
where $C$ does not depend on $t$.  This proves \eqref{f: pointwiseupperbd} for $x\in B_{1}(o)$, irrespective of the values of $\gamma$ and $c$.

Take now $r \geq 1$. Then, by \eqref{f: splitIJ},
\begin{equation} \label{f: cLht}
\begin{split}
\frac{1}{2} \e^{Qr} \cL^\gamma h_t   (r)
& = \ir  (i\la)^\gamma \, p(\la) \, \e^{-t \la^2}  \e^{i\lambda (r-tQ)} \wrt \la \\
&+ \ir (i\la)^\gamma \, p(\la) \, \e^{-t (\la^2+i\lambda Q)} \,  R_{\lambda}(r)  \wrt \la \\
&=I(t,r)+J(t,r)\,,
\end{split}
\end{equation}
where, as before, by \eqref{f: est_J_heat},
\begin{equation}\label{f: estJ_heat_bis}
|J(t,r)| \leq C_\delta \,  t^{-(1+\gamma)/2} \e^{-\delta r},
\end{equation}
for every $t\geq 1$ and $\delta \in (0,1)$. 
Moreover, as $p \in S^M$ (see \eqref{f: pezzopol}), an estimate for $I(t,r)$ is immediately obtained by taking absolute values inside the integral:
\begin{equation}\label{f: basic_est_I}
|I(t,r)| \leq C \ir |\la|^\gamma (1+|\lambda|)^M \e^{-t\lambda^2} \wrt\la \leq C \, t^{-(1+\gamma)/2}
\end{equation}
for every $r,t \geq 1$. The above estimates are enough to conclude the proof of \eqref{f: pointwiseupperbd} when $c=(\gamma+1)/2$. However, when $c<(\gamma+1)/2$,
the estimate for $I(t,r)$ in \eqref{f: basic_est_I} is not enough, as it does not give any decay in $r$.
To improve on this, we will use integration by parts in order to take advantage of the oscillatory term $\e^{i\lambda (r-tQ)}$.

Let us first assume that $\gamma \in \BN$.
If we integrate by parts $k$ times in the expression for $I(t,r)$, we obtain
\begin{equation}\label{f: int_byparts}
I(t,r)= \frac{i^k}{(r-Qt)^k} \, \ir \partial_\la^k [ (i\la)^\gamma p(\la) \, \e^{-t \la^2}] \,  \e^{i\lambda(r-tQ)}  \wrt \la.
\end{equation}
Recall from \eqref{f: pezzopol} that $p \in S^M$. Hence, by arguing inductively,
one readily shows that, for every nonnegative integer $k$, the $k$th derivative $\partial_\la^k [\la^\gamma\,  p(\la) \, \e^{-t \la^2}]$ 
is a finite sum of terms of the form $t^a\, (i\la)^b \,q(\la) \,\e^{-t\lambda^2}$, where
\[
a,b,\ell \in \BN, \quad q \in S^{M-\ell} \quad\text{and}\quad b-2a-\ell = \gamma-k;
\]
in particular $a -b/2 \leq (k-\gamma)/2$ and
\begin{equation}\label{f: int_est_byparts}
\begin{split}
\int_\BR \bigmod{t^a\, (i\la)^b \,q(\la) \,\e^{-t\lambda^2}} \wrt\lambda &\leq C t^{a-b/2} \, \int_\BR (t \la^2)^{b/2} (1+|\la|)^{M-\ell} \e^{-t\la^2} \wrt\la \\
&\leq C t^{(k-\gamma-1)/2}
\end{split}
\end{equation}
for all $t \geq 1$.
Therefore, from \eqref{f: int_byparts}, we deduce that
\begin{equation}\label{f: byparts_est_I}
|I(t,r)| \leq C_k \, |r-tQ|^{-k} \, t^{(k-\gamma-1)/2}.
\end{equation}
By combining the previous estimates \eqref{f: estJ_heat_bis}, \eqref{f: basic_est_I} and \eqref{f: byparts_est_I}, we conclude that, for all $k\in\BN$ and $\delta \in (0,1)$ there exists a positive constant $C$ such that
\begin{equation}\label{f: lapheatestimate}
|\cL^\gamma h_t(r)|
\leq C\,\e^{-Qr}\,t^{-(\gamma+1)/2} \Big[ \e^{-\delta r} +  \big( 1+ t^{-1/2} \bigl| r-Qt\bigr| \big)^{-k} \Big]
\end{equation}
for all $t,r\geq 1$. Consequently, from Lemma \ref{l: sup_estimate}~\ref{en:sup_estimate_sup} we deduce that, for all $c \leq (\gamma+1)/2$ and $r \geq 1$,
\begin{equation}\label{f: lapheatmaxestimate}
\sup_{t \geq 1} t^c \, |\cL^\gamma h_t(r)| \leq C e^{-Qr} r^{-\min\{k,(1+\gamma)/2-c\}}.
\end{equation}
As in this case we can pick any $k \in \BN$, the above estimate completes the proof of the upper bound \eqref{f: pointwiseupperbd} and of part \ref{en:heat_upper} in the case where $\gamma \in \BN$.

Suppose instead that $\gamma$ is not an integer. In this case, we cannot integrate by parts in \eqref{f: int_byparts} arbitrarily many times, as repeated differentiation of the term $(i\lambda)^\gamma$ may produce a non-integrable singularity at $0$. Nevertheless, we can integrate by parts at least $\lceil \gamma \rceil$ times (note that $|\lambda|^{\gamma-\lceil \gamma\rceil }$ is locally integrable at $0$, as $\gamma-\lceil \gamma \rceil > -1$), and the previous argument can still be run (indeed, the estimate \eqref{f: int_est_byparts} remains true even for noninteger $b>-1$), thus \eqref{f: lapheatestimate} and \eqref{f: lapheatmaxestimate} are valid for $k = \lceil \gamma \rceil$. As a consequence, we may still deduce the upper bound \eqref{f: pointwiseupperbd} whenever $\lceil \gamma\rceil \geq (\gamma+1)/2-c$, that is, $c \geq (\gamma+1)/2 - \lceil \gamma \rceil$.

To further extend the range of validity of \eqref{f: pointwiseupperbd} and complete the proof of part \ref{en:heat_upper}, we make use of complex interpolation. Namely, the above argument leading to \eqref{f: lapheatestimate} can be applied, \emph{mutatis mutandis}, when the real exponent $\gamma$ is replaced by the complex exponent $z=\gamma+i\theta$, with $\gamma \in (-1,\infty)$ and $\theta \in \BR$. In this case, we can integrate by parts $\lceil \gamma \rceil$ times and, instead of \eqref{f: lapheatestimate}, we obtain, for any $\delta \in (0,1)$, the estimate
\begin{equation}\label{f: lapheatestimate_complex}
|\cL^z h_t(r)|
\leq C (1+|\theta|)^{\lceil \gamma \rceil} \, \e^{-Qr}\,t^{-(\gamma+1)/2} \Big[ \e^{-\delta r} +  \big( 1+ t^{-1/2} \bigl| r-Qt\bigr| \big)^{-\lceil\gamma\rceil} \Big]
\end{equation}
for all $t,r \geq 1$, where the constant $C$ does not depend on $\theta$. Consequently, for any $\vep,\delta \in (0,1)$, we deduce the estimate 
\begin{equation}\label{f: lapheatestimate_complex_interpol}
 \left|\e^{z^2} \cL^z h_t(r)\right|
\leq C \,\e^{-Qr}\,t^{-(\gamma+1)/2} \Big[ \e^{-\delta r} +  \big( 1+ t^{-1/2} \bigl| r-Qt\bigr| \big)^{\vep-\gamma-1} \Big]
\end{equation}
whenever $\gamma > -1$ and $\lceil \gamma \rceil - \gamma \geq 1-\vep$; again, the constant $C$ may depend on $\gamma,\vep,\delta$ but not on $\theta$. As $\cL^z h_t(x)$ is, for fixed $x$ and $t$, a holomorphic function of $z$ on the half-plane where $\Re z = \gamma > -1$ (cf.\ \eqref{f: heat on cLpsi orig}), an application of Hadamard's three-line theorem allows us to dispense with the constraint $\lceil\gamma\rceil - \gamma \geq 1-\vep$ and extend the validity of \eqref{f: lapheatestimate_complex_interpol} to all $\gamma > -1$: more precisely, for all $\gamma>-1$ and $\vep \in (0,1)$, there exists $\delta > 0$ such that
\eqref{f: lapheatestimate_complex_interpol} holds with $z=\gamma+i\theta$ for all $\theta \in \BR$ and $r,t \geq 1$.

If we now apply \eqref{f: lapheatestimate_complex_interpol} with $\theta=0$ and $\gamma \geq 0$, then Lemma \ref{l: sup_estimate}~\ref{en:sup_estimate_sup} yields, for all $\vep \in (0,1)$ and $c \leq (\gamma+1)/2$,
\begin{equation}\label{f: lapheatmaxestimate_interpol}
\sup_{t \geq 1} t^c \, |\cL^\gamma h_t(r)| \leq C e^{-Qr} r^{-\min\{1+\gamma-\vep,(1+\gamma)/2-c\}}.
\end{equation}
On the other hand, if $c> -(1+\gamma)/2$, then $(1+\gamma)/2-c < 1+\gamma$, so we can pick $\vep>0$ sufficiently small that $1+\gamma-\vep \geq (1+\gamma)/2-c$, and \eqref{f: lapheatmaxestimate_interpol} gives the desired upper bound \eqref{f: pointwiseupperbd} in full generality. This completes the proof of part \ref{en:heat_upper}.

We now prove part \ref{en:heat_integrability}. Let us first note that the integrability of $\sup_{t\geq 1} t^c |\cL^\gamma h_t|$ for $c<(\gamma-1)/2$ immediately follows from \eqref{f: misuraDR} and the pointwise bound \eqref{f: pointwiseupperbd}, provided the latter applies. Thus, in light of part \ref{en:heat_upper}, the aforementioned integrability does hold true whenever $\gamma$ is an integer or $c \in (-(\gamma+1)/2,(\gamma-1)/2)$. On the other hand, when $\gamma$ is not an integer, necessarily $\gamma > 0$, thus the interval $(-(\gamma+1)/2,(\gamma-1)/2)$ is not empty; as 
$\sup_{t\geq 1} t^c |\cL^\gamma h_t|$ is increasing in $c$, we conclude that the latter is integrable whenever $c < (\gamma-1)/2$, as required.

Finally, let us prove part \ref{en:heat_l1}.  We first prove upper estimates.  Since $\cL$ is a sectorial operator on $L^1$, by the moment inequality 
\cite[Proposition~6.6.4]{Haa} it is enough to consider the case where $\gamma \in \BN$. 
In this case, the estimate for $t \leq 1$ is known in greater generality (see, e.g., 
\cite[Theorem IX.1.3~(ii)]{VSC}, and note that $\cL^\gamma h_t = (-\partial_t)^\gamma h_t$).
If instead $t \geq 1$, then we can use the estimates from the proof of part \ref{en:heat_upper}. Namely, \eqref{f: rsmall} shows that
\[
\int_{B_1(o)} |\cL^\gamma h_t(x)| \wrt \mu(x) 
\leq C \, \e^{-t Q^2/4},
\]
thus the main contribution comes from integration over $B_1(o)^\compl$. From \eqref{f: lapheatestimate} and \eqref{f: misuraDR}, 
instead, we deduce that, for all $k \in \BN$ with $k>1$,
\[
\int_{B_1(o)^\compl} |\cL^\gamma h_t(x)| \wrt \mu(x) 
\leq \frac{C}{t^{(1+\gamma)/2}} \Big[ 1 + \int_1^\infty  \Big( 1+ t^{-1/2} \bigl| r-Qt\bigr| \Big)^{-k} \wrt r \Big] \leq C t^{-\gamma/2},
\]
where we applied Lemma \ref{l: sup_estimate}~\ref{en:sup_estimate_int} in the last inequality,
and the required upper estimate follows.

It remains to prove matching lower estimates.  From Lemma \ref{l: sphestimates}~\ref{en:sphestimates_sphbdd} it follows that, if $f$ is in $\lu{S}$, then 
\[
	\sup_{\la\in \bT_1} \, \bigmod{\wt f(\la)} 
	\leq \bignorm{f}{L^1(S)},
\]
where $\bT_1 \defeq \big\{ \la \in \BC: \bigmod{\Im{\la}} \leq Q/2\}$. As
\[
\big(\cL^\ga h_t\big)\wt{\phantom a}(\la)
= (\la^2+Q^2/4)^\ga \, \e^{-t(\la^2+Q^2/4)},
\]
we easily deduce that
\[
\|\cL^\ga h_t\|_{L^1} \geq \bigmod{\big(\cL^\ga h_t\big)\wt{\phantom a}(t^{-1/2}+iQ/2)} \asymp t^{-\gamma/2}
\]
for $t \geq 1$, and
\[
\|\cL^\ga h_t\|_{L^1} \geq \bigmod{\big(\cL^\ga h_t\big)\wt{\phantom a}(t^{-1/2})} \asymp t^{-\gamma}
\]
for $0 < t \leq 1$.
This concludes the proof of part \ref{en:heat_l1} and of the proposition.  
\end{proof}

The above estimates imply a number of inclusions and noninclusions between the spaces $\gXga{S}$ and $H^1_{\cH,c}(S)$, 
complementing those in Theorem \ref{t: maximal heat manifold}.

\begin{theorem} \label{t: inclusion heat}
Let
$\gamma \in (0,\infty)$ and $c \in \BR$.
\begin{enumerate}[label=(\roman*)]
\item\label{en:inclusion_heat_not}
The space $\fX^{2c+1}(S)$ is not included in $H^1_{\cH,c}(S)$ whenever $2c+1>0$.
\item\label{en:inclusion_heat_yes}
The space $\gXga{S}$ is included in $H^1_{\cH,c}(S)$ whenever $\gamma>2c+1$. 
\item\label{en:inclusion_heat_ghu}
$\ghu{S} = H^1_{\cH,c}(S)$ for all $c <-1/2$, but $\ghu{S} \not\subseteq H^1_{\cH,-1/2}(S)$.
\end{enumerate}
\end{theorem}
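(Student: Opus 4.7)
The plan is to combine the precise heat kernel estimates in Proposition \ref{p: heat} with the characterisation of $\gXga{S}$ provided by Proposition \ref{p: cinftyc}~\ref{en:cinftyc_car} and the right-convolution structure of $\cH_t$. The arguments for the three parts run in parallel and differ mainly in the choice of the parameter $\gamma$, so I would describe the main mechanism first and then specialise.

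I would begin with part \ref{en:inclusion_heat_yes}, which is the core positive result. Given $f \in \gXga{S}$ with $\gamma > 2c+1$, Proposition \ref{p: cinftyc}~\ref{en:cinftyc_car} says that $h \defeq \cL^{-\gamma}f$ belongs to $\ghu{S}$, and in particular to $L^1(S)$. The local part $\sup_{0<t\leq 1} \langle t\rangle^c |\cH_t f| = \maxHloc f$ is in $L^1(S)$ by Proposition \ref{p: H1Hloc}. For the global part, I use that, by left-invariance of $\cL$ and spectral calculus, $\cH_t f = \cL^\gamma \cH_t h = h \ast (\cL^\gamma h_t)$, which yields the pointwise bound
\[
\sup_{t\geq 1} t^c |\cH_t f|(x) \leq (|h| \ast \Phi)(x), \qquad \Phi(y) \defeq \sup_{t\geq 1} t^c |\cL^\gamma h_t(y)|.
\]
Young's inequality for left convolution on the non-unimodular group $S$ (with its left Haar measure $\mu$) then gives $\|\sup_{t\geq 1} t^c |\cH_t f|\|_{L^1} \leq \|h\|_{L^1} \|\Phi\|_{L^1}$, and the hypothesis $\gamma > 2c+1$ is exactly $c < (\gamma-1)/2$, so $\Phi \in L^1(S)$ by Proposition \ref{p: heat}~\ref{en:heat_integrability}. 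Combining both parts gives $\cH_*^c f \in L^1(S)$.

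For part \ref{en:inclusion_heat_not} I would exhibit an explicit witness. Choose any nonnegative nonzero radial $\psi \in C_c^\infty(S)$; since $\phi_{iQ/2}\equiv 1$ by Lemma \ref{l: sphestimates}~\ref{en:sphestimates_sphone}, the spherical transform $\wt\psi(iQ/2) = \int_S \psi \wrt\mu$ is strictly positive. Set $f \defeq \cL^{2c+1}\psi$, which lies in $\fX^{2c+1}(S)$ by Proposition \ref{p: cinftyc}~\ref{en:cinftyc_cinftyc} (the hypothesis $2c+1>0$ ensures we are dealing with a positive power of $\cL$). Functional calculus then gives $\cH_t f = \cL^{2c+1}(\cH_t \psi)$, and Proposition \ref{p: heat}~\ref{en:heat_lower} applied with $\gamma = 2c+1$ (so that $c = (\gamma-1)/2$ hits the threshold) yields $\sup_{t\geq 1} t^c |\cH_t f| \notin L^1(S)$, whence $\cH_*^c f \notin L^1(S)$, as required.

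Part \ref{en:inclusion_heat_ghu} is essentially the degenerate case $\gamma=0$ of the two preceding arguments. The inclusion $H^1_{\cH,c}(S) \subseteq \ghu{S}$ holds for all $c\in\BR$ by Theorem \ref{t: maximal heat manifold}~\ref{en:maxhpman_heat}. For the reverse inclusion when $c<-1/2$, given $f \in \ghu{S}$ I would apply the Young-type bound from part \ref{en:inclusion_heat_yes} directly to $\cH_t f = f \ast h_t$, obtaining
\[
\Bignorm{\sup_{t\geq 1} t^c |\cH_t f|}{L^1} \leq \|f\|_{L^1} \, \Bignorm{\sup_{t\geq 1} t^c |h_t|}{L^1},
\]
and the right-hand side is finite by Proposition \ref{p: heat}~\ref{en:heat_integrability} with $\gamma=0$; the local part is handled exactly as in part \ref{en:inclusion_heat_yes}. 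To disprove the inclusion at the endpoint $c=-1/2$, take again the radial bump $\psi$ from part \ref{en:inclusion_heat_not}; then $\psi \in C_c^\infty(S)\subset \ghu{S}$, while Proposition \ref{p: heat}~\ref{en:heat_lower} with $\gamma=0$ and $c=-1/2$ produces $\cH_*^{-1/2} \psi \notin L^1(S)$. Once this scheme is in place, the real obstacle is upstream in Proposition \ref{p: heat}: the sharp pointwise upper bounds on $\cL^\gamma h_t$ and the matching pointwise lower bounds on $\cL^\gamma \cH_t \psi$ do all the work, and the present theorem reduces to bookkeeping plus one application of Young's inequality.
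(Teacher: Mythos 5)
Your proposal is correct and follows essentially the same route as the paper: the same witnesses $\cL^{2c+1}\psi$ (respectively $\psi$) combined with Proposition \ref{p: heat}~\ref{en:heat_lower} at the threshold $c=(\gamma-1)/2$ for the noninclusions, and the same convolution identity $\cH_t f = (\cL^{-\gamma}f)\ast(\cL^\gamma h_t)$ with Young's inequality and Proposition \ref{p: heat}~\ref{en:heat_integrability} for the inclusions, with the local part handled via Proposition \ref{p: H1Hloc}. The only differences are presentational (order of the parts and the explicit remark about left-invariant convolution on the non-unimodular group, which the paper leaves implicit).
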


\begin{proof}
Let us prove part \ref{en:inclusion_heat_not}. Let $\delta=2c+1>0$. Take any radial nonnegative $\psi \in C^\infty_c(S)$, which is not identically zero; 
in particular $\wt \psi(iQ/2) = \int_S \psi \wrt\mu > 0$ (see Lemma \ref{l: sphestimates}~\ref{en:sphestimates_sphone}),
and moreover $\cL^\delta \psi \in \fX^\delta(S)$ by Proposition \ref{p: cinftyc}~\ref{en:cinftyc_cinftyc}. On the other hand, by
Proposition \ref{p: heat}~\ref{en:heat_lower}, 
$\maxHc(\cL^\delta \psi)$ is not in $L^1(S)$, whence $\cL^\delta \psi \notin H^1_{\cH,c}(S)$.

Let us now prove part \ref{en:inclusion_heat_yes}.
If $f \in \gXga{S}$, then, by Proposition \ref{p: cinftyc}~\ref{en:cinftyc_car}, $f \in \ghu{S}$ and $\cL^{-\gamma} f \in \ghu{S}$, with
\[
\|f\|_{\frh^1} + \|\cL^{-\gamma} f\|_{\frh^1} \leq C \|f\|_{\fX^\gamma}.
\]
In particular, by Proposition \ref{p: H1Hloc},
\[
\|f\|_{L^1} + \| \maxHloc f\|_{L^1} \leq C \|f\|_{\frh^1} \leq C \|f\|_{\fX^\gamma}.
\]
On the other hand,
\[
\cH_t f = (\cL^{-\gamma} f) * (\cL^\gamma h_t),
\]
whence
\[
\sup_{t\geq 1} t^c |\cH_t f| \leq |\cL^{-\gamma} f| * \sup_{t \geq 1} t^c |\cL^\gamma h_t|.
\]
By Proposition \ref{p: heat}~\ref{en:heat_integrability}, the latter supremum is integrable whenever $c<(\gamma-1)/2$; so, by Young's inequality,
\[
\left\|\sup_{t\geq 1} t^c |\cH_t f|\right\|_{L^1} \leq C \| \cL^{-\gamma} f\|_{L^1} \leq C  \| \cL^{-\gamma} f\|_{\frh^1} \leq C  \|f\|_{\fX^\gamma},
\]
as required.

As for part \ref{en:inclusion_heat_ghu}, the inclusion $H^1_{\cH,c}(S) \subseteq \ghu{S}$ follows from Theorem \ref{t: maximal heat manifold}~\ref{en:maxhpman_heat}; the remaining inclusion and noninclusion results are proved as in parts \ref{en:inclusion_heat_not} and \ref{en:inclusion_heat_yes}, by taking $\delta=\gamma = 0$.
\end{proof}

\begin{remark}
Theorems \ref{t: maximal heat manifold}~\ref{en:maxhpman_heat} and \ref{t: inclusion heat}~\ref{en:inclusion_heat_yes} imply that, for all $\gamma,\vep > 0$,
\[
H^1_{\cH,\gamma+\vep}(S) \subset \gXga{S} \subset H^1_{\cH,(\gamma-1)/2-\vep}(S).
\]
It would be interesting to know whether these inclusion ranges are sharp (this is the case for the second inclusion in the series, 
but we do not know about the first). If this were the case, we would have a substantially different situation compared to the case 
of the Poisson maximal operator discussed in Corollary
\ref{c: maximal heat manifold} above
(note that the gap between $(\gamma-1)/2$ 
and $\gamma$ increases with $\gamma$). On the other hand, in the ``limit case'' $\gamma = 0$ 
(with $\gXga{S}$ replaced by $\ghu{S}$), the first inclusion is certainly not sharp, given that 
$\ghu{S} = H^1_{\cH,-1/2-\vep}(S)$ for all $\vep > 0$.
\end{remark}

\subsection{Estimates of the Poisson maximal operator on \texorpdfstring{$S$}{S}} 

The following result is the analogue for the semigroups $\{\e^{-t\cL^\al}: t\geq 0\}$ of Proposition~\ref{p: heat}~\ref{en:heat_lower}: the needed estimates for the Poisson semigroups correspond to the case where $\al=1/2$.  

\begin{proposition} \label{p: maximal Poisson H3}
Let $\al$, $c$ and $\ga$ be real numbers such that $0<\al<1$ and $\ga \geq 0$.
 Let $\psi$ be a radial distribution with compact support 
such that $\wt \psi (iQ/2) \neq 0$.  Then there exists a positive constant $C$ and $\bar r \geq 1$ such that 
\begin{equation}\label{f: lowerbd_poisson_max}
	\sup_{t\geq 1}\, t^c\, \bigmod{\e^{-t\cL^\al}\big(\cL^{\ga}\psi\big) (x)}
	\geq C \, \frac{\e^{-Q\mod{x}}}{\mod{x}^{1+\ga-c\al}}
\quant x \in B_{\bar r}(o)^\compl.
\end{equation}
In particular, $\sup_{t\geq 1}\, t^c\, \bigmod{\e^{-t\cL^\al}\big(\cL^{\ga}\psi\big)} \notin L^1(S)$ whenever $c \geq \ga/\al$.
\end{proposition}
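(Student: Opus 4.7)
The plan is to parallel the proof of Proposition~\ref{p: heat}~\ref{en:heat_lower}, adapting it to the fractional semigroup $\{\e^{-t\cL^\alpha}\}$. Since $\wt\psi(iQ/2) \ne 0$, up to rescaling $\psi$ we may assume $\wt\psi(iQ/2) = 1$. Using spherical Fourier analysis, Lemma~\ref{l: sphestimates}~\ref{en:sphestimates_sphexp}, and the evenness of $\wt\psi$ (entire by Paley--Wiener), for $r > 0$ we have
\[
\e^{-t\cL^\al}(\cL^\ga\psi)(r) = 2\int_\BR \wt\psi(\la)\,(\la^2+Q^2/4)^\ga\,\e^{-t(\la^2+Q^2/4)^\al}\,\Phi_\la(r)\,\bc(-\la)^{-1}\wrt\la.
\]
A Cauchy contour shift from $\BR$ to $\BR + iQ/2$ (justified by Lemma~\ref{l: sphestimates}~\ref{en:sphestimates_cholo} and~\ref{en:sphestimates_csymb}) combined with the expansion $\Phi_{\la+iQ/2}(r) = \e^{-Qr}(\e^{i\la r} + R_\la(r))$, where $|R_\la(r)| \leq C_\delta \e^{-\delta r}$ for any $\delta \in (0,1)$, yields the decomposition
\[
\tfrac{1}{2}\e^{Qr}\,\e^{-t\cL^\al}(\cL^\ga\psi)(r) = I(t,r) + J(t,r),
\]
with
\[
I(t,r) = \int_\BR \wt\psi(\la + iQ/2)\,(i\la)^\ga\,p(\la)\,\e^{-t(\la^2+i\la Q)^\al + i\la r}\wrt\la,
\]
where $p(\la) \defeq (Q - i\la)^\ga\,\bc(-\la - iQ/2)^{-1}$ lies in the symbol class $S^{\ga+(n-1)/2}$ (as in the proof of Proposition~\ref{p: heat}~\ref{en:heat_lower}), and $J$ encapsulates the $R_\la(r)$ contribution.

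The decisive new feature for $\al \in (0,1)$ is that $\Re[(\la^2+i\la Q)^\al] \geq \cos(\al\pi/2)\,Q^\al |\la|^\al$ for all $\la \in \BR$, so $\e^{-t(\la^2+i\la Q)^\al}$ furnishes genuine decay in $|\la|$, albeit of fractional order $\al$. Following the heat case, we cut off with a smooth even $\eta$ supported in $[-2,2]$ with $\eta \equiv 1$ on $[-1,1]$: the tail satisfies $|I_{1-\eta}(t,r)| \leq C\e^{-ct}$, while a Taylor expansion of $\wt\psi(\cdot + iQ/2) p(\cdot)$ at $0$ splits $I_\eta = p(0)\,I_\eta^0 + I_\eta^1$, where
\[
I_\eta^0(t,r) = \int_\BR \eta(\la)\,(i\la)^\ga\,\e^{-t(\la^2+i\la Q)^\al + i\la r}\wrt\la.
\]
The natural scaling is $\bar t(r) = y^\al r^\al$ for a positive constant $y$ to be determined. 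Substituting $\la = v/(yr)$, one checks that $\bar t(r)(\la^2+i\la Q)^\al \to (ivQ)^\al$ and $i\la r \to iv/y$ as $r \to \infty$; the uniform bound $|\e^{-\bar t(r)(\la^2+i\la Q)^\al}| \leq \e^{-\cos(\al\pi/2) Q^\al |v|^\al}$ (at $\la = v/(yr)$) together with polynomial control on $\wt\psi(\cdot + iQ/2) p(\cdot)$ allows an application of dominated convergence, producing
\[
\lim_{r\to\infty} r^{1+\ga}\,I_\eta^0(\bar t(r),r) = \frac{1}{y^{1+\ga}}\int_\BR (iv)^\ga\,\e^{-(ivQ)^\al}\,\e^{iv/y}\wrt v.
\]
Since $v \mapsto (iv)^\ga\e^{-(ivQ)^\al}$ is a nonzero integrable function on $\BR$, its Fourier transform is continuous and does not vanish identically, so we may pick $y > 0$ making the above limit nonzero; combined with $p(0) = Q^\ga\,\bc(-iQ/2)^{-1} \neq 0$, this yields $|I(\bar t(r),r)| \geq C r^{-(1+\ga)}$ for $r$ sufficiently large. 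The subleading terms $|I_\eta^1(\bar t(r),r)| \leq C r^{-(2+\ga)}$ and $|J(\bar t(r),r)| \leq C \e^{-\delta r}\,r^{-(1+\ga)}$ are both negligible by comparison.

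Multiplying by $\bar t(r)^c = y^{c\al}\,r^{c\al}$ and restoring the prefactor $\e^{-Qr}$ gives the pointwise lower bound~\eqref{f: lowerbd_poisson_max}; the nonintegrability assertion for $c \geq \ga/\al$ then follows from \eqref{f: misuraDR}, since $\int_{\bar r}^\infty r^{-(1+\ga-c\al)}\wrt r = \infty$ in that range. The main technical hurdle I anticipate is the management of the branch of $(\la^2+Q^2/4)^\ga$ when $\ga$ is noninteger, required to legitimise the contour shift; as in Proposition~\ref{p: heat}~\ref{en:heat_lower}, this is resolved by factoring $(\la^2+Q^2/4)^\ga = (Q/2+i\la)^\ga (Q/2-i\la)^\ga$ with the principal branch on each factor, so that after the shift these become $(i\la)^\ga$ and $(Q-i\la)^\ga$, both well-defined for $\la \in \BR$.
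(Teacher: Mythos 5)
Your proposal follows the same strategy as the paper's proof: the same spherical-transform representation and contour shift to $\BR + iQ/2$, the same decomposition into $I+J$ and $I_\eta = p(0)\,I_\eta^0 + I_\eta^1$, the same use of $\Re\bigl[(\la^2+i\la Q)^\al\bigr] \geq \cos(\al\pi/2)(Q|\la|)^\al$ to control the error terms, and the same rescaling $t \asymp r^\al$ with a free parameter in the main term. All of your error estimates check out, and your treatment of the branch of $(\la^2+Q^2/4)^\ga$ matches the paper's.

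There is, however, one genuine gap, and it sits at the decisive point of the argument. You need the limit
\[
\int_\BR (iv)^\ga\,\e^{-(iQv)^\al}\,\e^{iv/y}\wrt v \;=\; \hat F(-1/y), \qquad F(v)\defeq (iv)^\ga\,\e^{-(iQv)^\al},
\]
to be nonzero for some $y>0$; since $\bar t(r)=y^\al r^\al$ must be a positive time, only $y>0$ is admissible, i.e., you only have access to $\hat F$ on the half-line of \emph{negative} frequencies. Your justification --- $F$ is a nonzero integrable function, so $\hat F$ is continuous and does not vanish identically, hence is nonzero at some admissible point --- is a non sequitur: a nonzero integrable function can perfectly well have its Fourier transform supported in $[0,\infty)$, in which case no admissible $y$ would work. (In the heat case of Proposition~\ref{p: heat}~\ref{en:heat_lower} the analogous parameter $A$ ranges over all of $\BR$, because $\bar t(r)=(r-A\sqrt r)/Q$ is positive for large $r$ regardless of the sign of $A$; there ``does not vanish identically'' suffices. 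This is exactly the subtlety that changes when $\al<1$, and you have imported the heat-case reasoning without noticing the constraint.) The paper closes this gap by observing that $F$ extends holomorphically to the lower half-plane $\Omega_-$ with an $L^2$ bound uniform in the imaginary part, so that $F\in H^2(\Omega_-)$ and, by the Paley--Wiener theorem, $\hat F$ vanishes identically on $(0,\infty)$; since $\hat F\not\equiv 0$ and is continuous, it must then be nonzero somewhere on $(-\infty,0)$, which is precisely the accessible range. You need this (or an equivalent) argument to complete the proof.
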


\begin{proof}
We first observe that the nonintegrability of $\sup_{t\geq 1} t^c |\cL^\gamma (\e^{-t\cL^\al} \psi)|$ when $c \geq \gamma/\al$ follows from the pointwise estimate \eqref{f: lowerbd_poisson_max} and the integration formula \eqref{f: misuraDR}. Thus, we are reduced to proving the lower bound \eqref{f: lowerbd_poisson_max}.

The proof of the latter closely follows that of part \ref{en:heat_lower} of Proposition \ref{p: heat}. Indeed, much as in that proof, we may assume that $\wt \psi(iQ/2) = 1$ and write, for all $r>0$,
\[\begin{split}
\frac{1}{2} \cL^\gamma\big(\e^{-t\cL^\alpha} \psi \big) (r)
&= \ir  \wt \psi (\la) \, \big(\la^2+Q^2/4\big)^\gamma
             \, \e^{-t (\la^2+Q^2/4)^\alpha} \,  \Phi_{\la}({r})\,  \bc(-\lambda)^{-1} \wrt \la \\
&= \ir \wt \psi (\la+iQ/2) \, (i\la)^\gamma p(\la)
             \, \e^{-t (\la^2+iQ\la)^\alpha} \,  \Phi_{\la+iQ/2}({r})\,  \wrt \la
\end{split}\]
(the only difference being the exponent $\alpha$ in place of $1$); as before, we are using the branches of $w \mapsto w^\alpha$ and $w \mapsto w^\gamma$ on the complex plane slit along the half-line $(-\infty,0]$ that coincide with the arithmetic $\alpha$ and $\gamma$ powers on $(0,\infty)$.
Still following the proof of Proposition \ref{p: heat}~\ref{en:heat_lower}, we then proceed to split
\begin{align*}
\frac{1}{2} e^{Qr} \cL^\gamma\big(\e^{-t\cL^\alpha} \psi \big) (r) &= I(r,t) + J(r,t),\\
I(r,t) &= I_\eta(r,t) + I_{1-\eta}(r,t),\\
I_\eta(r,t) &= p(0) \, I_\eta^0(r,t) + I_{\eta}^1(r,t),
\end{align*}
where again $p(0) = Q^\gamma \, \bc(-iQ/2)^{-1} \neq 0$.

A substantial difference between the case $\alpha \in (0,1)$ considered here and the case $\alpha=1$ discussed in Proposition \ref{p: heat} is that, when $\alpha \in (0,1)$,
\[
\Re \left[(\la^2+i\la Q)^\al\right] = |\la|^\al (\la^2+Q^2)^{\al/2} \cos (\al \arctan(Q/\la)) \asymp |\la|^\al (1+|\la|)^\al
\]
for all $\lambda \in \BR$ (the implicit constants may depend on $\alpha$), and in particular
\begin{equation}\label{f: real part of the phase}
\bigmod{\e^{-t (\la^2+iQ\la)^\alpha}} \leq \e^{-\kappa t |\la|^\al}
\end{equation}
for some $\kappa>0$.
(In the case $\alpha=1$, we have $\bigmod{\e^{-t (\la^2+iQ\la)}} = \e^{- t |\la|^2}$ instead, with an exponent $2$ in place of $1$.)
Armed with the estimate \eqref{f: real part of the phase}, we can follow the arguments in the proof of Proposition \ref{p: heat}~\ref{en:heat_lower} to obtain that, for all $\delta \in (0,1)$,
\begin{equation}\label{f: poisson_errorterms}
\begin{aligned}
|J(r,t)| &\leq C_\delta \, t^{-(1+\ga)/\al} \e^{-\delta r},\\
|I_{1-\eta}(r,t)| &\leq C \, \e^{-\kappa t/2},\\
|I^1_{\eta}(r,t)| &\leq C \, t^{-(2+\ga)/\al}
\end{aligned}
\end{equation}
for all $r,t \geq 1$ (cf.\ \eqref{f: est_J_heat}, \eqref{f: est_Ine_heat} and \eqref{f: est_Ie1_heat}).

We are then left with estimating the main term
\[
I_\eta^0(r,t) = 
\ir \eta(\la)  \, (i\la)^\gamma  
             \, \e^{-t (\la^2+iQ\la)^\al} \,  \e^{i\lambda r}  \wrt \la \, ,
\]
where $\eta \in C_c^\infty(\BR)$ is even and supported in $[-2,2]$, with $0 \leq \eta \leq 1$ and $\eta|_{[-1,1]} \equiv 1$.
Let $\sigma > 0$ be a parameter to be fixed later, and notice that
\begin{equation}\label{f: Ieta0_convergence}
\begin{split}
I_\eta^0(r,(r/\si)^\al) &= 
\ir \eta(\la)  \, (i\la)^\gamma  
             \, \e^{-[r(\la^2+iQ\la)/\si]^\al} \,  \e^{i\lambda r}  \wrt \la \\
&=(\si/r)^{\ga+1} \ir \eta(\si v/r)  \, (i v)^\gamma  
             \, \e^{-(\si v^2/r+iQv)^\al} \,  \e^{i\si v}  \wrt v .
\end{split}
\end{equation}
By dominated convergence, and since $\eta(0)=1$,
one can show that, as $r \to +\infty$, the latter integral tends to
\begin{equation}\label{f: ft_int}
\ir (i v)^\gamma \, \e^{-(iQv)^\al} \,  \e^{i\si v}  \wrt v,
\end{equation}
that is, the value $\hat F(-\si)$ of the Fourier transform of $F : v \mapsto (i v)^\gamma \, \e^{-(iQv)^\al}$.

We now observe that, due to our choices of the branches of $w \mapsto w^\gamma$ and $w \mapsto w^\alpha$, $F$ admits a holomorphic extension to the lower half-plane $\Omega_- = \{ z \in \BC : \Im z < 0\}$; moreover, if $z = v+is$, $s<0$, then
\[
|F(z)| = |iv-s|^\ga \,\e^{-Q^\al \Re \left[(iv-s)^\al\right]} \leq |v|^\ga \,\e^{-\kappa Q^\al |v|^\al},
\]
and the latter expression is in $L^2(\BR)$ and independent of $s$. Consequently, $F$ is in the holomorphic Hardy space $H^2(\Omega_-)$ and, by the Paley--Wiener theorem, $\hat F|_{(0,\infty)} \equiv 0$. It follows that there must exist a $\si > 0$ such that $\hat F(-\si) \neq 0$, for otherwise $\hat F$ and $F$ would vanish identically.

Thus, with this choice of $\sigma$, the integral \eqref{f: ft_int} does not vanish, and therefore from \eqref{f: Ieta0_convergence} we deduce that there exists $r_0 \geq \max\{1,\si\}$ sufficiently large that
\[
|I_\eta^0(r,(r/\si)^\al)| \geq C \, r^{-(\ga+1)}.
\]
for all $r \geq r_0$. On the other hand, the previous estimates \eqref{f: poisson_errorterms} give
\[
|J(r,(r/\si)^\al)|,|I_{1-\eta}(r,(r/\si)^\al)|,|I^1_{\eta}(r,(r/\si)^\al)| \leq C \, r^{-(2+\ga)}.
\]
So we can conclude that, for $r \geq r_0$ sufficiently large, 
\[
\sup_{t \geq 1} t^c |\cL^\gamma(\e^{-t\cL^\al} \psi)(r)| \geq (r/\si)^{\al c} |\cL^\gamma(\e^{-(r/\si)^\al \cL^\al} \psi)(r)| \geq C \e^{-Qr} r^{c\al-\ga-1},
\]
as required.
\end{proof}

Thanks to the above estimate, we can now complement the inclusions of Corollary \ref{c: maximal heat manifold} with the following result, ruling out that the spaces $\gXga{S}$ and $H^1_{\cR}(S)$ can be characterised in terms of the weighted Poisson maximal function.

\begin{theorem}\label{t: comparison weighted poisson}
Let $\ga > 0$ and $c \in \BR$.
\begin{enumerate}[label=(\roman*)]
\item\label{en:cwp_Xnotin} $\gXga{S}$ is not included in $H^1_{\cP,2\ga}(S)$.
\item\label{en:cwp_X} $\gXga{S}$ and $H^1_{\cP,c}(S)$ are different spaces.
\item\label{en:cwp_R} $H^1_{\cR}(S)$ and $H^1_{\cP,c}(S)$ are different spaces.
\end{enumerate}
\end{theorem}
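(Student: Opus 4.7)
The plan is to establish part \ref{en:cwp_Xnotin} first, as this is the heart of the argument, and then derive parts \ref{en:cwp_X} and \ref{en:cwp_R} as soft consequences of part \ref{en:cwp_Xnotin} combined with Corollary \ref{c: maximal heat manifold} and Theorem \ref{t: char Riesz}.

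For part \ref{en:cwp_Xnotin}, I would pick any radial, nonnegative $\psi \in C^\infty_c(S)$ that is not identically zero. By Lemma \ref{l: sphestimates}~\ref{en:sphestimates_sphone} one then has $\wt\psi(iQ/2) = \int_S \psi \wrt\mu > 0$, while Proposition \ref{p: cinftyc}~\ref{en:cinftyc_cinftyc} ensures that $\cL^\ga \psi \in \gXga{S}$. I would then apply Proposition \ref{p: maximal Poisson H3} with parameters $\al = 1/2$ and $c = 2\ga$: the threshold condition $c \geq \ga/\al$ reduces to the equality $2\ga \geq 2\ga$, and the conclusion of the proposition gives
\[
\sup_{t\geq 1} t^{2\ga} \bigmod{\cP_t(\cL^\ga \psi)} \notin L^1(S).
\]
Since $\cP_*^{2\ga}(\cL^\ga \psi) \geq \sup_{t\geq 1} t^{2\ga} \bigmod{\cP_t(\cL^\ga \psi)}$, it follows that $\cL^\ga \psi$ belongs to $\gXga{S}$ but not to $H^1_{\cP,2\ga}(S)$, which proves part \ref{en:cwp_Xnotin}.

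For part \ref{en:cwp_X}, I would distinguish three cases. If $c < 2\ga$, then taking $\vep = 2\ga - c > 0$ in Corollary \ref{c: maximal heat manifold} directly yields the strict inclusion $\gXga{S} \subset H^1_{\cP,c}(S)$, so the two spaces differ. Symmetrically, if $c > 2\ga$, then taking $\vep = c - 2\ga > 0$ gives $H^1_{\cP,c}(S) \subset \gXga{S}$ properly, and again the spaces differ. The borderline case $c = 2\ga$ is precisely what is handled by part \ref{en:cwp_Xnotin}. Finally, part \ref{en:cwp_R} is the specialisation of part \ref{en:cwp_X} to $\ga = 1/2$, combined with the identification $H^1_{\cR}(S) = \gXum{S}$ established in Theorem \ref{t: char Riesz}.

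The main obstacle is the borderline case $c = 2\ga$ addressed by part \ref{en:cwp_Xnotin}: Corollary \ref{c: maximal heat manifold} on its own cannot separate $\gXga{S}$ from $H^1_{\cP,2\ga}(S)$, since both spaces are sandwiched between $H^1_{\cP,2\ga+\vep}(S)$ and $H^1_{\cP,2\ga-\vep}(S)$ for every $\vep > 0$. The fact that Proposition \ref{p: maximal Poisson H3} supplies a nontrivial pointwise lower bound on the weighted Poisson maximal function exactly at the critical exponent $c = \ga/\al$, rather than only strictly above it, is what ultimately breaks this symmetry in the Damek--Ricci setting and rules out the equality.
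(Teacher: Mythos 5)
Your proposal is correct and follows essentially the same route as the paper: part \ref{en:cwp_Xnotin} via the test function $\cL^\ga\psi$ with $\psi \in C^\infty_c(S)$ radial and nonnegative, combining Proposition \ref{p: cinftyc}~\ref{en:cinftyc_cinftyc} with Proposition \ref{p: maximal Poisson H3} at the critical exponent $c=2\ga$ (i.e., $\al=1/2$, $c=\ga/\al$), and then parts \ref{en:cwp_X} and \ref{en:cwp_R} by the same case analysis using Corollary \ref{c: maximal heat manifold} and Theorem \ref{t: char Riesz}. Your closing remark correctly identifies why the lower bound at the critical exponent is the essential input.
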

\begin{proof}
Part \ref{en:cwp_Xnotin} is proved in the same way as Theorem \ref{t: inclusion heat}~\ref{en:inclusion_heat_not}. Indeed, take a nontrivial, compactly supported, nonnegative smooth radial function $\psi$ on $S$. 
Then, by Proposition \ref{p: cinftyc}~\ref{en:cinftyc_cinftyc}, $\cL^\ga \psi \in \gXga{S}$. On the other hand, $\wt\psi(iQ/2) = \int_S \psi \wrt \mu \neq 0$.
Therefore, by Proposition \ref{p: maximal Poisson H3}, we conclude that $\cP_*^{2\ga} (\cL^\ga \psi) \notin L^1(S)$, that is, $\cL^\ga \psi \notin H^1_{\cP,2\ga}(S)$.

We now prove part \ref{en:cwp_X}. By Corollary \ref{c: maximal heat manifold}, we already know that $\gXga{S} \neq H^1_{\cP,c}(S)$ whenever $c \neq 2\gamma$. On the other hand, when $c=2\ga$, part \ref{en:cwp_Xnotin}
rules out that $\gXga{S}$ can coincide with $H^1_{\cP,2\ga}(S)$.

Finally, as $H^1_{\cR}(S) = \gXum{S}$ by Theorem~\ref{t: char Riesz}, part \ref{en:cwp_R} follows from part \ref{en:cwp_X}.
\end{proof}

Drawing on the theory developed above, we can finally conclude that the Riesz, Poisson and heat Hardy spaces on a Damek--Ricci space $S$ are pairwise different.

\begin{corollary} \label{c: comparison heat Poisson}
The following hold.
\begin{enumerate}[label=(\roman*)]
\item\label{en:comparison_heat_poisson_rp}
$\huR{S}$ is properly contained in $\huP{S}$.
\item\label{en:comparison_heat_poisson_hp}
$\huH{S}$ is properly contained in $\huP{S}$.
\item\label{en:comparison_heat_poisson_rh}
$\huR{S}$ is not contained in $\huH{S}$.
\end{enumerate}  
\end{corollary}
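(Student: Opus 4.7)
The three assertions are relatively short consequences of the inclusion/noninclusion theorems already established. My strategy is to reduce each of them, via the identification $\huR{S} = \gXum{S}$ provided by Theorem~\ref{t: char Riesz}, to statements about the scales $\{\gXga{S}\}$, $\{\huHc{S}\}$ and $\{\huPc{S}\}$. Throughout I use that both families $\{\huHc{S}: c \in \BR\}$ and $\{\huPc{S}: c \in \BR\}$ are decreasing in $c$, as is clear from the pointwise monotonicity of the weighted maximal operators in the parameter $c$.

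For (i), the inclusion $\huR{S} \subseteq \huP{S}$ and its strictness follow immediately from Corollary~\ref{c: maximal heat manifold}, applied to $M = S$.

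For (ii), the inclusion $\huH{S} \subseteq \huP{S}$ is the special case $c=0$ of Theorem~\ref{t: maximal heat manifold}~\ref{en:maxhpman_heatpoisson}. To prove strictness, I exhibit a function in $\huP{S}$ that does not belong to $\huH{S}$. The natural candidate is provided by Theorem~\ref{t: inclusion heat}~\ref{en:inclusion_heat_not} applied with $c=0$, which yields $\fX^1(S) \not\subseteq \huH{S}$; concretely, $\cL \psi \in \fX^1(S) \setminus \huH{S}$ for any suitable radial $\psi \in C^\infty_c(S)$. On the other hand, Corollary~\ref{c: maximal heat manifold} with $\gamma = 1$ and $\vep \in (0,2)$ gives $\fX^1(S) \subseteq H^1_{\cP,2-\vep}(S) \subseteq \huP{S}$, the last inclusion using monotonicity of $\{\huPc{S}\}$ in $c$. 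Combining these, any $f \in \fX^1(S) \setminus \huH{S}$ witnesses $\huH{S} \subsetneq \huP{S}$.

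For (iii), I use Theorem~\ref{t: inclusion heat}~\ref{en:inclusion_heat_not} with $c = -1/4$, for which $2c+1 = 1/2 > 0$: this yields $\fX^{1/2}(S) \not\subseteq H^1_{\cH,-1/4}(S)$. By the monotonicity of $\{\huHc{S}\}$, $\huH{S} = H^1_{\cH,0}(S) \subseteq H^1_{\cH,-1/4}(S)$, so any element of $\fX^{1/2}(S)$ outside $H^1_{\cH,-1/4}(S)$ is \emph{a fortiori} outside $\huH{S}$. Invoking $\huR{S} = \gXum{S}$ from Theorem~\ref{t: char Riesz} gives $\huR{S} \not\subseteq \huH{S}$.

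Among the three steps, the only one with any subtlety is the choice of parameters in (iii): the naive application of Theorem~\ref{t: inclusion heat}~\ref{en:inclusion_heat_not} with $c=0$ only disproves $\fX^1(S) \subseteq \huH{S}$ and says nothing about $\fX^{1/2}(S)$, so one must move to $c=-1/4$ and then use the monotonicity of the heat scale to swap $H^1_{\cH,-1/4}(S)$ for the larger ambient space and transfer the noninclusion back to $\huH{S}$. All of the genuine analytic work, in particular the sharp heat kernel estimates on Damek--Ricci spaces, has been carried out in Propositions~\ref{p: heat} and~\ref{p: maximal Poisson H3} and distilled into Theorems~\ref{t: maximal heat manifold} and~\ref{t: inclusion heat}, so no further estimates are needed here.
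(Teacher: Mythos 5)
Your proof is correct and follows essentially the same route as the paper: parts (i) and (ii) are handled identically (via Corollary~\ref{c: maximal heat manifold} and Theorem~\ref{t: inclusion heat}~\ref{en:inclusion_heat_not} with $c=0$), and part (iii) rests on the same noninclusion theorem, the only cosmetic difference being that you apply it with $c=-1/4$ to $\fX^{1/2}(S)=\huR{S}$ directly and use monotonicity of the scale $\{\huHc{S}\}$, whereas the paper applies it with $c=0$ to $\fX^{1}(S)$ and uses the proper inclusion $\fX^{1}(S)\subset\fX^{1/2}(S)$. Both variants are valid and of equal length.
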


\begin{proof}
The proper containment in part \ref{en:comparison_heat_poisson_rp} is among those proved in Corollary \ref{c: maximal heat manifold} for any manifold of class $\cM$.

We now prove part \ref{en:comparison_heat_poisson_hp}.  
Observe that the containment is the case $c=0$ of Theorem \ref{t: maximal heat manifold}~\ref{en:maxhpman_heatpoisson}.
Now $\gXu{S}\subset \huP{S}$, by 
Corollary \ref{c: maximal heat manifold},
and 
$\gXu{S} \not\subseteq \huH{S}$, by Theorem~\ref{t: inclusion heat}~\ref{en:inclusion_heat_not}.  Therefore 
$\huH{S}$ cannot possibly coincide with $\huP{S}$.  This proves part \ref{en:comparison_heat_poisson_hp}.

Finally we prove part \ref{en:comparison_heat_poisson_rh}.  We already know that $\gXu{S} \subset  \gXum{S}=\huR{S}$ by Theorem~\ref{t: char Riesz},
and that $\gXu{S} \not\subseteq\huH{S}$.  Thus, $\huR{S}$ and $\huH{S}$ 
cannot possibly agree.  
\end{proof}

\begin{remark}
Entirely analogous results to those in Theorem \ref{t: maximal heat manifold}, Corollary \ref{c: maximal heat manifold} and Theorem \ref{t: comparison weighted poisson} hold when the weighted Poisson--Hardy spaces $\huPc{M}$ are replaced by the spaces $H^1_{\cP^\al,c}(M)$ similarly defined in terms of the subordinated semigroup $\{ \e^{-t \cL^\al} \}$ for any $\alpha \in (0,1)$; the case of the Poisson semigroup corresponds to the choice $\alpha=1/2$. For example, for any manifold $M$ in the class $\cM$, one can prove the proper inclusions
\[
H_{\cP^\al,\ga/\al+\vep}^1(M) \subset \gXga{M} \subset H_{\cP^\al,\ga/\al-\vep}^1(M)
\]
for all $\ga,\vep>0$, and also show that $H_{\cP^\al,\ga/\al}^1(S) \neq \gXga{S}$ when $S$ is a Damek--Ricci space. 
Indeed, the analogue of Proposition \ref{p: H1Hloc} holds for an arbitrary subordinated semigroup (see \cite[Corollary 5.4]{MaMV2}), while the estimates in Lemma \ref{l: est Lpt} and Proposition \ref{p: maximal Poisson H3} apply to any $\alpha \in (0,1)$. We leave the remaining easy details to the interested reader.
\end{remark}

\end{document}